\pdfoutput=1
\documentclass[11pt,a4paper,reqno]{amsart}
\usepackage[foot]{amsaddr}
\usepackage[utf8]{inputenc}
\usepackage[left=1in,right=1in,top=1in,bottom=1in]{geometry}

\usepackage{bbm}
\usepackage{amssymb}
\usepackage{amsmath}
\usepackage{mathtools}
\usepackage{mathdots}
\usepackage{array,multirow,graphicx}
\usepackage{quiver}

\usepackage{amsthm}
\usepackage{enumitem}
\usepackage[hidelinks,unicode,naturalnames,hypertexnames=false]{hyperref}
\usepackage{thmtools}
\usepackage[english,noabbrev,sort,nameinlink]{cleveref}


\newcounter{mycounter}[section]

\theoremstyle{plain}
\newtheorem{theorem}[mycounter]{Theorem}
\newtheorem{proposition}[mycounter]{Proposition}
\newtheorem{lemma}[mycounter]{Lemma}
\newtheorem{corollary}[mycounter]{Corollary}

\theoremstyle{definition}
\newtheorem{definition}[mycounter]{Definition}

\theoremstyle{remark}
\newtheorem{remark}[mycounter]{Remark}

\numberwithin{equation}{section}
\crefname{subsection}{subsection}{subsections}

\setlist[itemize]{noitemsep, topsep=0pt, leftmargin=1cm}

\newlist{facts}{enumerate}{1}
\setlist[facts, 1]{label=\normalfont\bfseries(\Roman*)}
\newlist{enumalpha}{enumerate}{1}
\setlist[enumalpha, 1]{label=(\alph*)}
\newlist{enumroman}{enumerate}{1}
\setlist[enumroman, 1]{label=(\roman*)}

\newcommand{\leftl}{\mathopen{}\mathclose\bgroup\left}
\newcommand{\rightr}{\aftergroup\egroup\right}
\renewcommand{\bar}{\overline}
\renewcommand{\tilde}{\widetilde}
\renewcommand{\hat}{\widehat}

\newcommand{\bigslant}[2]{{\raisebox{.2em}{$#1$}\left/\raisebox{-.2em}{$#2$}\right.}}
\newcommand{\prim}{{}\!\!\!{\textnormal{\LARGE\textquoteright}}}

\newcommand{\Z}{\mathbb Z}
\newcommand{\Q}{\mathbb Q}
\newcommand{\N}{\mathbb N}
\newcommand{\F}{\mathbb F}
\newcommand{\R}{\mathbb R}
\newcommand{\cD}{\mathcal D}
\newcommand{\cI}{\mathcal I}
\newcommand{\cO}{\mathcal O}
\newcommand{\cP}{\mathcal P}
\newcommand{\fp}{\mathfrak p}
\newcommand{\fq}{\mathfrak q}

\newcommand{\perf}{\textnormal{perf}}
\newcommand{\ab}{\textnormal{ab}}
\newcommand{\ur}{\textnormal{ur}}
\newcommand{\sep}{\textnormal{sep}}
\newcommand{\alg}{\textnormal{alg}}

\DeclareMathOperator{\Gal}{Gal}
\DeclareMathOperator{\Hom}{Hom}

\DeclareMathOperator{\height}{ht}
\DeclareMathOperator{\lastjump}{lastjump}
\DeclareMathOperator{\cond}{cond}
\DeclareMathOperator{\minlift}{minlift}

\newcommand{\notp}{\N\setminus p\N}
\newcommand{\Onotp}{\{0\}\cup\notp}
\newcommand{\nottwo}{\N\setminus 2\N}
\newcommand{\Onottwo}{\{0\}\cup\nottwo}
\newcommand{\card}[1]{\leftl\lvert#1\rightr\rvert}
\newcommand{\suchthat}[2]{\left\{\begin{matrix}#1\end{matrix}\ \middle\vert\ \begin{matrix}#2\end{matrix}\right\}}
\newcommand{\cardsuchthat}[2]{\card{\suchthat{#1}{#2}}}
\newcommand{\simto}{\overset\sim\to}
\newcommand{\ppar}[1]{(\!(#1)\!)}
\newcommand{\mat}[1]{\left(\begin{smallmatrix}#1\end{smallmatrix}\right)}
\renewcommand{\d}{\mathrm{d}}

\newcommand{\ceil}[1]{\leftl\lceil#1\rightr\rceil}

\newcommand{\restfp}{|_{\Gamma_{K_\fp}}}

\newcommand{\customref}[2]{\hyperref[#2]{#1~\ref*{#2}}}
\newcommand{\iref}[2]{\customref{\Cref*{#1}}{#2}}

\title{Lifts of unramified twists and local--global principles}
\subjclass{11S15, 11R45, 11S25}

\author{Fabian Gundlach}
\email{fabian.gundlach@uni-paderborn.de}
\author{Béranger Seguin}
\email{math@beranger-seguin.fr}
\address{Universität Paderborn, Fakultät EIM, Institut für Mathematik, Warburger Str.~100, 33098 Paderborn, Germany.}

\begin{document}

\begin{abstract}
  We prove that two-step nilpotent $p$-extensions of rational global function fields of characteristic~$p$ satisfy a quantitative local--global principle when they are counted according to their largest upper ramification break (``last jump'').
  We had previously shown this only for $p\neq2$.
  Compared to our previous proof, this proof is also more self-contained, and may apply to heights other than the last jump.
  As an application, we describe the distribution of last jumps of $D_4$-extensions of rational global function fields of characteristic~$2$.
  We also exhibit a counterexample to the analogous local--global principle when counting by discriminants.
\end{abstract}

\maketitle

{
  \setcounter{tocdepth}{1}
  \hypersetup{linkcolor=black}
  \tableofcontents{}
}

\section{Introduction}
\label{sn:introduction}

For the whole article, we fix a prime number~$p$.
If~$K$ is a field, $\Gamma_K := \Gal(K^\sep|K)$ is its absolute Galois group equipped with the Krull topology.
For a finite group~$G$ with the discrete topology, $\Hom(\Gamma_K, G)$ is the set of continuous homomorphisms $\Gamma_K \to G$.
We denote the higher inertia groups of a local field~$K$ by~$\Gamma_K^v$ (in the upper numbering, for any $v \in \R_{\geq-1}$) and, for any $\rho\in\Hom(\Gamma_K, G)$, we define
\[
  \lastjump\rho :=
  \inf\suchthat{
      v \in \Q_{\geq 0}
  }{
      \rho(\Gamma_K^v) = 1
  }
  \in \Q_{\geq0}.
\]
Finally, if~$K$ is a global field, the set of its places is denoted by~$\cP_K$, and for each $\fp \in \cP_K$, we let~$K_\fp$ be the completion of~$K$ at~$\fp$ and we let $\cO_\fp := \cO_{K_\fp}$.

\medskip

One of the main themes of class field theory is the interplay between the local theory and the global theory, leading to a local--global principle for abelian extensions (cf.~\Cref{lem:cft}).
In this article, we prove an analogous quantitative local--global principle for certain non-abelian extensions when they are ordered by their last jump:

\begin{theorem}
  \label{thm:intro-local-global}
  Let~$G$ be a finite $p$-group of nilpotency class~$\leq 2$ and let $K := \F_q(T)$ be a rational global function field of characteristic~$p$.
  Then, for any $(v_\fp) \in \prod_{\fp \in \cP_K} \Q_{\geq0}$ such that~$v_\fp = 0$ for almost all~$\fp \in \cP_K$, we have
	\begin{align*}
		&\frac{1}{\card G}
    \cardsuchthat{
			\rho \in \Hom(\Gamma_K, G)
    }{
			\forall \fp,\ \lastjump \bigl( \rho\restfp \bigr) = v_\fp
		} \\
		={}&
		\prod_{\fp \in \cP_K}
  		\frac{1}{\card G}
      \cardsuchthat{
        \rho_\fp \in \Hom(\Gamma_{K_\fp}, G)
      }{
			  \lastjump \rho_\fp = v_\fp
      },
	\end{align*}
  where the left-hand side and all the factors of the right-hand side are nonnegative integers.
  (For all primes $\fp$ with $v_\fp = 0$,
  the factor is~$1$, making the infinite product well-defined.)
\end{theorem}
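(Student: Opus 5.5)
We will reduce to the abelian case (\Cref{lem:cft}) by induction on a central extension. Since $G$ has class $\leq 2$, choose a central subgroup $Z\leq G$ with $G/Z$ abelian, e.g.\ $Z=[G,G]$ or $Z = Z(G)$; we may also refine and take $Z\cong\Z/p$ central, inducting on $\card G$. The abelian case is the form of class field theory recorded in \Cref{lem:cft}. For $K=\F_q(T)$, which has class number one and $\F_q^\times$ as unit group, the idelic quotient $\Gamma_K^{\ab,(p)}$ is the restricted product of the local groups $\Gamma_{K_\fp}^{\ab,(p)}$. Concretely, a global $p$-power character amounts to an independent choice of local characters. Here the wild inertia at each place is free pro-$p$ and there is no unramified $p$-part to spare except a single global $\Z_p$ factor. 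That factor is what produces the normalizing factor $1/\card G$.

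\textbf{Fibers over a fixed quotient.} Fix $\bar\rho\in\Hom(\Gamma_K,G/Z)$. Since $\mathrm{cd}_p\Gamma_K\le1$ in characteristic $p$, $\bar\rho$ lifts to $G$, and the lifts form a torsor under $\Hom(\Gamma_K,Z)$ via twisting $\rho\mapsto\rho\cdot\chi$. The same holds locally. The key claim is a ``minimal lift'' statement. For each place $\fp$ and each local $\bar\rho_\fp$, the function $\chi_\fp\mapsto\lastjump(\rho_\fp\chi_\fp)$ behaves as follows. It equals $\max(\lastjump\rho_\fp^{\min},\lastjump\chi_\fp)$ for a suitable canonical choice of lift $\rho^{\min}_\fp$, and it is constant on cosets of unramified twists. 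One also needs that these minimal lifts can be chosen globally coherently: there should be a global lift $\rho$ whose localizations are minimal at every $\fp$, up to unramified twist. This is where $\F_q(T)$ matters. A global lift can be modified by a global $Z$-character with arbitrarily prescribed local wild components, by the abelian local--global principle. Hence we can correct each localization to a minimal one, at finitely many places simultaneously.

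\textbf{Counting.} Given this, we count the lifts $\rho$ of $\bar\rho$ with prescribed local last jumps. This count is the number of global $\chi\in\Hom(\Gamma_K,Z)$ with prescribed local data $\lastjump(\rho_\fp\chi_\fp)=v_\fp$. By the abelian principle, it equals the product of the local counts divided by the matching $\card Z$ normalization. Summing over $\bar\rho$ and applying the induction hypothesis for $G/Z$, we must compare local counts fiberwise. That comparison is exactly what the fiberwise product structure provides. Still, the local last jump of $\rho_\fp$ depends on both $\bar\rho_\fp$ and the twist, so the bookkeeping must be done jointly: we sum over local pairs $(\bar\rho_\fp,\chi_\fp)$, which requires the unramified twist invariance to match the global unramified $\Z_p$ factor.

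\textbf{Main obstacle.} I expect the hard step to be the local statement: the set of lifts of $\bar\rho_\fp$ with a given last jump must be a union of cosets of a subgroup that is controlled by the ramification filtration on $\Hom(\Gamma_{K_\fp},Z)$. Equivalently, there must exist a lift minimizing the last jump, with $\lastjump(\rho\chi)=\max(\lastjump\rho,\lastjump\chi)$ whenever $\lastjump\chi\neq\lastjump\rho$, and with an exact count when the two are equal. I would first try to prove this via Artin--Schreier--Witt descriptions of $\Hom(\Gamma_{K_\fp},Z)$, using that the upper filtration on $\Gamma_{K_\fp}^{\ab}$ is well understood and that $\rho(\Gamma^v)$ behaves well under twisting by central characters. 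The case $p=2$ is where naive ``upper jumps of $\rho\chi$ are the max'' arguments fail, so special care is needed there.
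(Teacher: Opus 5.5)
\textbf{There is a genuine gap.} Your global architecture matches the paper's \Cref{lem:fiberwise-local-global} and \Cref{thm:abstract-locglob}: fix $\bar\rho\in\Hom(\Gamma_K,G/Z)$, get a fiberwise product formula from the abelian principle for $Z$, then sum over $\bar\rho$ using \Cref{lem:cft} for the abelian group $G/Z$. But the step that carries the theorem is missing, and the step you call the main obstacle is not one.

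A lift minimizing the last jump exists by finiteness. The formula $\lastjump(\chi\cdot\rho)=\max(\lastjump\chi,\lastjump\rho)$ for a minimal lift $\rho$ follows in two lines from the ultrametric inequality \Cref{eqn:ultineq} and minimality (\Cref{lem:prop-min-lift}), for every $p$ including $p=2$. Invariance of $\chi_\fp\mapsto\lastjump(\rho_\fp\chi_\fp)$ under unramified $\chi_\fp$ is trivial, because last jumps only see inertia.

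What the summation over $\bar\rho$ needs is invariance in the other variable. \Cref{lem:cft} only identifies global and local \emph{inertial types} of $G/Z$-homomorphisms. Moreover, the unramified parts of all the $\bar\rho\restfp$ are governed by one global Frobenius image (\Cref{inclusion-frobenius}). So $\sum_{\bar\rho}\prod_\fp$ of the local fiber counts factors only if the number of lifts $\rho_\fp$ of $\bar\rho_\fp$ with $\lastjump\rho_\fp=v_\fp$ is unchanged when $\bar\rho_\fp$ is replaced by $\bar\rho_\fp+\bar\delta$ with $\bar\delta$ unramified. By the minimal-lift description, this says that $\minlift\bar\rho_\fp$ depends only on the inertial type of $\bar\rho_\fp$. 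That is \Cref{thm:minembed-inertial}, and it is not formal: the analogue fails for the discriminant exponent, where the local--global principle fails (\Cref{prop:counterexample-discr}). Your proposed tools (ASW for $\Hom(\Gamma_{K_\fp},Z)$, twisting by central characters) only move inside one fiber. They cannot compare the fibers over $\bar\rho_\fp$ and $\bar\rho_\fp+\bar\delta$.

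\textbf{The missing idea} is a non-abelian version of unramified twisting. Take a minimal lift $\rho_1$ of $\bar\rho_1$ and an unramified lift $\delta$ of $\bar\delta$. The product $\delta\cdot\rho_1$ is generally not a homomorphism. For a correction $\varepsilon\colon\Gamma_K\to Z$, the map $\varepsilon\cdot\delta\cdot\rho_1$ is a homomorphism if and only if $\partial\varepsilon(\tau_1,\tau_2)=[\bar\delta(\tau_2),\bar\rho_1(\tau_1)]$ (\Cref{lem:epsilon-minembed}). Vanishing of $H^2(\Gamma_K,Z)$ gives some such $\varepsilon$, but with no control on its last jump. You need one with $\lastjump\varepsilon\le\lastjump\bar\rho_1$.

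The paper builds it explicitly. It extends the commutator pairing bilinearly to the ASW modules and sets $\varepsilon(\tau)=\tau(g_\varepsilon)-g_\varepsilon+[\bar\delta(\tau),g_{\bar\rho}]$, where $\wp(g_\varepsilon)=[m_{\bar\rho},g_{\bar\delta}]$. On inertia, this is the ASW character of $[m_{\bar\rho},g_{\bar\delta}]$ over an unramified extension. Its coefficients are killed by the same powers of $p$ as those of $m_{\bar\rho}$, so \Cref{prop:lj-asw} gives the bound (\Cref{lem:explicit-epsilon}).

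\textbf{The $Z\simeq\Z/p$ variant also fails as stated.} Inducting on $\card G$ breaks down when $G/Z$ is non-abelian. The fiber counts depend on $\minlift\bar\rho_\fp$, not on $\lastjump\bar\rho_\fp$, so an inductive local--global statement for last jumps of $G/Z$-extensions is not what the summation needs. You need $G/Z$ abelian, so that \Cref{lem:cft} applies at the level of inertial types.
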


For non-abelian groups~$G$, we are not aware of a bijection directly underlying this quantitative local-global principle.
(For abelian~$G$, see \Cref{lem:cft}.)

This local-global principle is not a general fact about inertial heights; for instance, it can fail when one replaces the last jump by the degree of the discriminant divisor as we observe in \Cref{prop:counterexample-discr}.
However, we show in \Cref{thm:single-prime} that it does hold (for any finite $p$-group and any inertial height) for extensions unramified outside a single place of degree not divisible by~$p$.
Moreover, it is likely that in general situations an approximate statistical version of the quantitative local--global principle holds when averaging over many tuples~$(v_\fp)$.

A consequence of \Cref{thm:intro-local-global} is that, when~$G$ is a finite $p$-group of nilpotency class~$\leq 2$, the problem of describing the asymptotic distribution of last jumps $\sum_\fp \deg\fp\cdot\lastjump(\rho|_{\Gamma_{K_\fp}})$ of $G$-extensions of the global field~$\F_q(T)$ reduces to the study of the distribution of last jumps of $G$-extensions of the local function fields~$\F_q\ppar{T}$ using an analytic lemma~\cite[Lemma~5.4]{wildcount}.
As an application, we count $D_4$-extensions of~$\F_q(T)$ in characteristic~$2$ (a case that we could not treat in \cite{wildcount}) using the results of \cite{imai-wild-ramification-groups}:

\begin{theorem}[\Cref{thm:count-global-D4}]
  Let $K := \F_q(T)$ be a rational global function field of characteristic~$2$.
  There is a real number~$C > 0$ such that for integers~$X\to\infty$, we have
  \[
    \cardsuchthat{
      \rho \in \Hom(\Gamma_K, D_4)
    }{
      \sum_{\fp \in \cP_K}
        \deg \fp \cdot \lastjump\leftl(\rho\restfp\rightr)
      =
      X
    }
    =
    C q^{3 X} X  + o(q^{3X} X).
  \]
\end{theorem}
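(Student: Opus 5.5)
The plan is to reduce the global count to a local one using \Cref{thm:intro-local-global}, and then feed the local counts into the analytic lemma \cite[Lemma~5.4]{wildcount}.

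\textbf{Step 1: factor the generating function.} For $G = D_4$ (a $2$-group of nilpotency class~$2$), \Cref{thm:intro-local-global} shows that the count $N(X)$ on the left-hand side equals $\card G$ times the sum, over tuples $(v_\fp)$ with $\sum_\fp \deg\fp\cdot v_\fp = X$, of the products of the normalized local counts. So the Dirichlet-type series $\sum_X N(X)\, Z^X$ is, up to the factor $\card{D_4} = 8$, the Euler product
\[
  \prod_{\fp\in\cP_K} F_{\deg\fp}\bigl(Z^{\deg\fp}\bigr),
  \qquad
  F_f(Z) := \frac18 \sum_{v} \cardsuchthat{\rho\in\Hom(\Gamma_{\F_{q^f}\ppar{T}}, D_4)}{\lastjump\rho = v} Z^{v}.
\]
Here I use that the completion $K_\fp$ is isomorphic to $\F_{q^{\deg\fp}}\ppar{T}$. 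Last jumps may be non-integral rationals, so I should verify that only $v$ with $v\deg\fp\in\Z$ contribute once summed. Otherwise I will work with a variable $Z^{1/e}$ for a bounded denominator~$e$; for $D_4$ the upper breaks have denominators dividing~$2$.

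\textbf{Step 2: the local input.} Using \cite{imai-wild-ramification-groups}, I will compute or bound the local counts $\cardsuchthat{\rho_\fp}{\lastjump\rho_\fp=v}$ for $K_\fp\simeq\F_Q\ppar{T}$ in characteristic~$2$. The goal is to show that the local factor has the shape $F_f(Z) = 1 + c\,(Q^{3}Z)^{v_0}+\dots$, with leading exponential growth $Q^{3v}$ in~$v$. Roughly, three independent Artin--Schreier-type parameters of conductor~$v$ each contribute about $Q^{v}$.

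The key point is to identify which $v$ (equivalently which $\fp$ and which jump) give the dominant contribution. The shape $Cq^{3X}X$ predicts a pole of order exactly~$2$ at $Z = q^{-3}$. Concretely, the series $\prod_\fp F_{\deg \fp}$ should compare to $\zeta_K(q^{3}Z)^{2}$ times a factor that is holomorphic slightly beyond $\card Z = q^{-3}$. I expect this to come from the local counts at the minimal jump: summed over places $\fp$ with $\deg\fp\cdot v$ minimal, they should contribute $2\,(q^3Z)^{\deg\fp}$-type terms.

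\textbf{Step 3: apply the analytic lemma.} Once the local factors are controlled, \cite[Lemma~5.4]{wildcount} converts the Euler product structure (pole of order~$2$ at $q^{-3}$, with the other factors bounded) into $N(X) = Cq^{3X}X + o(q^{3X}X)$, with $C>0$ because the leading coefficient is a convergent positive Euler product. Since $X$ ranges over integers, I also need to rule out other poles on the circle $\card Z = q^{-3}$ of the same order, which would create oscillation. This should follow from the leading local term involving all degrees.

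The main obstacle is Step~2. Extracting from Imai's description precise enough local counts of $D_4$-extensions of $\F_Q\ppar{T}$ by last jump in characteristic~$2$ is the hard part. Specifically, I need to confirm that the multiplicity of the dominant term yields a double pole rather than a simple or triple one, and that the remaining terms are uniformly small in~$Q$ so the Euler product converges beyond the pole. I would first write out the local generating function for small jumps explicitly and compare it with $\zeta$-powers.
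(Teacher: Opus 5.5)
Your global architecture matches the paper's: \Cref{thm:intro-local-global} turns the count into $8\sum_{(v_\fp)}\prod_\fp a_{\fp,v_\fp}$. Here $a_{\fp,v}:=\frac18\cardsuchthat{\rho\in\Hom(\Gamma_{K_\fp},D_4)}{\lastjump\rho=v}$, the sum runs over tuples with $\sum_\fp\deg\fp\cdot v_\fp=X$, and \cite[Lemma~5.4]{wildcount} finishes. But Step~2 carries all the content, and it is not merely left open: the shape you predict for it is wrong and incompatible with the statement. Write $Q=\card{\kappa_\fp}$. The true local input is $a_{\fp,0}=1$, $a_{\fp,1}=2Q^2+O(Q)$ and $a_{\fp,v}=O(vQ^{v+1})$. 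The exponent $3=(2+1)/1$ combines the $Q^2$ at jump~$1$ with the roughly $q^d/d$ primes of degree~$d$, and the double pole comes from the coefficient~$2$. With $\zeta_K(Z)=\prod_\fp(1-Z^{\deg\fp})^{-1}$, the comparison function is $\zeta_K(q^2Z)^2$, not $\zeta_K(q^3Z)^2$. A leading local term $c\,(Q^3Z)^{v_0}$ as you propose would put the dominant singularity at $\card Z=q^{-3-1/v_0}$, giving growth strictly faster than $q^{3X}$. Your heuristic of three independent Artin--Schreier parameters $(a,b,c)$ tacitly assumes that every $\bar\rho\in\Hom(\Gamma_{K_\fp},\F_2^2)$ has a lift with the same last jump as $\bar\rho$. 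This is false, and its failure is exactly what cuts the naive $Q^3$ at $v=1$ down to $2Q^2$.

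The missing idea is the minimal lift height. By \Cref{thm:minembed-inertial} you may assume the constant terms $a_0,c_0$ of the Artin--Schreier parameters of $\bar\rho$ vanish. A case analysis on $\bar\rho(\Gamma_{K_\fp}^0)$ then gives $\minlift\bar\rho=w(a)+w(c)$ (\Cref{prop:minembed-D4}):
\begin{itemize}
\item over a coordinate line, use a homomorphic section of $\pi$;
\item over the diagonal, use the $\Z/4\Z$-lift, where $\minlift\bar\rho=2\lastjump\bar\rho$;
\item when $\bar\rho$ is surjective on inertia, use Imai's formula with $b=0$, which is minimal by \iref{lem:prop-min-lift}{not-integral-is-minimal} because $w(a)+w(c)$ is even.
\end{itemize}
Then \iref{lem:prop-min-lift}{lj-twists-min} shows that the lifts of $\bar\rho$ have last jump $\max(\lastjump\delta,\minlift\bar\rho)$ as $\delta$ runs over central twists. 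This yields \Cref{lem:d4-from-f22} and, combined with \Cref{cor:local-distrib-D4}, the estimates above. At $v=1$ only the $2(Q-1)$ inertial types with exactly one of $a,c$ ramified survive; all others with jump~$1$, diagonal or not, have $\minlift\bar\rho=2$. That is where the coefficient~$2$, and hence the factor~$X$, comes from. Imai's parametrization alone only covers $\bar\rho$ surjective on inertia, so counting triples $(a,b,c)$ directly would not suffice without this twist structure. This also settles your integrality hedge: all last jumps of $D_4$-extensions are integers. \cite[Lemma~5.4]{wildcount} then applies with $S=\{1\}$, $A=3$, $B=2$, $M=1$. The $1$-periodic constant it produces is constant on integer $X$, so no separate analysis of other poles on $\card Z=q^{-3}$ is needed.
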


\subsection{Relation with previous results.}

When~$p = 2$ and~$G$ has nilpotency class~$2$, \Cref{thm:intro-local-global} is a new result.

When~$G$ is abelian (in which case the last jump is closely related to the conductor), \Cref{thm:intro-local-global} is a straightforward consequence of class field theory (see \Cref{lem:cft}), and in this case it is not specific to the last jump (it holds for any inertial height, e.g., for discriminants).

When~$p \geq 3$, a proof of \Cref{thm:intro-local-global} was already given in \cite[Theorem~1.1]{wildcount}, relying on the Lazard correspondence between $p$-groups of nilpotency class less than~$p$ (leading to the exclusion of the prime~$2$) and finite Lie $\Z_p$-algebras, and on the description of the ramification filtration from~\cite{abrashkin-ramification-filtration-3}.
By contrast, this article deals directly with continuous homomorphisms~$\Gamma_K \to G$ and uses only classical results from class field theory and Artin--Schreier--Witt theory.

\subsection{Strategy of proof}

The assumption that~$G$ is a finite $p$-group of nilpotency class~$\leq2$ is equivalent to the existence of a short exact sequence
\[
  1 \longrightarrow N \longrightarrow G \stackrel\pi\longrightarrow Q \longrightarrow 1
\]
for two finite abelian $p$-groups $N \subseteq Z(G)$ and~$Q$.
Any continuous homomorphism $\rho \colon \Gamma_K \to G$ gives rise to a continuous homomorphism $\bar\rho = \pi\circ\rho \in \Hom(\Gamma_K, Q)$.
Conversely, in this case,
every $\bar\rho \in \Hom(\Gamma_K, Q)$ admits a \emph{lift} $\rho \in \Hom(\Gamma_K, G)$,
and the other lifts of~$\bar\rho$ are exactly the \emph{twists}~$\delta\cdot\rho$ (pointwise products) of~$\rho$ by homomorphisms $\delta \in \Hom(\Gamma_K, N)$ (see \Cref{lem:embpb-solvable}):
\[\begin{tikzcd}
  & & \Gamma_K\ar{dr}{\bar\rho}\ar[swap,shift right=3pt]{d}{\rho}\ar{d}\ar[shift left=3pt]{d}\ar[black!40,swap,shift right=3pt]{dl}{\delta}\ar[black!40]{dl}\ar[black!40,shift left=3pt]{dl} \\
  1\rar & N\rar & G\rar[swap]{\pi} & Q\rar & 1
\end{tikzcd}\]

Since~$Q$ and~$N$ are abelian, $\Hom(\Gamma_K, Q)$ and~$\Hom(\Gamma_K, N)$ can be understood using class field theory.
In fact, the assumption that~$K$ is a rational function field of characteristic~$p$ implies a local--global principle for homomorphisms modulo unramified homomorphisms (\Cref{lem:cft}):
\begin{equation}
  \label{eq:locglob-for-Q}
  \bigslant
    {\Hom(\Gamma_K, Q)}
    {\Hom_\ur(\Gamma_K, Q)}
  \simeq
  \bigoplus_{\fp\in\cP_K}
  \Bigl(
    \bigslant
      {\Hom(\Gamma_{K_\fp}, Q)}
      {\Hom_\ur(\Gamma_{K_\fp}, Q)}
  \Bigr).
\end{equation}
The quotient groups $\Hom(\Gamma_{K_\fp}, Q) / \Hom_\ur(\Gamma_{K_\fp}, Q)$ can be thought of as follows: two homomorphisms $\Gamma_{K_\fp}\to Q$ lie in the same coset (they are \emph{unramified twists} of each other) if and only if they agree on the inertia subgroup $\Gamma_{K_\fp}^0 \subset \Gamma_{K_\fp}$.

In order to prove \Cref{thm:intro-local-global}, our basic strategy is to combine the local--global principle for~$\Hom(\Gamma_K, Q)$ with that for $\Hom(\Gamma_K, N)$, following a classical inductive approach.
(See for example \cite{aloww-inductive-methods} for a recent treatment of inductive methods.)
However, the fact that the local--global principle for~$\Hom(\Gamma_K, Q)$ holds only \emph{up to unramified homomorphisms} leads to a difficulty in implementing this idea naively: how can we count the lifts of a homomorphism that we know only up to unramified homomorphisms?
The simplest way to get around this indeterminacy is to prove that the distribution of the last jumps of the lifts $\rho \in \Hom(\Gamma_{K_\fp}, G)$ of a given $\bar\rho \in \Hom(\Gamma_{K_\fp}, Q)$ is in fact unchanged when replacing~$\bar\rho$ with an unramified twist.
This is the content of the following purely local theorem, which is the main technical ingredient of this paper:

\begin{theorem}
  \label{thm:minembed-inertial}
  Let~$K := \F_q\ppar{T}$ be a local function field of characteristic~$p$.
  Consider a short exact sequence $1 \to N \to G \stackrel\pi\to Q \to 1$ with finite abelian $p$-groups $N \subseteq Z(G)$ and~$Q$.
  For any~$\bar\rho_1, \bar\rho_2 \in \Hom(\Gamma_K, Q)$ such that~$\bar\delta := \bar\rho_2 - \bar\rho_1$ is unramified and for any $v \geq 0$, we have
  \[
    \cardsuchthat{\rho \in \Hom(\Gamma_K, G)}{
      \pi\circ\rho = \bar\rho_1 \\
      \lastjump \rho = v
    }
    =
    \cardsuchthat{\rho \in \Hom(\Gamma_K, G)}{
      \pi\circ\rho = \bar\rho_2 \\
      \lastjump \rho = v
    }.
  \]
\end{theorem}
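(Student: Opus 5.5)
The plan is to build an explicit bijection between the two sets of lifts. It should commute with twisting by $\Hom(\Gamma_K,N)$ and should not increase last jumps; running the same construction for $-\bar\delta$ then gives equality. Unramified homomorphisms factor through $\Gamma_K/\Gamma_K^0 \simeq \hat\Z$, generated by a Frobenius $\varphi$. So choose $g\in G$ with $\pi(g)=\bar\delta(\varphi)$ and let $d\colon\Gamma_K\to G$ be the unramified homomorphism with $d(\varphi)=g$; it lifts $\bar\delta$.

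For a lift $\rho_1$ of $\bar\rho_1$, the map $\sigma\mapsto\rho_1(\sigma)d(\sigma)$ lifts $\bar\rho_2$ but is usually not a homomorphism. Since $[G,G]\subseteq N\subseteq Z(G)$, the commutator induces a biadditive map $\beta\colon Q\times Q\to N$. Expanding $\rho_1(\sigma\tau)d(\sigma\tau)$ shows that the defect is the $2$-cocycle $(\sigma,\tau)\mapsto\beta(\bar\delta(\sigma),\bar\rho_1(\tau))^{\pm1}$, with values in $N$. This cocycle depends only on $\bar\rho_1$ and $\bar\delta$, not on the chosen lift $\rho_1$.

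Now $H^2(\Gamma_K,N)=0$, because $\Gamma_K$ has $p$-cohomological dimension $\le1$ in characteristic $p$ (this is also the reason lifts exist, \Cref{lem:embpb-solvable}). Hence there is a continuous cochain $\varepsilon\colon\Gamma_K\to N$ with $d\varepsilon$ equal to this cocycle. Then $\rho_2:=\rho_1\cdot d\cdot\varepsilon$ is a homomorphism lifting $\bar\rho_2$. The map $\rho_1\mapsto\rho_1 d\varepsilon$ is equivariant for twisting by $\Hom(\Gamma_K,N)$. Both sets of lifts are torsors under that group, so the map is a bijection.

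It remains to control last jumps. Suppose $v\ge\lastjump\rho_1$ (so $v\ge\lastjump\bar\rho_1$). Then $d$ is trivial on $\Gamma_K^v\subseteq\Gamma_K^0$, and $\rho_2(\Gamma_K^v)=\varepsilon(\Gamma_K^v)$. So it suffices to choose $\varepsilon$ vanishing on $\Gamma_K^v$ for every $v\ge\lastjump\bar\rho_1$, i.e.\ with ``last jump'' at most that of $\bar\rho_1$. Given this, $\lastjump\rho_2\le\lastjump\rho_1$. The analogous construction from $\bar\rho_2$ back to $\bar\rho_1$ (using $-\bar\delta$ and $\lastjump\bar\rho_2=\lastjump\bar\rho_1$) gives the reverse inequality on the corresponding lifts. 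Since all maps are bijections respecting $\lastjump\le v$ in both directions, counting lifts with $\lastjump\le v$ gives equal numbers for every $v$. Differencing in $v$ then gives the claim.

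The main obstacle is finding an $\varepsilon$ with this ramification control. The plan is to decompose $N$ and $Q$ into cyclic factors; by biadditivity of $\beta$ this reduces to the case of a cyclic character $\chi=\bar\rho_1$-component of order $p^m$ and an unramified character $\psi$. Then I would solve explicitly with Artin--Schreier--Witt theory. Let $\chi$ correspond to a Witt vector $a\in W_m(K)$, which may be taken reduced so that its Witt-vector valuations encode $\lastjump\chi$. Let $\psi$ correspond to a constant $b\in W_m(\F_q)$, with $x$ satisfying $F(x)-x=b$ and $x\in W(\F_q^{\alg})$. The cup product $\chi\cup\psi$ should then be trivialised by a Witt vector $y$ with $F(y)-y$ expressed as products like $a\cdot x$ (or $F(x)a$). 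This is constant-times-$a$ over the unramified closure, so $y$ has the same pole orders as the solution for $a$. The resulting $\varepsilon$ should therefore be trivial on $\Gamma_K^v$ for $v\ge\lastjump\chi$. The delicate point I expect is making this Witt-vector computation rigorous for $m>1$ (and $p=2$), where carries can occur. If that fails, the fallback is to work over the unramified extension $L$ killing $\psi$, where upper numbering is unchanged and the cocycle becomes a coboundary of an explicitly unramified-by-ramified-controlled cochain, and then descend.
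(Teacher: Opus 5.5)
Your architecture is the paper's: lift $\bar\delta$ to an unramified $d$, and correct the pointwise product by an $N$-valued cochain that depends only on $(\bar\rho_1,\bar\delta)$. You build that cochain by Artin--Schreier--Witt theory from a solution of $\wp(y)=a\cdot x$ (the paper's $m_\varepsilon=[m_{\bar\rho},g_{\bar\delta}]$), and bound its last jump over an unramified extension.

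Your wrap-up differs harmlessly from the paper's reduction to $\minlift\bar\rho_1=\minlift\bar\rho_2$. The twist-equivariant bijection $\Phi\colon\rho\mapsto\rho d\varepsilon$ together with a reverse map $\Psi$ suffices. In fact $\Psi\circ\Phi$ is twisting by a fixed $\eta\in\Hom(\Gamma_K,N)$ of finite order $e$. Hence $\lastjump\rho=\lastjump(\eta^e\rho)\le\dots\le\lastjump(\eta\rho)\le\lastjump\Phi(\rho)\le\lastjump\rho$, so $\Phi$ preserves last jumps exactly. One slip: use $v>\lastjump\rho_1$ rather than $\ge$. For ramified $\rho_1$ the group $\rho_1(\Gamma_K^{\lastjump\rho_1})$ is nontrivial, and $\varepsilon$ typically does not vanish on $\Gamma_K^{\lastjump\bar\rho_1}$ either.

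The genuine gap is the step you call the main obstacle. It is the entire content of the theorem (the paper's \Cref{lem:explicit-epsilon}), and you leave it at ``should''.

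First, you never define $\varepsilon$ from $y$, and $\tau\mapsto\tau(y)-y$ is not $N$-valued, because $\wp(\tau(y)-y)=\psi(\tau)\,a\neq0$. Take $\wp(\alpha)=a$, so $\chi(\tau)=\tau(\alpha)-\alpha$, and $\wp(x)=b$, $\wp(y)=ax$. The cochain that works is $\varepsilon(\tau):=\tau(y)-y-\psi(\tau)\alpha$. Then $\wp(\varepsilon(\tau))=0$, and a direct computation gives $\partial\varepsilon(\sigma,\tau)=-\psi(\tau)\chi(\sigma)$; negate $\varepsilon$ if your sign convention requires it. Since $\psi$ vanishes on inertia, $\varepsilon|_{\Gamma_K^0}$ is the Artin--Schreier--Witt character of $ax$ over $K'=\F_{q'}\ppar{T}$, where $x\in W_m(\F_{q'})$, restricted to $\Gamma_{K'}^0=\Gamma_K^0$.

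Second, the ``carries'' you fear come from working with Witt components, and they disappear in the normal form of \Cref{prop:local-fundom}. Write $a=\sum_n a_n[T]^{-n}$ with $a_n\in W_m(\F_q)$ and $n\in\{0\}\cup(\N\setminus p\N)$. Then $ax=\sum_n(a_nx)[T]^{-n}$ is again in normal form over $K'$. The criterion $p^{\mu_{v+1}(n)}a_n=0$ of \Cref{prop:lj-asw} passes to $a_nx$, which gives $\lastjump\varepsilon\le\lastjump\chi$ for every $m$ and every $p$, including $p=2$.

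Your fallback would not replace this. On $\Gamma_L$ the cocycle vanishes, so any homomorphism $\Gamma_L\to N$ solves the problem there. Deciding which of these come from a solution on all of $\Gamma_K$ with small last jump is the original problem again. The unramified extension is only useful for evaluating the last jump of an $\varepsilon$ already defined on $\Gamma_K$.
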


\begin{center}
\begin{tikzpicture}[y=1.3cm]
  \draw[black!20] (-3,0) -- (8,0);
  \draw[black!20] (-3,1.5) -- (8,1.5);
  \draw[black!20] (-3,2) -- (8,2);
  \draw[black!20] (-3,2.3) -- (8,2.3);
  \draw[->] (-3,-0.3) -- node[above, sloped, overlay] {$\lastjump$} (-3,3);
  \newcommand{\stuff}[4]{
    \node[circle, inner sep=0.2em, fill=white] (barrho#1) at (0,0) {$\bar\rho_#1$};
    \path[#2] (-1.5,1.5) node[fill=white] {$\rho$} edge node[below,sloped] {#3} (barrho#1);
    \path (-0.65,1.5) node[fill=white] {$\rho$} edge (barrho#1);
    \path (0,2) node[fill=white] {$\rho$} edge (barrho#1);
    \path (0.75,2.3) node[fill=white] {$\rho$} edge (barrho#1);
    \node at (1.3,2.7) {$\iddots$};
    \draw[->, rounded corners, black!50] (-1.6,1.75) -- (-0.65,1.75) -- (0,2.25) node[above, rotate=35] {#4} --  (0.75,2.55) -- (1.3,2.95);
  }
  \stuff{1}{|->}{$\pi\circ$}{\tiny twist by $\delta\colon\Gamma_K\to N$}
  \begin{scope}[shift={(5,0)}]
    \stuff{2}{}{}{}
  \end{scope}
  \draw[<->, decorate, decoration={zigzag, segment length=4, amplitude=0.8, post length=3pt, pre length=3pt}] (barrho1) -- node[below] {unramified twists} (barrho2);
\end{tikzpicture}
\end{center}

To prove \Cref{thm:minembed-inertial}, it in fact suffices to prove that the smallest last jump among all lifts of~$\bar\rho_1$ equals the smallest last jump among all lifts of~$\bar\rho_2$.
We achieve this by explicitly constructing for every lift~$\rho_1$ of~$\bar\rho_1$ and for every unramified lift~$\delta$ of~$\bar\delta := \bar\rho_2-\bar\rho_1$ a lift $\rho_2 = \varepsilon\cdot\delta\cdot\rho_1$ of $\bar\rho_2 = \bar\delta\cdot\bar\rho_1$ with $\lastjump\rho_2 \leq \lastjump\rho_1$.
The factor $\varepsilon \colon \Gamma_{K_\fp} \to N$ is defined by an explicit formula, relying on Artin--Schreier--Witt theory.
(See \Cref{subsn:proof-minembed-inertial}.)

Informally, our construction extends unramified twisting to the non-abelian setting, connecting the two sides of \Cref{thm:minembed-inertial}.
This modifies not only the unramified part ($\rho_1$ and~$\rho_2$ need not coincide when restricted to the inertia subgroup), but also affects the first few ramification subgroups (up to~$\lastjump \bar\rho_1$).
This partially explains the specificity of the last jump: this height is insensitive to modifying only the first few ramification subgroups, as opposed to the discriminant which depends on the sizes of all ramification subgroups (\Cref{eq:def-discr-exp}).

\subsection{Organization of the paper}

In \Cref{sn:preliminaries}, we recall facts about $p$-extensions, class field theory and Artin--Schreier--Witt theory; we also define the notions of \emph{unramified twists}, \emph{inertial types}, \emph{inertial heights} and the \emph{last jump}.
In \Cref{sn:locglob}, we prove \Cref{thm:abstract-locglob}, which states that any inertial height satisfying a form of \Cref{thm:minembed-inertial} satisfies a local--global principle similar to \Cref{thm:intro-local-global}.
\Cref{sn:minembed-urtwist} is dedicated to proving \Cref{thm:minembed-inertial}, which together with \Cref{thm:abstract-locglob} implies \Cref{thm:intro-local-global}.
\Cref{sn:counting-d4} illustrates the use of \Cref{thm:intro-local-global} by applying it, together with \cite[Lemma~5.4]{wildcount} and with the results of \cite{imai-wild-ramification-groups}, to describe the asymptotics of $D_4$-extensions of rational global function fields in characteristic~$2$ (\Cref{thm:count-global-D4}).
In \Cref{sn:generalizations}, we discuss generalizations of \Cref{thm:intro-local-global}: we describe a more general local--global principle for extensions unramified outside a single prime of degree coprime to~$p$ (\Cref{thm:single-prime}), and we give a counterexample to the naive generalization of \Cref{thm:intro-local-global} to discriminants (\Cref{prop:counterexample-discr}).

\subsection{Terminology and notation}
\label{subsn:notation}

In this article, we let $\N = \{1,2,\dots\}$.
All finite groups are equipped with the discrete topology, and infinite Galois groups are equipped with the Krull topology.
If~$\Gamma, G$ are topological groups, $\Hom(\Gamma, G)$ is the set of \emph{continuous} group homomorphisms~$\Gamma \to G$ (which is an abelian group if~$G$ is abelian).
Similarly, the sets~$H^i(\Gamma, G)$ are defined using continuous group cohomology.
They are abelian groups if~$G$ is abelian.
If~$G$ is non-abelian, they are defined only if $i \in \{0,1\}$, and are pointed sets with no natural group structure, cf.~\cite[Chap.~VII, Annexe]{serrecl}.
By default, if~$G$ is a finite group, we assume that~$\Gamma_K$ acts trivially on~$G$, so~$H^1(\Gamma_K, G)$ is the set of $G$-conjugacy classes of continuous group homomorphisms~$\Gamma_K \to G$, corresponding to $G$-extensions of~$K$ in the sense of \cite[Subsection~2.1]{wildcount}.

If~$K$ is a global or a local field, we let~$K^\ur$ be the maximal unramified extension of~$K$ inside~$K^\sep$.
If~$G$ is a topological group, we let $\Hom_\ur(\Gamma_K, G) := \Hom(\Gal(K^\ur|K), G)$, which we identify with the subset of~$\Hom(\Gamma_K, G)$ consisting of those homomorphisms that are trivial on~$\Gamma_{K^\ur} \subseteq \Gamma_K$.
(When~$K$ is a local field, $\Gamma_{K^\ur}$ is the inertia subgroup of~$\Gamma_K$.)

For any local field $K$ and any real number $v \geq -1$, we let~$\Gamma_K^v$ be the $v$-th higher inertia subgroup of~$\Gamma_K$ in the upper numbering (see \cite[Chap.~IV, \S3]{serrecl}).
For instance, $\Gamma_K^0$ is the inertia subgroup of~$\Gamma_K$ and~$\Gamma_K^1$ is the wild inertia subgroup.

For each prime~$\fp$ of a global field~$K$, we fix a prime~$\mathfrak P$ of~$K^\sep$ above~$\fp$ and use~$(K^\sep)_{\mathfrak P}$ as our go-to separable closure of~$K_\fp$.
We obtain an embedding $\Gamma_{K_\fp} \hookrightarrow \Gamma_K$ via $\Gamma_{K_\fp} = \Gal((K^\sep)_{\mathfrak P}|K_\fp) \simeq D_{\mathfrak P|\fp} \subseteq \Gamma_K$.
Any other choice of~$\mathfrak P$ gives rise to a conjugate embedding.
Composing a group homomorphism~$\rho \colon \Gamma_K \to G$ with any two of these conjugate embeddings yields conjugate homomorphisms $\Gamma_{K_\fp} \to G$, so any conjugation-invariant property of the restricted homomorphism~$\rho\restfp$ (e.g., its last jump) is in fact canonical.
If~$G$ is abelian, the restriction $\rho\restfp \colon \Gamma_{K_\fp} \to G$ is itself independent of the choice of~$\mathfrak P$.

\subsection{Acknowledgments}

This work was supported by the Deutsche Forschungsgemeinschaft (DFG, German Research Foundation)---Project-ID 491392403---TRR 358 (Project A4).
The authors are grateful to Jürgen Klüners, Raphael Müller and Carlo Pagano for helpful discussions.

\section{Preliminaries}
\label{sn:preliminaries}

\subsection{Embedding problems in characteristic~$p$}

Let~$K$ be a field of characteristic~$p$.
For $p$-extensions of~$K$, there are no obstructions to solving embedding problems with central kernel:

\begin{lemma}
  \label{lem:embpb-solvable}
  Consider a short exact sequence $1 \to N \to G \overset\pi\to Q \to 1$, and assume that~$N \subseteq Z(G)$ is a finite abelian $p$-group.
  Then, the natural map $\Hom(\Gamma_K, G) \to \Hom(\Gamma_K, Q)$, $\rho \mapsto \pi\circ\rho$ is surjective.
  Moreover, twisting (pointwise multiplication) defines a simply transitive action of~$\Hom(\Gamma_K, N)$ on each fiber of that surjection.
\end{lemma}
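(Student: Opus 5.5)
The plan is to split the statement into two parts. The surjectivity part is an embedding problem with central kernel, so we attack it through its cohomological obstruction. The twisting part is a direct verification that uses only that~$N$ is central.

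For the second part, let $\rho\in\Hom(\Gamma_K,G)$ and $\delta\in\Hom(\Gamma_K,N)$. The pointwise product $\delta\cdot\rho\colon\sigma\mapsto\delta(\sigma)\rho(\sigma)$ is a homomorphism:
\[
  \delta(\sigma\tau)\rho(\sigma\tau)=\delta(\sigma)\delta(\tau)\rho(\sigma)\rho(\tau)=\delta(\sigma)\rho(\sigma)\delta(\tau)\rho(\tau),
\]
where the second equality holds because $\delta(\tau)\in N\subseteq Z(G)$. It is continuous, and $\pi\circ(\delta\cdot\rho)=\pi\circ\rho$. This gives an action of the abelian group $\Hom(\Gamma_K,N)$ on each fiber. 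The action is free, since $\delta\cdot\rho=\rho$ forces $\delta=1$. It is transitive: if $\pi\circ\rho'=\pi\circ\rho$, set $\delta:=\rho'\rho^{-1}$ pointwise. Then $\delta$ takes values in $N=\ker\pi$, and the same centrality computation shows that~$\delta$ is a continuous homomorphism.

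For surjectivity, fix $\bar\rho\in\Hom(\Gamma_K,Q)$. Choose any set-theoretic section $s\colon Q\to G$ and consider $s\circ\bar\rho$. Since~$N$ is central, the obstruction
\[
  c(\sigma,\tau)=s\bar\rho(\sigma)\,s\bar\rho(\tau)\,s\bar\rho(\sigma\tau)^{-1}
\]
is a continuous $2$-cocycle with values in~$N$, where $\Gamma_K$ acts trivially on~$N$. Its class is the pullback $\bar\rho^*[G]\in H^2(\Gamma_K,N)$ of the extension class $[G]\in H^2(Q,N)$. The map~$\bar\rho$ lifts exactly when this class vanishes: if $c=d\beta$, then $\beta^{-1}\cdot s\bar\rho$ is a continuous homomorphism lifting~$\bar\rho$, with continuity automatic because everything factors through the finite quotient $\Gamma_K/\ker\bar\rho$.

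It therefore suffices to show $H^2(\Gamma_K,N)=0$ for every finite abelian $p$-group~$N$ with trivial action. By dévissage along a composition series of~$N$, via the long exact sequence in cohomology, this reduces to $N=\Z/p$. Here we use Artin--Schreier theory: the short exact sequence of discrete $\Gamma_K$-modules
\[
  0\to\F_p\to K^\sep\xrightarrow{x\mapsto x^p-x}K^\sep\to0
\]
(surjective because Artin--Schreier polynomials are separable), together with the additive Hilbert~90, $H^i(\Gamma_K,K^\sep)=0$ for $i\ge1$, yields $H^2(\Gamma_K,\F_p)=0$. Equivalently, $\Gamma_K$ has $p$-cohomological dimension $\le1$ in characteristic~$p$, which is standard (Serre, \emph{Galois Cohomology}, II.2.2).

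The main obstacle is the reduction of lifting to the vanishing of $\bar\rho^*[G]$, in particular keeping continuity under control. Working with the finite quotient $\Gamma_K/\ker\bar\rho$ and then inflating handles this, so the surjectivity step should be routine once the $p$-cohomological dimension bound is cited.
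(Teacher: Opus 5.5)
Your proof is correct and takes the same route as the paper: $H^2(\Gamma_K,N)=0$ by d\'evissage to $N=\F_p$, combined with the $H^2$-obstruction for central embedding problems. You merely unpack the paper's two citations (you get $H^2(\Gamma_K,\F_p)=0$ from Artin--Schreier and additive Hilbert~90, and Serre's exact sequence of pointed sets from an explicit factor set), and you write out the twisting computation that the paper calls straightforward.

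One small correction: the cochain $\beta$ with $c=d\beta$ need not factor through $\Gamma_K/\ker\bar\rho$. If it did, the lift would factor through $\bar\rho(\Gamma_K)$, which fails for instance for a lift to $H_3(\F_p)$ of a surjection onto $\F_p^2$. Continuity of $\beta^{-1}\cdot s\bar\rho$ holds instead because $\beta$ is continuous by the definition of continuous cochains.
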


\begin{proof}
  By \cite[(6.1.2)]{nsw}, we have~$H^2(\Gamma_K,\F_p) = 0$.
  By induction on the order of~$N$ (which contains an element of order~$p$ unless $N=0$), using the long exact sequence in group cohomology, it follows that~$H^2(\Gamma_K, N) = 0$.
  The surjectivity claim then follows from the exact sequence $H^1(\Gamma_K, G) \to H^1(\Gamma_K, Q) \to H^2(\Gamma_K, N)$ of pointed sets, see \cite[Chap.~VII, Annexe, Prop.~2]{serrecl}.
  The properties of twisting are straightforward.
\end{proof}

(See \cite[Theorem~1.1]{embpb} for generalizations of \Cref{lem:embpb-solvable} to non-abelian kernels.)

\subsection{Unramified extensions}
\label{subsn:unram-ext}

The maximal unramified extension of a local function field $K = \F_q\ppar{T}$ is $K^\ur = \bar\F_q\ppar{T}$, and that of a rational global function field $K = \F_q(T)$ is $K^\ur = \bar\F_q(T)$ by the Riemann--Hurwitz formula.
In both cases, $\Gal(K^\ur|K) \simeq \Gal(\bar\F_q|\F_q) \simeq \hat\Z$ is topologically generated by the lift of the Frobenius automorphism $x \mapsto x^q$ of $\bar\F_q|\F_q$, so any continuous unramified homomorphism $\Gamma_K\to G$ is determined by the image of this generator.
Therefore,
\begin{equation}
  \label{eq:unram-ext}
  \card{\Hom_\ur(\Gamma_K, G)} = \card G,
\end{equation}
and for any normal subgroup $N$ of $G$, the natural map
\begin{equation}
  \label{eq:unram-lift}
  \Hom_\ur(\Gamma_K,G)
  \twoheadrightarrow
  \Hom_\ur(\Gamma_K,G/N)
\end{equation}
is surjective.

We also make explicit the way in which the local and global identifications of~$\hat\Z$ with the Galois group of the maximal unramified extension are related:

\begin{lemma}
  \label{inclusion-frobenius}
  Let $K = \F_q(T)$, and let $\fp \in \cP_K$.
  Under the identification of both sides with~$\hat\Z$, the restriction map $\Gal(K_\fp^\ur|K_\fp) \hookrightarrow \Gal(K^\ur|K)$ coincides with multiplication by~$\deg\fp$.
\end{lemma}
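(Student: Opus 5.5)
The plan is to compare both Galois groups through their residue field extensions. We have $K^\ur = \bar\F_q(T)$ and $K_\fp^\ur = \bar\F_{q}\ppar{t}$, where $t$ is a uniformizer at~$\fp$. Write $\F_\fp$ for the residue field of~$\fp$, so $\card{\F_\fp} = q^{\deg\fp}$. Then $K_\fp \simeq \F_\fp\ppar{t}$ and $K_\fp^\ur \simeq \bar\F_q\ppar{t}$.

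Under the identification of \Cref{subsn:unram-ext}, the generator~$1$ of $\Gal(K^\ur|K)\simeq\hat\Z$ is the automorphism acting on the constants $\bar\F_q$ as $x\mapsto x^q$, fixing~$T$. The generator~$1$ of $\Gal(K_\fp^\ur|K_\fp)\simeq\hat\Z$ is the local Frobenius: the unique automorphism fixing~$K_\fp$ and acting on the residue field $\bar\F_q$ as $x\mapsto x^{q^{\deg\fp}}$, because the residue field of~$K_\fp$ is~$\F_\fp$.

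First, I fix the embeddings. Using the chosen prime~$\mathfrak P$, we get $K\subseteq K_\fp$ and $K^\ur\subseteq K_\fp^\ur$, since $K^\ur K_\fp$ is unramified over~$K_\fp$. Restriction from $\Gal(K_\fp^\ur|K_\fp)$ to $K^\ur$ is then the composite of $\Gamma_{K_\fp}\hookrightarrow\Gamma_K\twoheadrightarrow\Gal(K^\ur|K)$.

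Second, I compute the image of the local Frobenius $\varphi_\fp$. An element of $\Gal(K^\ur|K)$ is determined by its action on the constant field $\bar\F_q\subseteq K^\ur$, since $K^\ur=\bar\F_q\cdot K$. The constants $\bar\F_q\subseteq K^\ur$ embed into $K_\fp^\ur$ as a subfield mapping isomorphically onto the residue field, namely the Teichmüller-type constants in $\bar\F_q\ppar{t}$. So $\varphi_\fp$ acts on them as $x\mapsto x^{q^{\deg\fp}}$, which is the $\deg\fp$-th power of the global generator. Hence $1\mapsto\deg\fp$.

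Third, by continuity and $\hat\Z$-linearity, the restriction map is multiplication by~$\deg\fp$. It is injective because $\hat\Z$ is torsion-free.

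The main obstacle I expect is the second step: checking carefully that the constant field $\bar\F_q$ of~$K^\ur$ maps onto the residue field compatibly. This amounts to checking that the reduction map restricted to algebraic constants is the identity on~$\bar\F_q$, i.e., that the embedding $\bar\F_q\to\bar\F_q\ppar{t}$ composed with reduction modulo~$t$ is a field map fixing~$\F_q$. Since the residue action is by definition the Frobenius of~$\F_\fp$, the comparison then follows.
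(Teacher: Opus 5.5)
Your proposal is correct and takes the same route as the paper: both sides are identified with $\hat\Z$ through their action on the constants $\bar\F_q$, and the local Frobenius acts there as $x\mapsto x^{q^{\deg\fp}}$, which is the $\deg\fp$-th power of the global generator $x\mapsto x^q$. Your extra check, that the constants of $K^\ur$ map isomorphically and Galois-equivariantly onto the residue field of $K_\fp^\ur$, makes explicit a compatibility that the paper's short proof leaves implicit.
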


\begin{proof}
  Both sides are identified with~$\hat\Z$ by looking at how they act on~$\bar\F_q$: the (topological) generator of~$\Gal(K_\fp^\ur|K_\fp)$ is the unique lift $\tilde\sigma_{K_\fp}$ of $x \mapsto x^{N(\fp)}$, and the generator of~$\Gal(K^\ur|K)$ is the unique lift of $\tilde\sigma_K \colon x \mapsto x^q$.
  The result follows as $N(\fp) = q^{\deg\fp}$.
\end{proof}

\subsection{Local--global principles for abelian $p$-extensions}

Let~$K$ be either a local or a global field, and let~$G$ be an abelian group.

\begin{definition}
  \label{def:unramified-twist-inertial-type}
  Two homomorphisms~$\rho_1, \rho_2 \in \Hom(\Gamma_K,G)$ are \emph{unramified twists} of each other if they coincide when restricted to~$\Gamma_{K^\ur}$, or equivalently when~$\rho_1 - \rho_2 \in \Hom_\ur(\Gamma_K,G)$.
  We call \emph{inertial types} the equivalence classes for this relation, i.e., the elements of the quotient group
  \[
    \cI(K,G) :=
    \bigslant
      {\Hom(\Gamma_K,G)}
      {\Hom_\ur(\Gamma_K,G)}.
  \]
\end{definition}

If $K$ is a local function field or a rational global function field, \Cref{eq:unram-ext} shows that each inertial type contains exactly $\card G$ homomorphisms.

If~$K$ is a global function field, then restriction to $\Gamma_{K_\fp}$ for all~$\fp \in \cP_K$ induces a homomorphism
\begin{equation}
  \label{def:alpha}
  \cI(K, G) \to
  \bigoplus_{\fp \in \cP_K}
    \cI(K_\fp, G)
\end{equation}
which is injective, as by definition of~$K^\ur$ any $\rho \in \Hom(\Gamma_K, G)$ which is unramified at all primes~$\fp \in \cP_K$ factors through~$\Gal(K^\ur|K)$.
In fact, class field theory implies the following:

\begin{lemma}
  \label{lem:cft}
  Assume that~$K = \F_q(T)$ is a rational global function field of characteristic~$p$, and that~$G$ is a finite abelian $p$-group.%
  \footnote{
    It is enough to assume that~$G$ is a finite abelian group of order coprime to~$|\F_q^\times| = q-1$.
    The condition that~$K$ is a rational function field can also be relaxed into the condition $\gcd(\card{\mathrm{Pic}^0(X)}, \card G)=1$, where~$X$ is the nonsingular projective curve over~$\F_q$ associated to~$K$ (the normalization of $\mathbb P^1_{\F_q}$ in $\mathrm{Spec}\,K$).
  }
  Then, the map from \Cref{def:alpha} is an isomorphism.
\end{lemma}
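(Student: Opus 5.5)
Injectivity of the map in \Cref{def:alpha} was established just above, so the plan is to prove surjectivity. Both sides are additive in~$G$: the functors $G \mapsto \Hom(\Gamma_K, G)$ and $G\mapsto\Hom_\ur(\Gamma_K,G)$ commute with finite direct products. We may therefore reduce to~$G$ cyclic, $G = \Z/p^n\Z$, though the argument below works for any finite abelian $p$-group.

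We dualize using class field theory. For the global field~$K$, continuous homomorphisms $\Gamma_K \to G$ correspond to continuous homomorphisms $C_K := \mathbb A_K^\times/K^\times \to G$. For each~$\fp$, local class field theory identifies $\Hom(\Gamma_{K_\fp}, G)$ with $\Hom(K_\fp^\times, G)$, and under this identification the unramified homomorphisms are exactly those trivial on~$\cO_\fp^\times$. Hence $\cI(K_\fp,G) \simeq \Hom(\cO_\fp^\times, G)$, and the restriction map corresponds to restricting characters of~$C_K$ along $K_\fp^\times \to C_K$. Similarly, a global character is unramified exactly when it is trivial on the image of $U := \prod_\fp \cO_\fp^\times$ in~$C_K$. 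This gives $\cI(K,G) \simeq \Hom(U/(U\cap K^\times), G)$, up to the question of extending characters, which is handled in the next paragraph. For $K = \F_q(T)$ we have $U \cap K^\times = \F_q^\times$, the constants.

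So the statement reduces to the claim that every continuous character~$\chi$ of $\prod_\fp \cO_\fp^\times$ which is trivial on all but finitely many factors and trivial on the diagonal~$\F_q^\times$ extends to a character of~$C_K$. The hypothesis that~$G$ is a $p$-group while $\card{\F_q^\times} = q-1$ is prime to~$p$ forces any such~$\chi$ to kill~$\F_q^\times$ automatically. It remains to extend~$\chi$ from $U K^\times/K^\times \simeq U/\F_q^\times$ to~$C_K$. The quotient $C_K/(UK^\times/K^\times) \simeq \mathrm{Pic}(\mathbb P^1_{\F_q}) \simeq \Z$ via the degree, which is free. Therefore the sequence $0 \to U/\F_q^\times \to C_K \to \Z \to 0$ splits as topological groups: choose an idele of degree one, for instance a uniformizer at a degree-one place. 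The character~$\chi$ then extends, for example by sending that idele to~$0$. This extension is continuous because the subgroup $U/\F_q^\times$ is open in~$C_K$.

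The extended character corresponds via global reciprocity to some $\rho \in \Hom(\Gamma_K, G)$. Its restriction to each inertia group $\cO_\fp^\times$ is the prescribed~$\chi_\fp$, so its local inertial types are the given ones. The main obstacle is bookkeeping rather than substance. One must check that the local and global reciprocity maps are compatible, so that restriction to~$\Gamma_{K_\fp}$ corresponds to composition with $K_\fp^\times \to C_K$; this is standard. One must also check that the $\cO_\fp^\times$ correspond exactly to the inertia subgroups in the abelianized setting. Finally, one must observe that the class group obstruction vanishes because $\mathrm{Pic}^0(\mathbb P^1) = 0$ and that the constant obstruction vanishes because $p \nmid q-1$, which is exactly what the footnote's generalization points to.
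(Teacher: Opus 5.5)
Your proposal is correct and follows essentially the same route as the paper. Both arguments use the class field theory dictionary, the splitting $K_\fp^\times \simeq \Z \times \cO_\fp^\times$, and the sequence $0 \to \F_q^\times \to \Z \times \prod_\fp \cO_\fp^\times \to C_K \to 0$, which comes from $U \cap K^\times = \F_q^\times$ and $\mathrm{Pic}(\mathbb P^1_{\F_q}) \simeq \Z$; the hypothesis $p \nmid q-1$ then disposes of the constants. The only cosmetic difference is that you prove surjectivity by extending characters from the open subgroup $U/\F_q^\times$, relying on the injectivity already established, whereas the paper writes down the full isomorphism $\Hom(C_K, G) \simeq \Hom(\Z, G) \oplus \bigoplus_\fp \Hom(\cO_\fp^\times, G)$ directly.
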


\begin{proof}
  Class field theory (see e.g.~\cite[(7.2.11) and (8.1.26)]{nsw}) describes~$\Gamma_K^\ab$ as the profinite completion of the quotient~$\mathbb A_K^\times/K^\times$, where~$\mathbb A_K^\times$ is the group of idèles, and~$\Gamma_{K_\fp}^\ab$ as the profinite completion of $K_\fp^\times \simeq \Z \times \cO_\fp^\times$ for any $\fp \in \cP_K$.
  Thus, for any finite abelian group~$G$, we have bijections $\Hom(\Gamma_K, G) \simeq \Hom(\mathbb A_K^\times/K^\times, G)$ and, for any $\fp \in \cP_K$:
  \[
    \Hom(\Gamma_{K_\fp}, G)
    \simeq
    \Hom(K_\fp^\times, G)
    \simeq
    \underbrace{\Hom(\Z, G)}_{
      \simeq \Hom_\ur(\Gamma_{K_\fp}, G)
    }
    \oplus \Hom(\cO_\fp^\times, G),
  \]
  whence $\cI(K_\fp, G) \simeq \Hom(\cO_\fp^\times, G)$.
  The idèle class group fits into the exact sequence
  \[
    0
    \to
    \F_q^\times
    \to
    \Z \times \prod_{\fp\in\cP_K} \cO_\fp^\times
    \to
    \mathbb A_K^\times/K^\times
    \to
    0,
  \]
  where the rightmost term vanishes since~$K$ is a rational function field (the associated curve is~$\mathbb P^1_{\F_q}$, with trivial~$\mathrm{Pic}^0$).
  Since~$G$ is a finite abelian $p$-group and $p \nmid {|\F_q^\times|}$, we have isomorphisms
  \begin{align*}
    \Hom(\Gamma_K, G)
    & \simeq
    \Hom(\mathbb A_K^\times/K^\times, \, G)
    \\
    & \simeq
    \Hom \leftl(
      \Z \times \prod_{\fp\in\cP_K} \cO_\fp^\times, \;
      G
    \rightr)
    \simeq
    \underbrace{\Hom(\Z, G)}_{\simeq \; \Hom_\ur(\Gamma_K, G)}
    \oplus
    \bigoplus_{\fp \in \cP_K}
      \underbrace{
        \Hom(\cO_\fp^\times, G)
      }_{\simeq \; \cI(K_\fp, G)}
  \end{align*}
  whence
  $
    \cI(K, G)
    \simeq
    \bigoplus_\fp
      \cI(K_\fp, G).
  $
  We omit the verification that this isomorphism coincides with the map from \Cref{def:alpha}.
\end{proof}

(When~$K=\F_q(T)$ and~$G=\Z/p\Z$, \Cref{lem:cft} can be proved without any class field theory by describing and comparing normal forms for elements of~$K/\wp(K)$ and of~$K_\fp/\wp(K_\fp)$, cf.~\cite[Proposition~5.2, Corollary~5.3, Remark~5.4]{potthast}. A similar proof can be made for all finite abelian $p$-groups~$G$ via Artin--Schreier--Witt theory.)

This allows for a simple proof of \Cref{thm:intro-local-global} when~$G$ is abelian: dividing the counts of homomorphisms by~$\card G$ (the number of unramified homomorphisms, cf.~\Cref{eq:unram-ext}) means that we are counting inertial types, for which we can apply the local--global bijection of \Cref{lem:cft}.%
\footnote{
  This explanation might suggest that the factor $\frac1{\card G}$ in \Cref{thm:intro-local-global} is simply identifying homomorphisms coinciding on inertia.
  However, this interpretation is incorrect for non-abelian~$G$: for a given $\rho \in \Hom(\Gamma_{K_\fp}, G)$, there are not~$\card G$ homomorphisms sharing the same restriction to inertia, but only $\card{\mathrm{Cent}_G(\rho(\Gamma_{K_\fp}^0))}$.
}

\subsection{Inertial heights}
Fix a finite group~$G$ and a field~$K$ which is either local or global.

\begin{definition}
  \label{def:inertial-height}
  An \emph{(additive) inertial height for $G$-extensions of~$K$} is a map $\height \colon \Hom(\Gamma_K, G) \to \R_{\geq0}$ satisfying the following properties:
	\begin{enumalpha}
    \item
      \label{fact:conjugation}
      For any~$\rho \in \Hom(\Gamma_K, G)$ and any $g \in G$, we have $\height \rho = \height(g \rho g^{-1})$.
    \item
      \label{fact:height-zero}
      We have $\height \rho = 0$ if and only if $\rho \in \Hom_\ur(\Gamma_K, G)$.
    \item
      \label{fact:inertial}
      The value $\height \rho$ only depends on the restriction $\rho|_{\Gamma_{K^\ur}}$ of~$\rho$ to~$\Gamma_{K^\ur}$.

      \noindent
      (When~$K$ is a local field, $\Gamma_{K^\ur}$ is the inertia subgroup~$\Gamma^0_K$.)
    \item
      \label{fact:finiteness}
      For any~$v \geq 0$, there are only finitely many $\rho \in \Hom(\Gamma_K, G)$ with $\height \rho \leq v$.

      \noindent
      (Property~\ref{fact:finiteness} is sometimes called the ``Northcott property.'')
	\end{enumalpha}
\end{definition}

Assume that~$K$ is a global function field.
For each~$\fp \in \cP_K$, let~$\height_\fp$ be a local inertial height for $G$-extensions of the completion~$K_\fp$, and assume that there is a $\mu > 0$ such that all the heights~$\height_\fp$ take values in $\{0\} \cup [\mu,+\infty)$.
Then, for any $\rho \in \Hom(\Gamma_K, G)$, the number $\height_\fp(\rho\restfp)$ is well-defined by property \ref{fact:conjugation} (cf.~\Cref{subsn:notation}), and we define a \emph{global height} $\height \colon \Hom(\Gamma_K, G) \to \R_{\geq 0}$ by
\begin{equation}
  \label{def:global-height}
  \height {\rho}
  :=
  \sum_{\fp \in \cP_K}
    \deg \fp
    \cdot
    \height_\fp \bigl( \rho\restfp \bigr).
\end{equation}
(Only finitely many primes~$\fp$ are ramified, so almost all the terms of this sum are zero by property~\ref{fact:height-zero}.
The Northcott property for the global height follows from the fact that only the primes of degree $\leq \frac v\mu$ can ramify in an extension of height $\leq v$, from the Northcott property for the local heights~$\height_\fp$, and from the fact that there are only finitely many separable extensions of~$K$ of degree~$\leq \card G$ with bounded discriminant, cf.~\cite[Theorem~8.23.5, 3.]{hermite-goss} or \cite{hermite-wong,hermite-rosen}. Therefore, $\height$ is an inertial height for $G$-extensions of~$K$.)

\medskip

Assume that~$K$ is a local field.
A classical example of an (additive) inertial height for $G$-extensions of~$K$ is the \emph{discriminant exponent}
\begin{equation}
  \label{eq:def-discr-exp}
  \delta(\rho)
  :=
  \card{G}
  \int_{-1}^{+\infty}
    \leftl(
      1
      -
      \frac1{\card {\rho(\Gamma_K^v)}}
    \rightr)
    \d v
    \quad
    \in \Z_{\geq 0},
\end{equation}
which is such that $q^{\delta(\rho)}$ is the norm of the relative discriminant of the $G$-extension~$L|K$ associated to~$\rho$ (an étale $K$-algebra of dimension~$\card G$).%
\footnote{
  This follows from \cite[Chap.~IV, \S1, Prop.~4]{serrecl} after converting to the upper numbering and taking the norm.
  (We have $f(L|K) \cdot \card{\rho(\Gamma_K^0)} = \card{\rho(\Gamma_K)}$, and multiplying the discriminants of the~$\frac{\card G}{\card{\rho(\Gamma_K)}}$ isomorphic field factors of the étale $K$-algebra $L$ yields the factor~$\card G$.)
}
Over global function fields, the associated global height is then the degree of the discriminant divisor.

\subsection{The last jump}
\label{subsn:last-jump}

Let~$K$ be a local field, and let~$G$ be a finite group.

\begin{definition}
  \label{def:local-lj}
  For any continuous map~$\rho \colon \Gamma_K \to G$ satisfying $\rho(1)=1$ (not necessarily a group homomorphism!), we define the \emph{last jump} of~$\rho$ as follows:
  \[
      \lastjump \rho
      :=
      \inf\suchthat{
          v \in \Q_{\geq 0}
      }{
			\rho(\Gamma_K^v) = 1
      }
      \in \R_{\geq 0}.
  \]
\end{definition}

This number $\lastjump \rho$ is indeed finite:
If $\lastjump \rho = +\infty$, then the nested closed subsets $\suchthat{\tau \in \Gamma_K^v}{\rho(\tau)\neq1}$ of $\Gamma_K$ are non-empty for all $v \geq 0$.
Since~$\Gamma_K$ is compact Hausdorff, their intersection is also non-empty, hence contains some $\tau \in \bigcap_{v \geq 0} \Gamma_K^v = 1$, contradicting $\rho(1)=1$.

Note that~$\lastjump \rho$ is unaffected when composing~$\rho$ with an automorphism of~$G$, e.g., under conjugation by an element of~$G$.
Moreover, $\lastjump \rho$ only depends on the restriction of~$\rho$ to the inertia subgroup~$\Gamma_K^0$.
For any two continuous maps~$\rho, \rho' \colon \Gamma_K \to G$ with $\rho(1)=\rho'(1)=1$, denoting their pointwise product by~$\rho \cdot \rho'$, we have the ``ultrametric inequality''
\begin{equation}
    \label{eqn:ultineq}
    \lastjump {(\rho \cdot \rho')}
    \leq
    \max(
        \lastjump \rho, \,
        \lastjump \rho'
    )
\end{equation}
with equality whenever~$\lastjump \rho \neq \lastjump \rho'$.

If~$\rho$ is a group homomorphism, then $\lastjump \rho = 0$ if and only if~$\rho$ is at most tamely ramified, i.e., if $\rho(\Gamma_K^1)=1$.
When~$G$ is a $p$-group and~$K$ has residue characteristic~$p$, this amounts to~$\rho$ being unramified, and in that case $\lastjump \colon \Hom(\Gamma_K, G) \to \Q_{\geq0}$ is an (additive, local) inertial height.
If moreover~$G$ is abelian, then the last jump is related to the conductor exponent by the formula $\lastjump\rho = \max(0, \, \cond\rho - 1)$, and in particular it is an integer.
For abelian $G$, this height is also sometimes called the \emph{Artin--Schreier conductor}.

\subsection{Reminders on Artin--Schreier--Witt theory}
\label{subsn:asw}

In this section, we recall classical facts about the explicit parametrization of finite abelian $p$-extensions in characteristic~$p$.
Let $K$ be a field of characteristic~$p$, and consider a finite abelian $p$-group
\[
  G \simeq \prod_{i=1}^r \Z/p^{n_i}\Z.
\]
Although the class field theoretic description of~$\Gamma_K^\ab$ leads to a description of~$\Hom(\Gamma_K, G)$ for local and global fields, we will need the explicit parametrization provided by Artin--Schreier--Witt theory.
We denote the ring of Witt vectors over $K$ by $W(K)$ and the ring of Witt vectors of length $n$ by $W_n(K)$.
Let~$K^\perf$ (resp.~$K^\alg$) be the perfect closure of~$K$ (resp.~of~$K^\sep$), cf.~\cite[Subsection~2.3]{wildcount}.
See~$G$ as a finite $\Z_p$-module, and define the $W(K^\perf)$-module
\[
  G_K := G \otimes_{\Z_p} W(K^\perf) \simeq \prod_{i=1}^r W_{n_i}(K^\perf).
\]
(The isomorphism $\Z/p^{n_i}\Z\otimes W(K^\perf) \simeq W_{n_i}(K^\perf)$ relies on the fact that~$K^\perf$ is a perfect field. This is our reason not to instead use~$G\otimes_{\Z_p} W(K)$.)
Any endomorphism of~$K$ induces an endomorphism of~$G_K$.
For instance, the absolute Frobenius~$\sigma \colon x \mapsto x^p$ acts on~$G_K$, and its fixed points are exactly the elements of~$G\otimes_{\Z_p} W(\F_p) = G\otimes_{\Z_p}\Z_p = G$.
Similarly, for any Galois extension~$L|K$, the group $\Gal(L|K)$ acts on~$G_L$ and the fixed submodule is exactly~$G_K$.
Moreover, $H^1(\Gamma_K, G_{K^\sep})$ is the trivial group (this follows from the additive version of Hilbert's Theorem 90).
For reminders about Witt vectors and for the missing details, see~\cite[Section~4.10]{bosch-algebra} and/or \cite[Subsection~2.4]{wildcount}.

We define the $\Z_p$-linear map $\wp \colon G_K \to G_K$, $x \mapsto \sigma(x) - x$, with $\ker\wp = G$.
The map~$\wp$ is surjective when~$K$ is separably closed.
As in Artin--Schreier theory, the short exact sequence
\[
  0
  \longrightarrow
  G
  \longrightarrow
  G_{K^\sep}
  \overset\wp\longrightarrow
  G_{K^\sep}
  \to
  0
\]
induces, in Galois cohomology, an exact sequence
\[\begin{tikzcd}[row sep=0.7em]
	{H^0(\Gamma_K, G_{K^\sep})} & {H^0(\Gamma_K, G_{K^\sep})} & {H^1(\Gamma_K, G)} & {H^1(\Gamma_K,G_{K^\sep})} \\
	{G_K} & {G_K} & {\Hom(\Gamma_K,G)} & 0
	\arrow[from=1-1, to=1-2]
	\arrow[from=1-2, to=1-3]
	\arrow[from=1-3, to=1-4]
	\arrow["{=}"{marking, allow upside down}, draw=none, from=2-1, to=1-1]
	\arrow["\wp", from=2-1, to=2-2]
	\arrow["{=}"{marking, allow upside down}, draw=none, from=2-2, to=1-2]
	\arrow[from=2-2, to=2-3]
	\arrow["{=}"{marking, allow upside down}, draw=none, from=2-3, to=1-3]
	\arrow[from=2-3, to=2-4]
	\arrow["{=}"{marking, allow upside down}, draw=none, from=2-4, to=1-4]
\end{tikzcd}\]
whence an isomorphism $\Hom(\Gamma_K, G) \simeq G_K/\wp(G_K)$.
Explicitly, the $\rho \in \Hom(\Gamma_K,G)$ corresponding to a coset $[m] \in G_K/\wp(G_K)$ is obtained by picking any~$g \in \wp^{-1}(m)\subseteq G_{K^\sep}$ and by letting~$\rho(\tau) := \tau(g) - g$.
For details, one can check \cite[Corollary~2.13]{wildcount}.

\subsubsection{Local Artin--Schreier extensions}
Assume now that~$K = \F_q\ppar{T}$ is a local function field of characteristic~$p$.
Recall that~$G$ is a finite abelian $p$-group.
Let~$[T] \in W(K^\perf)$ be the Teichmüller lift of the uniformizer~$T$, and let~$\cD^0$ be the sub-$W(\F_q)$-module of~$W(K^\perf)$ spanned by the elements~$[T]^{-n}$ for~$n \in \Onotp$.
By \cite[Lemma~3.4~(ii)]{wildcount} (see also \cite[Theorem~3~(i)]{kanesaka-sekiguchi}), every coset in~$G_K/\wp(G_K)$ intersects the following $W(\F_q)$-submodule of~$G_K$:
\[
  G \otimes_{\Z_p} \cD^0
  =
  \suchthat{
    m_0
    +
    \sum_{n \in \notp}
      m_n [T]^{-n}
  }{
    m_n \in G_{\F_q} = G \otimes_{\Z_p} W(\F_q) \textnormal{ for all } n \in \Onotp \\
    m_n = 0 \textnormal{ for almost all } n
  }.
\]
This leads to a refinement of the parametrization of elements of $\Hom(\Gamma_K, G)$:

\begin{proposition}
  \label{prop:local-fundom}
  The isomorphism $\Hom(\Gamma_K, G) \simeq G_K/\wp(G_K)$ described above induces group isomorphisms
  \[
    \Hom(\Gamma_K, G)
    \simeq
    \bigslant{G\otimes\cD^0}{\wp(G_{\F_q})}
    \qquad
    \textnormal{ and }
    \qquad
    \cI(K, G)
    \simeq
    \bigslant{G\otimes\cD^0}{G_{\F_q}}.
  \]
  Moreover, the subgroup $\Hom_\ur(\Gamma_K, G)$ of $\Hom(\Gamma_K, G)$ corresponds to $G_{\F_q}/\wp(G_{\F_q})$.
\end{proposition}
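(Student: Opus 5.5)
Since every coset of $G_K/\wp(G_K)$ already meets $G\otimes\cD^0$ (the cited \cite[Lemma~3.4~(ii)]{wildcount}), the composite map $G\otimes\cD^0 \hookrightarrow G_K \twoheadrightarrow G_K/\wp(G_K) \simeq \Hom(\Gamma_K,G)$ is a surjective group homomorphism. The first isomorphism therefore reduces to identifying its kernel $(G\otimes\cD^0)\cap\wp(G_K)$ with $\wp(G_{\F_q})$.

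The inclusion $\wp(G_{\F_q})\subseteq (G\otimes\cD^0)\cap\wp(G_K)$ is clear, because $G_{\F_q}\subseteq G_K$ and $\sigma$ preserves $G_{\F_q}$. For the reverse inclusion, let $m = m_0+\sum_{n\in\notp} m_n[T]^{-n}$ with $m=\wp(x)$ for some $x\in G_K$. I want to show that $m_n=0$ for all $n\in\notp$ and that $m_0\in\wp(G_{\F_q})$. Reducing to cyclic factors, it suffices to treat $G=\Z/p^k\Z$, so that $G_K=W_k(K^\perf)$, and to argue by induction on $k$.

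For $k=1$ we have $G_K = K^\perf$, and $m = m_0+\sum m_nT^{-n}$ with $m_n\in\F_q$. Suppose $m = x^p - x$ with $x\in K^\perf$.
\begin{itemize}
  \item Since $m\in K$ and $K^\perf|K$ is purely inseparable, $x$ is separable algebraic over $K$, so $x\in K$.
  \item If $x$ had a pole, $x^p$ would dominate $x^p-x$, and the leading pole order of $x^p - x$ would be divisible by $p$. The polar part of $m$ only involves exponents $n\notin p\N$, so $x$ has no pole.
  \item Then $x^p-x$ has no pole, so all $m_n$ with $n\in\notp$ vanish.
  \item Reducing modulo $T$ gives $m_0 = \bar x^p-\bar x$ with $\bar x\in\F_q$.
\end{itemize}

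For $k>1$, I would use the exact sequence $0\to W_{k-1}\xrightarrow{V} W_k\to W_1\to 0$, which is compatible with $\wp$ and with the $\cD^0$-decomposition. The Teichmüller expansion makes the first Witt component of $m$ equal to $\sum \bar m_n T^{-n}$. The $k=1$ case then kills the first components, $\bar m_n=0$ for $n\ne0$, and lets me adjust $x$ by an element of $W_k(\F_q)$. The remainder lies in $V(\cdot)$ and again has the shape $\sum (\cdot)[T]^{-n}$ via $V(a[T]^{-n})$-type terms. This is the main obstacle: one must check that after subtracting the $\F_q$-correction the remainder still lies in $V(W_{k-1}\otimes\cD^0)$ with the same index set. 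Since $V([t]) = p\cdot[t^{1/p}]$-style identities hold in $W(K^\perf)$, I expect to handle it by writing everything $W(\F_q)$-linearly in the basis $[T]^{-n}$, noting $p[T]^{-n}=V\sigma([T]^{-n})$.

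For the inertial types, $\Hom_\ur(\Gamma_K,G)$ is the image of $G_{\F_q}$: the extensions given by $\wp^{-1}$ of elements of $G_{\F_q}$ lie in $\bar\F_q\ppar{T}=K^\ur$. Conversely, by \Cref{eq:unram-ext} both $\Hom_\ur(\Gamma_K,G)$ and $G_{\F_q}/\wp(G_{\F_q})$ have cardinality $\card G$, the latter since $\wp$ on the finite group $G_{\F_q}$ has kernel $G$. This proves the final claim, and quotienting the first isomorphism by $G_{\F_q}/\wp(G_{\F_q})$ gives $\cI(K,G)\simeq (G\otimes\cD^0)/G_{\F_q}$.
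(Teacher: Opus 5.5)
\textbf{Overall.} The paper gives no argument for this proposition. It only cites \cite[Lemma~3.4~(ii) and Subsection~3.1]{wildcount} and \cite[Theorem~3~(i)]{kanesaka-sekiguchi}. Your plan is therefore a reasonable self-contained route: surjectivity from the cited lemma, the kernel $(G\otimes\cD^0)\cap\wp(G_K)=\wp(G_{\F_q})$ by d\'evissage to Artin--Schreier, then a cardinality count for $\Hom_\ur$. The reduction to cyclic $G$, the case $k=1$ and your final paragraph are correct.

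\textbf{The gap.} The inductive step $k>1$ is left open, and the check you say is needed actually fails. Subtract $c:=[x_0]\in W_k(\F_q)$. (The base case gives $x_0\in\F_q$ itself: $x_0^p-x_0\in\F_q$ makes $x_0$ algebraic over $\F_q$, and $\F_q$ is algebraically closed in $\F_q\ppar{T}$.) Write $x-c=V(y)$, $m_n=V(m_n')$ and $m_0-\wp(c)=V(m_0')$, with $m_n'\in W_{k-1}(\F_q)$. The identity $V(a)\,b=V(a\,\sigma(b))$ gives $V(m_n')[T]^{-n}=V(m_n'[T]^{-pn})$. So cancelling $V$ leaves $\wp(y)=m_0'+\sum_{n\in\notp}m_n'[T]^{-pn}$. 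The exponents have become $pn\in p\N$, so this element is not of the shape the induction hypothesis covers.

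\textbf{The fix.} The missing idea is to absorb the Frobenius twist. Since $\sigma$ is bijective on $W(\F_q)$, write instead $m_n=V\sigma(z_n)$ and $m_0-\wp(c)=V\sigma(z_0)$ with $z_n\in W_{k-1}(\F_q)$.
\begin{enumerate}
\item Then $m-\wp(c)=V\sigma(z)$ with $z:=\sum_{n\in\Onotp}z_n[T]^{-n}\in(\Z/p^{k-1}\Z)\otimes\cD^0$, keeping the original index set.
\item Since $\wp$ commutes with $V$ and $\sigma$, and $V$ is injective, $V\wp(y)=V\sigma(z)$ yields $\wp(\sigma^{-1}(y))=z$. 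Here $\sigma^{-1}(y)\in W_{k-1}(K^\perf)$ because $K^\perf$ is perfect.
\item The induction hypothesis gives $z_n=0$ for $n\in\notp$ and $z_0\in\wp(W_{k-1}(\F_q))$.
\item Hence $m_n=0$, and $m_0\in\wp(c)+V\sigma\wp(W_{k-1}(\F_q))\subseteq\wp(W_k(\F_q))$.
\end{enumerate}
Equivalently, and more cleanly, run the d\'evissage through $0\to pG\to G\to G/pG\to0$ tensored with the $p$-torsion-free ring $W(K^\perf)$, rather than through $V$. The inclusion $pG\hookrightarrow G$ then becomes $V\sigma=p$, which is $W(K^\perf)$-linear. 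It therefore preserves $\cD^0$-coefficients, and no index shift ever appears. Your remark $p[T]^{-n}=V\sigma([T]^{-n})$ points in exactly this direction, but as written the step is not carried out.
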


In other words, any given homomorphism~$\rho \in \Hom(\Gamma_K, G)$ corresponds to an element~$m$ that we can pick in~$G \otimes \cD^0$, and the choice of~$m$ is almost unique: the elements~$m_n$ for~$n \in \notp$ are uniquely determined, and together they characterize the inertial type of~$\rho$, whereas~$m_0$ is only unique modulo~$\wp(G_{\F_q})$.
Note that $G_{\F_q} = G\otimes W(\F_q)$ is a finite group.
For more details, see e.g.~\cite[Subsection~3.1]{wildcount}.

Finally, we recall the description of the last jump for abelian $p$-extensions of local function fields of characteristic~$p$ in terms of this refined parametrization:

\begin{proposition}[{{\cite[Theorem~5~(ii)]{kanesaka-sekiguchi}}}]
  \label{prop:lj-asw}
  Let~$\rho \in \Hom(\Gamma_K, G)$, and let~$m \in G \otimes \cD^0$ be such that~$\rho(\tau) = \tau(g) - g$ for any~$g \in \wp^{-1}(m)$.
  Then, writing~$m$ as
  \[
    m
    =
    m_0 +
    \sum_{n \in \notp}
      m_n
      [T]^{-n}
    \qquad
    \textnormal{with }
    m_n \in G \otimes W(\F_q)
    \textnormal{ almost all zero}
  \]
  and letting~$\mu_v(n) := \cardsuchthat{k \geq 0}{np^k < v}$, we have
  \begin{equation}
    \label{eqn:formula-lj-asw}
    \lastjump \rho
    =
    \min\suchthat{
      v \in \Z_{\geq 0}
    }{
      \forall n \in \notp,\,
      p^{\mu_{v+1}(n)} m_n = 0
    }
    \quad
    \in \Z_{\geq 0}.
  \end{equation}
\end{proposition}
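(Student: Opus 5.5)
The plan is to reduce to a single cyclic factor and a single monomial, and then combine the pieces using the ultrametric inequality \eqref{eqn:ultineq}.

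\emph{Reduction to a cyclic group.} Write $G \simeq \prod_{i=1}^r \Z/p^{n_i}\Z$, so that $\rho = (\rho_1,\dots,\rho_r)$ and $m = (m^{(1)},\dots,m^{(r)})$ componentwise. Since $\rho(\Gamma_K^v)=1$ if and only if $\rho_i(\Gamma_K^v)=1$ for all~$i$, we get $\lastjump\rho = \max_i \lastjump\rho_i$. The right-hand side of \eqref{eqn:formula-lj-asw} is also the maximum of the corresponding minima for the components, because $p^{\mu}m_n=0$ holds if and only if it holds in each component. Hence it suffices to treat $G = \Z/p^k\Z$, with $G_K = W_k(K^\perf)$ and $m_n \in W_k(\F_q)$.

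\emph{Decomposition into monomials.} The term $m_0$ corresponds to an unramified homomorphism by \Cref{prop:local-fundom}, so it does not affect the last jump. For $n \in \notp$, let $\rho^{(n)}$ be the homomorphism attached to $m_n[T]^{-n}$; then $\rho - \rho^{(0)} = \sum_{n} \rho^{(n)}$. Write $m_n = p^{s}u$ with $u \in W_k(\F_q)^\times$ and $0\le s\le k$. I claim that
\[
  \lastjump \rho^{(n)} = n p^{k-s-1}\quad\text{if } s<k,
  \qquad
  \lastjump\rho^{(n)}=0 \quad\text{if } s=k .
\]
Granting the claim, the values $np^{a}$ for distinct $n\in\notp$ and $a\ge 0$ are pairwise distinct. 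Therefore equality holds in \eqref{eqn:ultineq}, and $\lastjump\rho = \max_n \lastjump\rho^{(n)}$.

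It remains to compare with \eqref{eqn:formula-lj-asw}. The condition $p^{\mu_{v+1}(n)}m_n = 0$ means $\mu_{v+1}(n) \ge k-s$. Since $\mu_{v+1}(n) = \card{\{j\ge0 : np^j < v+1\}}$, this says exactly that $np^{k-s-1} \le v$ (or $s=k$). Taking the minimum over $v \in \Z_{\ge0}$ gives $\max_n np^{k-s-1}$, which matches.

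\emph{The monomial computation.} Multiplication by $p^s$ corresponds to the inclusion $\Z/p^{k-s}\Z \hookrightarrow \Z/p^k\Z$, and this inclusion does not change the last jump. So $\rho^{(n)}$ has the same last jump as the homomorphism $\Gamma_K\to\Z/p^{k-s}\Z$ attached to the unit multiple $u[T]^{-n}\in W_{k-s}(K^\perf)$. It therefore remains to show that a Witt vector $u[T]^{-n}$ in $W_\ell$, with $u$ a unit, $n\in\notp$ and $\ell \geq 1$, gives a character of last jump $np^{\ell-1}$.

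I expect this last step to be the main obstacle. I would prove it by induction on~$\ell$. The case $\ell=1$ is the classical Artin--Schreier computation: the extension is $y^p - y = \bar u T^{-n}$ with $p\nmid n$, whose break is~$n$. For the inductive step, I would look at the top Artin--Schreier layer of the cyclic extension of degree~$p^\ell$ over the subextension $L$ of degree~$p^{\ell-1}$. The induction hypothesis gives the upper breaks $n, np, \dots, np^{\ell-2}$ of $L|K$. Using the Witt addition polynomials, I would write the layer explicitly as an Artin--Schreier equation over $L$ and reduce its right-hand side to have $L$-valuation prime to~$p$. This yields its lower break, which I would then convert to the upper numbering via the Herbrand function of $L|K$ to obtain $np^{\ell-1}$.

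If this bookkeeping becomes unwieldy, I would instead invoke Brylinski's formula for the conductor of a reduced Witt vector. That formula directly gives the break $\max_i p^{\ell-1-i}\,(-v(x_i))$ for reduced coordinates $x_i$, and for $u[T]^{-n}$ the coordinates $x_i = u_i^{p^{-i}}\cdots T^{-n}$ yield $np^{\ell-1}$.
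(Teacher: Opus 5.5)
Your argument is correct, but it is organized differently from the paper's. After the same reduction to a cyclic group $\Z/p^N\Z$, the paper simply quotes Kanesaka--Sekiguchi's Theorem~5~(ii). That result already gives $\lastjump\rho=\max(\{0\}\cup\{np^{l_n-1} : m_n\neq0\})$, with $l_n=N-v_p(m_n)$, for an arbitrary element of the normal form $\sum_n m_n[T]^{-n}$. The only argument the paper writes out is the translation $np^{l_n-1}\le v\iff p^{\mu_{v+1}(n)}m_n=0$, which is exactly your final comparison step.

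You instead derive the maximum yourself, and three ingredients reduce everything to one unit monomial:
\begin{itemize}
\item the decomposition $\rho=\rho^{(0)}+\sum_n\rho^{(n)}$;
\item the pairwise distinctness of the numbers $np^a$, which forces equality in \eqref{eqn:ultineq};
\item the naturality of $\Hom(\Gamma_K,G)\simeq G_K/\wp(G_K)$ along $\Z/p^{k-s}\Z\hookrightarrow\Z/p^k\Z$. This is where working with $W(K^\perf)$ matters, since multiplication by $p^s$ is then injective on $W(K^\perf)/p^{k-s}$.
\end{itemize}
This gives a transparent reason for the shape of the formula, but not independence from the literature. The fact that $u[T]^{-n}$ with $u\in W_\ell(\F_q)^\times$ has last jump $np^{\ell-1}$ is precisely the nontrivial content of the cited theorem. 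Your inductive Herbrand-function argument is only sketched, so in practice you also rest on a citation.

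If you take the Brylinski (or Thomas) route, be careful about which vector is reduced. The genuine Witt coordinates of $u[T]^{-n}=\sum_i V^i[u_iT^{-np^i}]$ are $u_iT^{-np^i}$. Their valuations are divisible by $p$ for $i\geq1$, so this vector is not reduced as it stands. You need the congruence $V^i[z^{p^i}]\equiv V^i[z]$ modulo $\wp(W_\ell(K))$. It holds because $[z^{p^i}]-[z]=\wp\bigl(\sum_{j<i}[z^{p^j}]\bigr)$ and $V$ commutes with $F$ in characteristic~$p$. This lets you pass to the reduced representative $(u_i^{p^{-i}}T^{-n})_{0\le i<\ell}\in W_\ell(K)$, which lies in $W_\ell(K)$ because $\F_q$ is perfect. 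The formula then gives $\max_i p^{\ell-1-i}n=np^{\ell-1}$, attained at $i=0$ since $u_0\neq0$. Your expression ``$x_i=u_i^{p^{-i}}\cdots T^{-n}$'' points at this representative, but the congruence is the step that justifies using it.
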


\begin{proof}
  We can restrict our attention to a single cyclic factor of~$G$, i.e., we can assume that~$G = \Z/p^N\Z$, and then~%
  $
    G \otimes W(\F_q)
    = W_N(\F_q)
  $.
  In the notation of~\cite{kanesaka-sekiguchi}, for any~$n \in \notp$ such that~$m_n \neq 0$, the integer ``$l_n$'' is~$N - v_p(m_n)$ (where~$v_p$ is the $p$-adic valuation).
  \cite[Theorem~5~(ii)]{kanesaka-sekiguchi} implies the following formula for the last jump of~$\rho$ (which is zero if~$\rho$ is unramified, and one less than the conductor otherwise):
  \[
    \lastjump \rho =
    \max\leftl(
      \{0\} \cup
      \suchthat{
        n p^{l_n - 1}
      }{
        n \in \notp \\
        m_n \neq 0
      }
    \rightr).
  \]
  In other words, for any $v \geq 0$, we have $\lastjump \rho \leq v$ if and only if $n p^{l_n - 1} \leq v$ for all~$n \in \notp$ with~$m_n \neq 0$.
  Fix~$n \in \notp$.
  Note that, by definition, for any~$v, k \in \N$, we have the equivalences
  \[
    k \leq \mu_{v+1}(n) \Leftrightarrow k-1 < \mu_{v+1}(n) \Leftrightarrow n p^{k-1} < v+1 \Leftrightarrow n p^{k-1} \leq v,
  \]
  so, for any~$k \in \N$ and~$n \in \notp$:
  \begin{align*}
    n p^{l_n - 1} \leq v \iff l_n \leq \mu_{v+1}(n) & \iff v_p(m_n) \geq N - \mu_{v+1}(n)
    \\
    & \iff v_p(p^{\mu_{v+1}(n)} m_n) \geq N \iff p^{\mu_{v+1}(n)} m_n = 0.
    & \qedhere
  \end{align*}
\end{proof}

\begin{remark}
  \label{rk:elemab}
  When~$G = \F_p$, we recover a form of Artin--Schreier theory.
  We have $G_K = \F_p\otimes W(K^\perf) = K^\perf$ and $G_{\F_q} = \F_p\otimes W(\F_q) = \F_q$, so
  \[
    G \otimes \cD^0
    = \F_p \otimes \cD^0
    =
    \suchthat{
      m_0
      +
      \sum_{n \in \notp}
        m_n T^{-n}
    }{
      m_n \in \F_q \textnormal{ for all } n \in \Onotp \\
      m_n = 0 \textnormal{ for almost all } n
    }
    \subseteq \F_q\ppar{T}.
  \]
  \Cref{prop:local-fundom} then says that we have isomorphisms
  \[
    \Hom(\Gamma_K, \F_p)
    \simeq
    \bigslant{\F_p\otimes\cD^0}{\wp(\F_q)}
    \qquad
    \textnormal{ and }
    \qquad
    \cI(K, \F_p)
    \simeq
    \bigslant{\F_p\otimes\cD^0}{\F_q},
  \]
  the subgroup $\Hom_\ur(\Gamma_K, \F_p)$ of $\Hom(\Gamma_K, \F_p)$ corresponds to $\F_q/\wp(\F_q)$,
  and \Cref{eqn:formula-lj-asw} becomes (letting $v_T$ denotes the $T$-adic valuation)
  \[
    \lastjump \rho = \min\suchthat{v \in \Z_{\geq0}}{\forall n > v, m_n=0} = \max(0, -v_T(m))
  \]
  which in particular always belongs to~$\Onotp$.
\end{remark}

\section{An abstract local--global principle}
\label{sn:locglob}

In this section, let~$K := \F_q(T)$ be a rational global function field of characteristic~$p$, consider a short exact sequence
\[
  1 \longrightarrow N \longrightarrow G \overset\pi\longrightarrow Q \longrightarrow 1
\]
of finite $p$-groups with~$N \subseteq Z(G)$ and with~$Q$ abelian, and fix for each $\fp \in \cP_K$ an (additive, local) inertial height $\height_\fp \colon \Hom(\Gamma_{K_\fp}, G) \to \R_{\geq 0}$ for $G$-extensions of~$K_\fp$.
The main result of this section is the following abstract local-global principle, which will allow us to deduce \Cref{thm:intro-local-global} from \Cref{thm:minembed-inertial}:

\begin{theorem}
  \label{thm:abstract-locglob}
  Assume that for each $\fp \in \cP_K$ the inertial height~$\height_\fp$ satisfies the following property: for any $\bar\rho_1, \bar\rho_2 \in \Hom(\Gamma_{K_\fp},Q)$ with $\bar\rho_1 - \bar\rho_2 \in \Hom_\ur(\Gamma_{K_\fp}, Q)$ and for any $v \geq 0$, we have
  \begin{equation}
    \label{fact:embed-inertial}
    \cardsuchthat{
      \rho \in \Hom(\Gamma_{K_\fp}, G)
    }{
      \pi \circ \rho = \bar\rho_1 \\
      \height_\fp {(\rho)} = v
    }
    =
    \cardsuchthat{
      \rho \in \Hom(\Gamma_{K_\fp}, G)
    }{
      \pi \circ \rho = \bar\rho_2 \\
      \height_\fp {(\rho)} = v
    }
    \tag{$\star$}
  \end{equation}
  Then, for any $(v_\fp) \in \prod_{\fp \in \cP_K} \R_{\geq0}$ such that~$v_\fp = 0$ for almost all~$\fp$, we have:
  \begin{align*}
    &\frac{1}{\card G}
    \cardsuchthat{
      \rho \in \Hom(\Gamma_K, G)
    }{
      \forall \fp,\ \height_\fp \bigl(\rho\restfp\bigr) = v_\fp
    }
    \\={}&
    \prod_{\fp \in \cP_K}
      \frac{1}{\card G}
      \cardsuchthat{
        \rho_\fp \in \Hom(\Gamma_{K_\fp}, G)
      }{
        \height_\fp (\rho_\fp) = v_\fp
      },
  \end{align*}
  where the left-hand side and all the factors of the right-hand side are nonnegative integers.
\end{theorem}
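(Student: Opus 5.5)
Both sides are computed by summing over homomorphisms $\bar\rho\colon\Gamma_K\to Q$ and counting their lifts. Fix $(v_\fp)$ and let $S$ be the finite set of $\fp$ with $v_\fp\neq 0$. For each $\fp$ and each inertial type $t\in\cI(K_\fp,Q)$, define
\[
  c_\fp(t) := \cardsuchthat{\rho\in\Hom(\Gamma_{K_\fp},G)}{\pi\circ\rho=\bar\rho_\fp\\ \height_\fp(\rho)=v_\fp}
\]
for any $\bar\rho_\fp$ of type $t$; this is well defined by \eqref{fact:embed-inertial}. Since each inertial type of $Q$ contains exactly $\card Q$ homomorphisms (\Cref{eq:unram-ext}), the local factor becomes
\[
  \frac1{\card G}\cardsuchthat{\rho_\fp}{\height_\fp(\rho_\fp)=v_\fp}=\frac{\card Q}{\card G}\sum_{t}c_\fp(t)=\frac1{\card N}\sum_t c_\fp(t).
\]
When $v_\fp=0$, property~\ref{fact:height-zero} restricts the count to unramified $\rho_\fp$, of which there are $\card G$. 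So the factor is $1$, and only $t=0$ contributes, with $c_\fp(0)=\card N$.

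For the global side, we count pairs $(\bar\rho,\rho)$. By \Cref{lem:embpb-solvable}, the lifts of a global $\bar\rho$ form a torsor under $\Hom(\Gamma_K,N)$. By \Cref{lem:cft} applied to $N$, the map $\delta\mapsto(\delta\restfp)_\fp$ induces $\Hom(\Gamma_K,N)/\Hom_\ur\simeq\bigoplus_\fp\cI(K_\fp,N)$. Locally, the lifts of $\bar\rho\restfp$ form a torsor under $\Hom(\Gamma_{K_\fp},N)$, and the height condition depends only on the restriction to inertia (property~\ref{fact:inertial}).

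The key step is then the following. Fix a global lift $\rho_0$ of $\bar\rho$, so any lift is $\delta\rho_0$. The condition $\height_\fp((\delta\rho_0)\restfp)=v_\fp$ depends only on the inertial type of $\delta\restfp$, and it is a condition on $\cI(K_\fp,N)$ whose solution set $A_\fp$ has size $c_\fp(\text{type of }\bar\rho\restfp)/\card N$. Indeed, each local inertial type of $N$ has exactly $\card N$ members, and for a $p$-group $N$ twisting by an unramified homomorphism preserves the restriction to inertia. Because $\Hom(\Gamma_K,N)\to\bigoplus\cI(K_\fp,N)$ is surjective with fibers of size $\card N$, the number of good global lifts is $\card N\prod_\fp \card{A_\fp}$. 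A minor point to check here: for $\fp\notin S$, $A_\fp$ must be only the trivial type. That holds because such $\fp$ require $\rho\restfp$ unramified, forcing $\bar\rho\restfp$ unramified, and the unramified lifts of an unramified map form a single type.

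Summing over $\bar\rho$, we use \Cref{lem:cft} for $Q$: global $\bar\rho$ are in $\card Q$-to-one correspondence with tuples $(t_\fp)\in\bigoplus\cI(K_\fp,Q)$. Only tuples with $t_\fp=0$ for $\fp\notin S$ contribute. This gives
\[
  \frac1{\card G}\cdot\card Q\cdot\card N\sum_{(t_\fp)}\prod_\fp\frac{c_\fp(t_\fp)}{\card N}=\prod_{\fp}\frac1{\card N}\sum_{t}c_\fp(t),
\]
which matches the local product.

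Integrality follows from the same computation. Each $c_\fp(t)$ is a multiple of $\card N$, since it is a union of cosets of the unramified twists $\Hom_\ur(\Gamma_{K_\fp},N)$, and those twists preserve the height by property~\ref{fact:inertial}. Hence every factor is an integer, and so is the global side, which equals their product.

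The main obstacle is the bookkeeping in the key step. One must make sure that the per-place conditions on $\delta\restfp$ are genuinely conditions on inertial types. One must also make sure that restriction to $\Gamma_{K_\fp}$ is compatible with the local torsor structures, since the restriction of the nonabelian $\rho_0$ is only canonical up to conjugation, which is harmless by property~\ref{fact:conjugation}.
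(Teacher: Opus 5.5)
Your proposal is correct and follows essentially the paper's proof: your key step (lift torsor plus \Cref{lem:cft} for $N$) is the paper's \Cref{lem:fiberwise-local-global}, and your summation over tuples of local inertial types, using \Cref{lem:cft} for $Q$ together with $(\star)$, is exactly the paper's second step. One small correction: for $\fp$ with $v_\fp=0$, the set $A_\fp$ is either empty or a single type, but that type is the trivial one only when $\rho_0$ is unramified at $\fp$. This holds for almost all $\fp$, which is all the count $\card N\prod_\fp\card{A_\fp}$ needs; the paper sidesteps the issue by writing the local side as a restricted product with the unramified lifts as base points.
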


\begin{remark}
  By property \ref{fact:conjugation} of inertial heights (see \Cref{def:inertial-height}), the left-hand side is independent of the embedding $\Gamma_{K_\fp}\hookrightarrow\Gamma_K$ corresponding to the choice of a prime~$\mathfrak P$ of~$K^\sep$ above~$\fp$.
  By property \ref{fact:height-zero} and \Cref{eq:unram-ext}, for primes~$\fp$ with~$v_\fp=0$, the factor is~$1$.
  By property \ref{fact:finiteness}, each factor is finite.
  Hence, the infinite product is well-defined.
\end{remark}

We first deal with individual fibers of the surjection $\Hom(\Gamma_K, G) \twoheadrightarrow \Hom(\Gamma_K, Q)$:

\begin{lemma}
  \label{lem:fiberwise-local-global}
  Fix a $\bar\rho \in \Hom(\Gamma_K, Q)$.
  For any~$(v_\fp) \in \prod_{\fp\in\cP_K} \R_{\geq0}$ such that $v_\fp=0$ for almost all~$\fp$, we have
  \begin{align*}
    &\frac{1}{\card N}
    \cardsuchthat{
      \rho \in \Hom(\Gamma_K, G)
    }{
      \pi\circ\rho = \bar\rho \\
      \forall \fp,\ \height_\fp \bigl(\rho\restfp\bigr) = v_\fp
    }
    \\={}&
    \prod_{\fp \in \cP_K}
      \frac{1}{\card N}
      \cardsuchthat{
        \rho_\fp \in \Hom(\Gamma_{K_\fp}, G)
      }{
        \pi\circ\rho_\fp = \bar\rho\restfp \\
        \height_\fp {(\rho_\fp)} = v_\fp
      },
  \end{align*}
  where the left-hand side and all the factors of the right-hand side are nonnegative integers.
\end{lemma}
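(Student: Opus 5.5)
Fix a global lift $\rho_0 \in \Hom(\Gamma_K, G)$ of $\bar\rho$, which exists by \Cref{lem:embpb-solvable}. By the same lemma, the lifts of $\bar\rho$ are exactly the twists $\delta\cdot\rho_0$ with $\delta \in \Hom(\Gamma_K, N)$. Similarly, for each $\fp$ the local lifts of $\bar\rho\restfp$ are exactly the twists $\delta_\fp\cdot(\rho_0\restfp)$ with $\delta_\fp \in \Hom(\Gamma_{K_\fp}, N)$. Since $N$ is central, this is a simply transitive action on each fibre. The left-hand side therefore counts $\delta \in \Hom(\Gamma_K,N)$ with $\height_\fp(\delta\restfp\cdot\rho_0\restfp) = v_\fp$ for all $\fp$. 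The $\fp$-th factor on the right counts $\delta_\fp \in \Hom(\Gamma_{K_\fp},N)$ with $\height_\fp(\delta_\fp\cdot\rho_0\restfp) = v_\fp$. Both are divided by $\card N$.

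The key observation is that, by property \ref{fact:inertial}, the condition $\height_\fp(\delta_\fp\cdot\rho_0\restfp)=v_\fp$ depends only on the restriction of $\delta_\fp$ to inertia. Indeed, $\rho_0\restfp$ is fixed, and $(\delta_\fp\cdot\rho_0\restfp)|_{\Gamma^0}$ is determined by $\delta_\fp|_{\Gamma^0}$. So the local condition is a condition on the inertial type $[\delta_\fp]\in\cI(K_\fp,N)$, and it cuts out a subset $S_\fp \subseteq \cI(K_\fp,N)$. Each inertial type contains exactly $\card N$ homomorphisms by \Cref{eq:unram-ext}. Hence the $\fp$-th factor equals $\card{S_\fp}$, which is a nonnegative integer.

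The same reasoning applies globally. The global condition on $\delta$ depends only on the image of $\delta$ in $\cI(K,N)$, because twisting $\delta$ by a global unramified homomorphism twists each $\delta\restfp$ by a local unramified one. Hence the left-hand side equals the number of classes in $\cI(K,N)$ whose image under \Cref{def:alpha} lies in $\prod_\fp S_\fp$. Again we divide by $\card N = \card{\Hom_\ur(\Gamma_K,N)}$, the size of each global inertial type. By \Cref{lem:cft} applied to the abelian $p$-group $N$, the map $\cI(K,N)\to\bigoplus_\fp \cI(K_\fp,N)$ is a bijection. So the count is $\card{\bigl(\bigoplus_\fp \cI(K_\fp,N)\bigr)\cap\prod_\fp S_\fp}$.

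It remains to check that this set is exactly $\prod_\fp S_\fp$. For $\fp$ with $v_\fp=0$, I claim $S_\fp$ is a single point, the class of some $\delta_\fp$ with $\delta_\fp\cdot\rho_0\restfp$ unramified. By property \ref{fact:height-zero}, height zero forces $\delta_\fp\cdot\rho_0\restfp$ to be unramified. Two such $\delta_\fp$ differ by an unramified homomorphism, so $S_\fp$ has at most one element. It has at least one: when $\bar\rho\restfp$ is unramified, \Cref{eq:unram-lift} gives an unramified lift; when it is ramified, $S_\fp$ is empty. For almost all $\fp$, both $\bar\rho$ and $\rho_0$ are unramified at $\fp$, and then $S_\fp=\{0\}$. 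Hence $\prod_\fp S_\fp$ lies inside the direct sum, and the cardinality factors as $\prod_\fp\card{S_\fp}$, as desired.

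The main obstacle is the bookkeeping in this last step. One must make sure that each local $S_\fp$ is well defined with respect to the fixed reference lift $\rho_0\restfp$. One must also check the finitely-supported condition: a product element with $\delta_\fp$ in the trivial class at almost all primes really does lie in the direct sum. Finally, one must verify that \Cref{lem:cft} is compatible with restriction, so that the bijection matches the global conditions with the local ones.
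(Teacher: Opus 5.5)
Your proposal is correct and follows essentially the paper's argument: fix a reference lift $\rho_0$, and use property \ref{fact:inertial} to see that the height conditions depend only on the inertial types of the twisting homomorphisms $\delta$. Then identify global and local inertial types via \Cref{lem:cft}, and note that primes with $v_\fp=0$ contribute singleton factors, or empty ones if $\bar\rho$ ramifies there. The one point you leave implicit is that $S_\fp$ is finite when $v_\fp\neq0$; this is where the Northcott property \ref{fact:finiteness} is needed, as the paper uses it.
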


\begin{proof}
  We first show that the restriction map
  \begin{align*}
    &\bigslant
      {\suchthat{
        \rho \in \Hom(\Gamma_K, G)
      }{
        \pi\circ\rho=\bar\rho
      }}
      {
        \Hom_\ur(\Gamma_K, N)
      }
    \\
    \longrightarrow&
    \prod_{\fp \in \cP_K}\prim
    \Bigl(
      \bigslant
        {\suchthat{
          \rho_\fp \in \Hom(\Gamma_{K_\fp}, G)
        }{
          \pi\circ\rho_\fp=\bar\rho\restfp
        }}
        {
          \Hom_\ur(\Gamma_{K_\fp}, N)
        }
    \Bigr)
  \end{align*}
  is a bijection (the restricted product means that~$\rho_\fp$ must be unramified for almost all~$\fp$).
  Indeed, use \Cref{lem:embpb-solvable} to pick any~$\rho_0 \in \Hom(\Gamma_K, G)$ such that~$\bar\rho = \pi\circ\rho_0$.
  The other lifts of~$\bar\rho$ (resp.~of $\bar\rho\restfp$) are the twists of~$\rho_0$ (resp.~of $\rho_0\restfp$) by elements $\delta \in \Hom(\Gamma_K, N)$ (resp.~$\delta_\fp \in \Hom(\Gamma_{K_\fp}, N)$).
  Via this description, the claim boils down to checking that the restriction map $\cI(K, N) \to \bigoplus_\fp \cI(K_\fp, N)$ is a bijection, which is exactly \Cref{lem:cft}.

  By property~\ref{fact:inertial} of inertial heights, $\height_\fp$ is constant on each $\Hom_\ur(\Gamma_K, N)$-orbit (resp.~on each $\Hom_\ur(\Gamma_{K_\fp}, N)$-orbit), so the bijection above (the restriction map) induces a bijection
  \begin{align*}
    &\bigslant
      {\suchthat{
        \rho \in \Hom(\Gamma_K, G)
      }{
        \pi\circ\rho=\bar\rho \\
        \forall \fp,\ \height_\fp \bigl(\rho\restfp\bigr) = v_\fp
      }}
      {
        \Hom_\ur(\Gamma_K, N)
      }
    \\
    \simeq&
    \prod_{\fp \in \cP_K}\prim
    \leftl(
      \bigslant
        {\suchthat{
          \rho_\fp \in \Hom(\Gamma_{K_\fp}, G)
        }{
          \pi\circ\rho_\fp=\bar\rho\restfp \\
          \height_\fp {(\rho_\fp)} = v_\fp
        }}
        {
          \Hom_\ur(\Gamma_{K_\fp}, N)
        }
    \rightr).
  \end{align*}

  If there is a~$\fp \in \cP_K$ with $v_\fp = 0$ such that~$\bar\rho$ is ramified at~$\fp$, then the condition~$\height_\fp \bigl(\rho\restfp\bigr) = v_\fp$ (resp.~$\height_\fp {(\rho_\fp)} = v_\fp$) is impossible for lifts of~$\bar\rho$ (resp.~of $\bar\rho\restfp$), so both sides are empty and the result is obvious.
  Further, we assume that~$\bar\rho$ is unramified at all primes $\fp \in \cP_K$ with $v_\fp = 0$.

  In the restricted product, the primes~$\fp$ with~$v_\fp=0$ can then be ignored: the corresponding~$\rho_\fp$ are the unramified lifts of~$\bar\rho\restfp$ (which is unramified) by property~\ref{fact:height-zero} of inertial heights, hence a single $\Hom_\ur(\Gamma_{K_\fp},N)$-orbit.
  Hence, ignoring the factors which are singletons, the restricted product is simply a product over the finitely many primes~$\fp \in \cP_K$ with $v_\fp \neq 0$, for which the corresponding factor is a finite set by the Northcott property.
  Hence, we have a bijection between two finite sets.
  The equality between their sizes is precisely the desired equality (the groups $\Hom_\ur(\Gamma_K, N)$ and $\Hom_\ur(\Gamma_{K_\fp}, N)$ have size~$\card N$ by \Cref{eq:unram-ext}, and the twisting action is free).
  The left-hand side and the factors of the right-hand side are nonnegative integers because they are counts of $\Hom_\ur(\Gamma_K, N)$-orbits and $\Hom_\ur(\Gamma_{K_\fp}, N)$-orbits, respectively.
\end{proof}

\begin{proof}[Proof of \Cref{thm:abstract-locglob}]
  Splitting up the count into fibers, the left-hand side becomes
  \begin{align*}
    &
    \frac{1}{\card G}
    \sum_{
      \bar\rho \in \Hom(\Gamma_K, Q)
    }
      \;
      \cardsuchthat{
        \rho \in \Hom(\Gamma_K, G)
      }{
        \pi\circ\rho = \bar\rho\\
        \forall \fp,\ \height_\fp \bigl(\rho\restfp\bigr) = v_\fp
      }
    \\ ={} &
    \frac{1}{\card Q}
    \sum_{
      \bar\rho \in \Hom(\Gamma_K, Q)
    }
      \,\,
      \prod_{\fp \in \cP_K}
        \frac{1}{\card N}
        \cardsuchthat{
          \rho_\fp \in \Hom(\Gamma_{K_\fp}, G)
        }{
          \pi\circ\rho_\fp = \bar\rho\restfp \\
          \height_\fp (\rho_\fp) = v_\fp
        }
    &\textnormal{by \Cref{lem:fiberwise-local-global}}
  \end{align*}
  By the additional assumption~(\ref{fact:embed-inertial}) of \Cref{thm:abstract-locglob}, each summand depends only on the collection of inertial types $\bigl([\bar\rho\restfp]\bigr) \in \bigoplus_\fp \cI(K_\fp, Q)$, and hence only on the inertial type $[\bar\rho] \in \cI(K,Q)$ of~$\bar\rho$.
  Moreover, each inertial type contains exactly~$\card Q$ homomorphisms by \Cref{eq:unram-ext}, so the left-hand side equals the following sum over inertial types:
  \begin{align*}
    &\sum_{
      [\bar\rho] \in \cI(K, Q)
    }
      \,\,
      \prod_{\fp \in \cP_K}
        \frac{1}{\card N}
        \cardsuchthat{
          \rho_\fp \in \Hom(\Gamma_{K_\fp}, G)
        }{
          \pi\circ\rho_\fp = \bar\rho\restfp \\
          \height_\fp (\rho_\fp) = v_\fp
        }
    \\={}&
    \prod_{\fp \in \cP_K}
      \,\,
      \sum_{
        [\bar\rho_\fp] \in \cI(K_\fp, Q)
      }
          \frac{1}{\card N}
          \cardsuchthat{
            \rho_\fp \in \Hom(\Gamma_{K_\fp}, G)
          }{
            \pi\circ\rho_\fp = \bar\rho_\fp \\
            \height_\fp (\rho_\fp) = v_\fp
          }
      &\textnormal{by \Cref{lem:cft}}
  \end{align*}
  where the right-hand side again makes sense because of the assumption~(\ref{fact:embed-inertial}).
  By \Cref{lem:fiberwise-local-global}, each summand of each factor is a nonnegative integer (and hence, so are the factors).
  Finally, we obtain
  \[
    \prod_{\fp \in \cP_K}
      \,\,
      \frac1{\card Q}
      \sum_{
        \bar\rho_\fp \in \Hom(\Gamma_{K_\fp}, Q)
      }
          \frac{1}{\card N}
          \cardsuchthat{
            \rho_\fp \in \Hom(\Gamma_{K_\fp}, G)
          }{
            \pi\circ\rho_\fp = \bar\rho_\fp \\
            \height_\fp (\rho_\fp) = v_\fp
          }
  \]
  which is the right-hand side of \Cref{thm:abstract-locglob} split into fibers.
\end{proof}

\section{Lifting unramified twists}
\label{sn:minembed-urtwist}

In this section, we fix a local function field~$K = \F_q\ppar{T}$ of characteristic~$p$ and a short exact sequence
\[
  1 \longrightarrow N \longrightarrow G \stackrel\pi\longrightarrow Q \longrightarrow 1
\]
of finite $p$-groups with~$N \subseteq Z(G)$ and~$Q$ abelian.
Like in \Cref{subsn:asw}, seeing~$N$ and~$Q$ as finite $\Z_p$-modules, we define the $W(K^\perf)$-modules $N_K := N \otimes_{\Z_p} W(K^\perf)$ and $Q_K := Q \otimes_{\Z_p} W(K^\perf)$, and similarly the $W(K^\alg)$-modules~$N_{K^\sep} := N \otimes W(K^\alg)$ and~$Q_{K^\sep} := Q \otimes W(K^\alg)$.

Our goal is to prove \Cref{thm:minembed-inertial}.
In \Cref{subsn:minlift}, we define the \emph{minimal lift height}.
In \Cref{subsn:commutators}, we extend the commutator bracket into an alternating $W(K^\alg)$-bilinear map $[-,-] \colon Q_{K^\sep}^2 \to N_{K^\sep}$.
Finally, in \Cref{subsn:proof-minembed-inertial}, we carry out the main construction of this article (\Cref{lem:epsilon-minembed,lem:explicit-epsilon}), allowing us to prove \Cref{thm:minembed-inertial}.

For any $\bar\rho \in \Hom(\Gamma_K, Q)$ (resp.~$\bar\rho \in \Hom(\Gamma_{K_\fp}, Q)$), a \emph{lift of~$\bar\rho$} will always mean a homomorphism $\rho \in \Hom(\Gamma_K, G)$ (resp.~$\rho \in \Hom(\Gamma_{K_\fp}, G)$) such that $\bar\rho = \pi\circ\rho$.
By \Cref{lem:embpb-solvable}, any~$\bar\rho$ admits a lift, and all lifts are obtained by twisting a fixed lift by homomorphisms valued in~$N$.

In this section, if~$\varepsilon \colon \Gamma_K \to N$ is a continuous map, we let $\partial \varepsilon \colon \Gamma_K^2 \to N$ be the $2$-coboundary
\[
  \partial \varepsilon(\tau_1, \tau_2)
  :=
  \varepsilon(\tau_1) + \varepsilon(\tau_2) - \varepsilon(\tau_1 \tau_2)
\]
so that~$\partial \varepsilon = 0$ if and only if~$\varepsilon \in \Hom(\Gamma_K, N)$.

\subsection{Minimal lifts}
\label{subsn:minlift}

Fix a homomorphism~$\bar\rho \in \Hom(\Gamma_K, Q)$.

\begin{definition}
    The \emph{minimal lift height of~$\bar\rho$} (with respect to the surjection $\pi \colon G \twoheadrightarrow Q$) is
    \[
        \minlift {\bar\rho}
        :=
        \min\suchthat{
          \lastjump \rho
        }{
          \rho \in \Hom(\Gamma_K, G)\\
          \pi\circ\rho = \bar\rho
        }
        \in \Q_{\geq 0}.
    \]
\end{definition}

The key to proving \Cref{thm:minembed-inertial} will be to show that $\minlift \bar\rho$ depends only on the inertial type of~$\bar\rho$.

\begin{proposition}
  \label{prop:trivial-bound}
  We have $\minlift\bar\rho \geq \lastjump\bar\rho$.
  If $\lastjump\bar\rho = 0$, then $\minlift\bar\rho = 0$.
\end{proposition}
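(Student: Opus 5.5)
The first claim follows because $\pi$ is a group homomorphism and commutes with restriction to ramification subgroups. Let $\rho$ be any lift of $\bar\rho$ and let $v \geq 0$ satisfy $\rho(\Gamma_K^v) = 1$. Then
\[
  \bar\rho(\Gamma_K^v) = \pi\bigl(\rho(\Gamma_K^v)\bigr) = 1.
\]
Hence the set of $v$ in the infimum defining $\lastjump\rho$ is contained in the set defining $\lastjump\bar\rho$, which gives $\lastjump\bar\rho \leq \lastjump\rho$. Taking the minimum over all lifts $\rho$ (which exist by \Cref{lem:embpb-solvable}) yields $\minlift\bar\rho \geq \lastjump\bar\rho$.

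For the second claim, suppose $\lastjump\bar\rho = 0$. By the discussion in \Cref{subsn:last-jump}, $\bar\rho$ is then trivial on $\Gamma_K^1$. Since $Q$ is a $p$-group and the tame quotient $\Gamma_K^0/\Gamma_K^1$ is pro-prime-to-$p$, $\bar\rho$ is trivial on $\Gamma_K^0$, so $\bar\rho$ is unramified. By the surjectivity \Cref{eq:unram-lift} applied to $G \twoheadrightarrow G/N \simeq Q$, there is an unramified $\rho \in \Hom_\ur(\Gamma_K, G)$ with $\pi\circ\rho = \bar\rho$. This lift is trivial on inertia, so $\lastjump\rho = 0$, giving $\minlift\bar\rho \leq 0$. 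Combined with the first claim (or simply with nonnegativity), $\minlift\bar\rho = 0$.

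No step is a real obstacle. The only point needing care is the passage from ``tamely ramified'' to ``unramified'' for $p$-groups in residue characteristic $p$, and the paper has already stated this in \Cref{subsn:last-jump}.
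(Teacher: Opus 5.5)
Your proof is correct and follows the paper's argument: the inequality comes from $\bar\rho(\Gamma_K^v)=\pi(\rho(\Gamma_K^v))$, and the second claim from $\bar\rho$ being unramified and therefore having an unramified lift by \Cref{eq:unram-lift}. One small nitpick, which the paper's own notation section shares: in Serre's upper numbering, wild inertia is the closure of $\bigcup_{v>0}\Gamma_K^v$ rather than $\Gamma_K^1$, so it is cleaner to argue that $\lastjump\bar\rho=0$ gives $\bar\rho(\Gamma_K^v)=1$ for all $v>0$, hence $\bar\rho$ kills wild inertia, and then use that tame inertia is pro-prime-to-$p$.
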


\begin{proof}
  For any lift~$\rho \in \Hom(\Gamma_K, G)$ of~$\bar\rho$ and for any $v \geq 0$, if~$\rho(\Gamma_K^v) = 1$, then $\bar\rho(\Gamma_K^v)=\pi(1)=1$.
  Therefore, $\minlift\bar\rho \geq \lastjump\bar\rho$.

  If $\lastjump\bar\rho = 0$, then~$\bar\rho$ is unramified, so it has an unramified lift $\rho$ by \Cref{eq:unram-lift}, which satisfies $\lastjump\rho=0$.
\end{proof}

We say that~$\rho$ is a \emph{minimal lift} of~$\bar\rho$ if $\lastjump \rho = \minlift {\bar\rho}$.
The following lemma states some special properties of minimal lifts:

\begin{lemma}
  \label{lem:prop-min-lift}
  Let~$\rho$ be a lift of~$\bar\rho$.
  \begin{enumroman}
    \item
      \label{lj-twists-min}
      If~$\rho$ is a minimal lift, then for any $\delta \in \Hom(\Gamma_K, N)$ we have
      \[
        \lastjump {(\delta \cdot \rho)}
        =
        \max\leftl(
          \lastjump \delta, \,
          \lastjump \rho
        \rightr).
      \]
    \item
      \label{not-integral-is-minimal}
      If $\lastjump \rho$ is not an integer, or if $N$ is elementary abelian and $\lastjump \rho$ is a multiple of~$p$, then~$\rho$ is a minimal lift.
  \end{enumroman}
\end{lemma}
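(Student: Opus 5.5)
For \ref{lj-twists-min}, I will prove the two inequalities separately. The upper bound $\lastjump(\delta\cdot\rho)\leq\max(\lastjump\delta,\lastjump\rho)$ is exactly the ultrametric inequality \Cref{eqn:ultineq}, since pointwise products of continuous maps sending $1$ to $1$ are covered there. For the reverse inequality, assume $\lastjump(\delta\cdot\rho) < \max(\lastjump\delta,\lastjump\rho)$.

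\begin{enumroman}
\item If $\lastjump\delta\neq\lastjump\rho$, the equality case of \Cref{eqn:ultineq} already gives a contradiction.
\item So assume $\lastjump\delta=\lastjump\rho=:v$ and $\lastjump(\delta\cdot\rho)<v$. The map $\delta\cdot\rho$ is again a lift of $\bar\rho$, because $\delta$ takes values in $N=\ker\pi$ and $N$ is central, so $\delta\cdot\rho$ is a homomorphism. Its last jump is strictly smaller than $v=\lastjump\rho=\minlift\bar\rho$, which contradicts minimality.
\end{enumroman}

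For \ref{not-integral-is-minimal}, the plan is to compare $\rho$ with a minimal lift $\rho_0$ of $\bar\rho$, which exists since there are finitely many lifts by \Cref{lem:embpb-solvable}. Write $\rho=\delta\cdot\rho_0$ with $\delta\in\Hom(\Gamma_K,N)$. By \ref{lj-twists-min},
\[
  \lastjump\rho=\max(\lastjump\delta,\lastjump\rho_0).
\]
If $\lastjump\rho>\lastjump\rho_0$, then $\lastjump\rho=\lastjump\delta$. Since $N$ is abelian, \Cref{prop:lj-asw} shows that $\lastjump\delta$ is an integer.

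\begin{enumroman}
\item In the first case, where $\lastjump\rho$ is not an integer, this is impossible. Hence $\lastjump\rho=\lastjump\rho_0=\minlift\bar\rho$.
\item In the second case, $N$ is elementary abelian, so $N\simeq\F_p^r$. By \Cref{rk:elemab} applied to each coordinate, $\lastjump\delta$ is a maximum of values in $\Onotp$, hence lies in $\Onotp$. If $\lastjump\rho$ is a multiple of $p$ and $\lastjump\rho=\lastjump\delta$, then $\lastjump\rho=0$. But $0=\lastjump\rho>\lastjump\rho_0\geq 0$ is a contradiction.
\end{enumroman}

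In both cases $\rho$ is a minimal lift.

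I do not expect any of these steps to be a real obstacle. The only point needing care is the integrality claim for $\lastjump\delta$ and the fact that it lies in $\Onotp$ in the elementary abelian case. This follows from \Cref{prop:lj-asw} and \Cref{rk:elemab} together with the observation that the last jump of a map into $\F_p^r$ is the maximum over its coordinates.
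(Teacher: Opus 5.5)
Your proof is the paper's argument for both parts: in (i) the ultrametric inequality plus minimality of $\rho$ when $\lastjump\delta=\lastjump\rho$, and in (ii) writing $\rho=\delta\cdot\rho_0$ with $\rho_0$ minimal and using that $\lastjump\delta$ is an integer (resp.\ lies in $\Onotp$). You also treat the edge case $\lastjump\rho=0$ more explicitly than the paper does.

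One justification needs fixing. A minimal lift does not exist because there are finitely many lifts: the lifts form a torsor under $\Hom(\Gamma_K,N)$, which is infinite for $K=\F_q\ppar{T}$ (e.g.\ $\Hom(\Gamma_K,\F_p)\simeq K/\wp(K)$). Invoke instead the Northcott property of $\lastjump$ (only finitely many $\rho$ have $\lastjump\rho\leq v$). That property is what guarantees the minimum in the definition of $\minlift\bar\rho$ is attained.
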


\begin{proof}\
  \begin{enumroman}
  \item
  If $\lastjump \rho \neq \lastjump \delta$, then this directly follows from \Cref{eqn:ultineq}, so we assume that $\lastjump \delta = \lastjump \rho$.
  \Cref{eqn:ultineq} then implies that $\lastjump {(\delta\cdot\rho)} \leq \lastjump\rho$.
  Since $\delta\cdot\rho$ is a lift of $\bar\rho$, we have by definition of the minimal lift height that $\lastjump {(\delta\cdot\rho)} \geq \minlift{\bar\rho} = \lastjump \rho$.
  
  \item
  Write $\rho = \delta \cdot \rho_0$ for some minimal lift~$\rho_0$ and some~$\delta \in \Hom(\Gamma_K, N)$ (cf.~\Cref{lem:embpb-solvable}).
  If~$\rho$ is not minimal, then by \ref{lj-twists-min} we have $\lastjump \rho = \lastjump \delta$.
  However, since~$N$ is abelian (resp.~elementary abelian), the last jump of~$\delta$ can only be an integer (resp.~belong to $\Onotp$), cf.~\Cref{prop:lj-asw} (resp.~\Cref{rk:elemab}).
  \qedhere
  \end{enumroman}
\end{proof}

The following remark, about cohomological obstructions to embedding problems with restricted last jump, will not be used in the paper:

\begin{remark}
  We can interpret the question of determining~$\minlift \bar\rho$ as the study of embedding problems not for representations of~$\Gamma_K$ (for which there is no obstruction by \Cref{lem:embpb-solvable}), but for representations of the quotients~$\Gamma_K/\Gamma_K^v$.
  For any~$v \geq 0$, the goal is then to say something about the image of the projection map $H^1(\Gamma_K/\Gamma_K^v, G) \to H^1(\Gamma_K/\Gamma_K^v, Q)$.
  By \cite[Chap.~VII, Annexe, Prop.~2]{serrecl}, we have an exact sequence of pointed sets:
  \[
    H^1(\Gamma_K/\Gamma_K^v, G)
    \overset{\pi\circ-}\longrightarrow
    H^1(\Gamma_K/\Gamma_K^v, Q)
    \overset{\Delta}\longrightarrow
    H^2(\Gamma_K/\Gamma_K^v, N).
  \]
  Hence, a given~$\bar\rho \in \Hom(\Gamma_K, Q)$ with $\lastjump \bar\rho < v$ satisfies $\minlift \bar\rho < v$ if and only if~$\Delta$ vanishes at the corresponding element of~$H^1(\Gamma_K/\Gamma_K^v, Q)$.
  More concretely, pick a $2$-cocycle $c \colon Q^2 \to N$ representing the class $[c] \in H^2(Q,N)$ associated to the extension~$G$, and let $\Delta_c(\bar\rho)(\tau_1, \tau_2) := c\bigl(\bar\rho(\tau_1), \bar\rho(\tau_2)\bigr)$.
  We know that the $2$-cocycle~$\Delta_c(\bar\rho)$ is cohomologically trivial in $H^2(\Gamma_K, N) = 0$, so there is a map $\varepsilon \colon \Gamma_K \to N$ such that $\Delta_c(\bar\rho) = \partial\varepsilon$, and the question is to determine whether we can choose~$\varepsilon$ such that it factors through~$\Gamma_K/\Gamma_K^v$.
  In other words, we have the following formula for the minimal lift height:
  \[
    \minlift {\bar\rho}
    =
    \max\leftl(
      \lastjump \bar\rho, \;
      \min
      \suchthat{
        \lastjump \varepsilon
      }{
        \varepsilon \colon \Gamma_K \to N \\
        \partial \varepsilon = \Delta_c(\bar\rho)
      }
    \rightr).
  \]
  For example, if~$G$ is the Heisenberg group~$H_3(\F_p)$, fitting in a short exact sequence $1 \to \F_p \to H_3(\F_p) \to \F_p^2 \to 1$, then the map $\Delta \colon H^1(\Gamma_K/\Gamma_K^v, \F_p^2) \to H^2(\Gamma_K/\Gamma_K^v, \F_p)$, via the identification $H^1(\Gamma_K/\Gamma_K^v, \F_p^2) \simeq H^1(\Gamma_K/\Gamma_K^v, \F_p)^2$, is given by the cup-product, so studying the minimal lift height for the surjection~$H_3(\F_p) \twoheadrightarrow \F_p^2$ amounts to studying the vanishing of cup-products in the cohomology of~$\Gamma_K/\Gamma_K^v$.
  Similarly, for the group~$U_4(\F_p)$ of unipotent $4\times 4$ upper-triangular matrices over~$\F_p$ (of nilpotency class~$3$), studying the minimal lift height for the surjection~$U_4(\F_p) \twoheadrightarrow \F_p^3$ (reading the coefficients just above the diagonal) amounts to studying the vanishing of triple Massey products in the cohomology of~$\Gamma_K/\Gamma_K^v$.
\end{remark}

\begin{remark}
  When $p \geq 3$, we can deduce explicit equations characterizing $\minlift \bar\rho$ from \cite[Theorem~3.20]{wildcount}.
  We will not use these characterizations in this paper.
\end{remark}

\subsection{Commutators}
\label{subsn:commutators}

The commutator map $G^2 \to G$, $(x,y) \mapsto xyx^{-1}y^{-1}$ takes values in~$N$ because~$Q$ is abelian, and factors through the surjection~$\pi \colon G^2 \twoheadrightarrow Q^2$ as~$N \subseteq Z(G)$, so we see it as a map $[-,-] \colon Q^2 \to N$.
This map is alternating ($[x,x]=0$ for any~$x \in Q$) and $\Z_p$-bilinear, i.e., biadditive.
For instance, if~$x,y,z \in G$, then
\[
  [xy,z]
  =
  xyzy^{-1}x^{-1}z^{-1}
  =
  x\underbrace{[y,z]}_{\mathclap{\in Z(G)}}zx^{-1}z^{-1}
  =
  xzx^{-1}z^{-1}[y,z]
  =
  [x,z][y,z],
\]
and additivity on the right is analogous.
Thus, it is possible to bilinearly extend the map~$[-,-]$ into a continuous $W(K^\alg)$-bilinear alternating map $[-,-] \colon Q_{K^\sep}^2 \to N_{K^\sep}$.

\subsection{Non-abelian twisting.}
\label{subsn:proof-minembed-inertial}

Consider two homomorphisms~$\rho, \delta \in \Hom(\Gamma_K, G)$.
Let~$\bar\rho := \pi\circ\rho$ and $\bar\delta := \pi\circ\delta$ be the induced homomorphisms $\Gamma_K \to Q$.
Since~$Q$ is abelian, the sum~$\bar\delta + \bar\rho$ is a group homomorphism, and an obvious continuous map~$\Gamma_K \to G$ lifting~$\bar\delta + \bar\rho$ is the pointwise product~$\delta \cdot \rho$, which is generally not a homomorphism.
However, by \Cref{lem:embpb-solvable}, we know that there is a continuous homomorphism $\rho' \in \Hom(\Gamma_K, G)$ lifting $\bar\delta + \bar\rho$, i.e., there is a continuous map~$\varepsilon \colon \Gamma_K \to N$ such that $\varepsilon \cdot \delta \cdot \rho$ is a homomorphism (take $\varepsilon := \rho' \cdot \rho^{-1} \cdot \delta^{-1}$).

\begin{lemma}
  \label{lem:epsilon-minembed}
  Let~$\varepsilon \colon \Gamma_K \to N$ be a continuous map.
  Then, the map~$\varepsilon \cdot \delta \cdot \rho$ is a homomorphism if and only if the equality $\partial \varepsilon(\tau_1, \tau_2) = [\bar\delta(\tau_2), \bar\rho(\tau_1)]$ holds in $N$ for all $\tau_1,\tau_2\in\Gamma_K$.
  In particular, this condition on~$\varepsilon$ depends only on the reductions~$\bar\delta$ and~$\bar\rho$.
\end{lemma}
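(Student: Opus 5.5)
The plan is a direct computation: expand $(\varepsilon\delta\rho)(\tau_1)\cdot(\varepsilon\delta\rho)(\tau_2)$ and compare it with $(\varepsilon\delta\rho)(\tau_1\tau_2)$, using that $N$ is central and that $\rho,\delta$ are homomorphisms. Write $\phi := \varepsilon\cdot\delta\cdot\rho$. Since $\varepsilon$ takes values in $N \subseteq Z(G)$, the factors $\varepsilon(\tau_i)$ can be moved freely, so
\[
  \phi(\tau_1)\phi(\tau_2)
  =
  \varepsilon(\tau_1)\varepsilon(\tau_2)\cdot
  \delta(\tau_1)\rho(\tau_1)\delta(\tau_2)\rho(\tau_2).
\]
On the other side, $\phi(\tau_1\tau_2) = \varepsilon(\tau_1\tau_2)\,\delta(\tau_1)\delta(\tau_2)\rho(\tau_1)\rho(\tau_2)$ because $\delta$ and $\rho$ are homomorphisms.

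The key step is to rewrite the middle product: $\rho(\tau_1)\delta(\tau_2) = [\rho(\tau_1),\delta(\tau_2)]\,\delta(\tau_2)\rho(\tau_1)$, where the commutator $xyx^{-1}y^{-1}$ lies in $N$ and is central. Hence
\[
  \delta(\tau_1)\rho(\tau_1)\delta(\tau_2)\rho(\tau_2)
  = [\rho(\tau_1),\delta(\tau_2)]\cdot \delta(\tau_1)\delta(\tau_2)\rho(\tau_1)\rho(\tau_2).
\]
By \Cref{subsn:commutators}, this commutator depends only on the images in $Q$, so it equals $[\bar\rho(\tau_1),\bar\delta(\tau_2)] = -[\bar\delta(\tau_2),\bar\rho(\tau_1)]$ by alternation and biadditivity (written additively in $N$).

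Cancelling the common invertible factor $\delta(\tau_1)\delta(\tau_2)\rho(\tau_1)\rho(\tau_2)$, the identity $\phi(\tau_1)\phi(\tau_2)=\phi(\tau_1\tau_2)$ becomes, in additive notation in $N$,
\[
  \varepsilon(\tau_1)+\varepsilon(\tau_2) - [\bar\delta(\tau_2),\bar\rho(\tau_1)] = \varepsilon(\tau_1\tau_2),
\]
which is exactly $\partial\varepsilon(\tau_1,\tau_2) = [\bar\delta(\tau_2),\bar\rho(\tau_1)]$. Since $\phi$ is continuous, being a homomorphism is equivalent to this identity holding for all $\tau_1,\tau_2$, which gives both directions. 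The ``in particular'' follows because the right-hand side involves only $\bar\delta$ and $\bar\rho$.

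There is no real obstacle; the only point requiring care is the order and sign convention of the commutator, which determines whether the right-hand side is $[\bar\delta(\tau_2),\bar\rho(\tau_1)]$ or its negative.
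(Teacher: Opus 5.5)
Your proposal is correct and is essentially the paper's proof: a direct expansion that uses the centrality of $N$ and the identity $xy=[x,y]\,yx$ to swap $\delta(\tau_2)$ and $\rho(\tau_1)$, and your sign bookkeeping correctly produces $[\bar\delta(\tau_2),\bar\rho(\tau_1)]$. The only difference is cosmetic: the paper does the swap inside $(\varepsilon\cdot\delta\cdot\rho)(\tau_1\tau_2)$, obtaining $(\varepsilon\cdot\delta\cdot\rho)(\tau_1\tau_2)=\partial\varepsilon(\tau_1,\tau_2)^{-1}[\bar\delta(\tau_2),\bar\rho(\tau_1)]\,(\varepsilon\cdot\delta\cdot\rho)(\tau_1)\,(\varepsilon\cdot\delta\cdot\rho)(\tau_2)$, whereas you do it inside the product $\phi(\tau_1)\phi(\tau_2)$.
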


\begin{proof}
  This follows from the following computation, for any~$\tau_1, \tau_2 \in \Gamma_K$:
  \begin{align*}
    (\varepsilon \cdot \delta \cdot \rho)(\tau_1 \tau_2)
    &=
    \varepsilon(\tau_1 \tau_2)
    \delta(\tau_1 \tau_2)
    \rho(\tau_1 \tau_2)
    \\
    &=
    \varepsilon(\tau_1)
    \varepsilon(\tau_2)
    \partial\varepsilon(\tau_1, \tau_2)^{-1}
    \delta(\tau_1)
    \delta(\tau_2)
    \rho(\tau_1)
    \rho(\tau_2)
    \\
    &=
    \underbrace{\varepsilon(\tau_1)}_{{\in Z(G)}}\,
    \underbrace{\varepsilon(\tau_2)}_{{\in Z(G)}}\,
    \underbrace{\partial\varepsilon(\tau_1, \tau_2)^{-1}}_{{\in Z(G)}}\,
    \delta(\tau_1)
    \underbrace{[\bar\delta(\tau_2),\bar\rho(\tau_1)]}_{{\in Z(G)}}
    \rho(\tau_1)
    \delta(\tau_2)
    \rho(\tau_2)
    \\
    &=
    \partial\varepsilon(\tau_1, \tau_2)^{-1}
    [\bar\delta(\tau_2),\bar\rho(\tau_1)]
    \cdot
    (\varepsilon \cdot \delta \cdot \rho)(\tau_1)
    \cdot
    (\varepsilon \cdot \delta \cdot \rho)(\tau_2).
  \qedhere
  \end{align*}
\end{proof}

By Artin--Schreier--Witt theory (cf.~\Cref{subsn:asw}), we fix elements~$g_{\bar\rho} ,g_{\bar\delta} \in Q_{K^\sep}$ such that $\bar\rho(\tau) = \tau(g_{\bar\rho}) - g_{\bar\rho}$ and $\bar\delta(\tau) = \tau(g_{\bar\delta}) - g_{\bar\delta}$ for all $\tau\in\Gamma_K$.
The elements~$m_{\bar\rho} := \wp(g_{\bar\rho})$ and~$m_{\bar\delta} := \wp(g_{\bar\delta})$ both belong to~$Q \otimes \cD^0 \subseteq Q_K$.

\begin{lemma}
    \label{lem:explicit-epsilon}
    Let $m_\varepsilon := [m_{\bar\rho}, g_{\bar\delta}] \in N_{K^\sep}$, let~$g_\varepsilon \in N_{K^\sep}$ be such that~$\wp(g_\varepsilon) = m_\varepsilon$, and define the continuous map~$\varepsilon \colon \Gamma_K \to N_{K^\sep}$ by
    \[
      \varepsilon(\tau) := \tau(g_\varepsilon) - g_\varepsilon + [\bar\delta(\tau), g_{\bar\rho}].
    \]
    Then:
    \begin{enumerate}[label=(\roman*)]
      \item
        \label{eps-in-N}
        For any~$\tau \in \Gamma_K$, we have~$\varepsilon(\tau) \in N$.
      \item
        \label{partial-eps}
        For any~$\tau_1, \tau_2 \in \Gamma_K$, we have $\partial \varepsilon(\tau_1, \tau_2) = [\bar\delta(\tau_2), \bar\rho(\tau_1)]$.
      \item
        \label{lastjump-eps}
        If~$\bar\delta \in \Hom_\ur(\Gamma_K, Q)$, then $\lastjump \varepsilon \leq \lastjump \bar\rho$.
    \end{enumerate}
\end{lemma}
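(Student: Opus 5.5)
The plan is to check the three claims by direct computation in the $W(K^\alg)$-modules $Q_{K^\sep}$ and $N_{K^\sep}$. The tools are the $W(K^\alg)$-bilinearity and alternating property of $[-,-]$ from \Cref{subsn:commutators}, and the fact that $\sigma$ and every $\tau\in\Gamma_K$ act compatibly with $[-,-]$, since $[-,-]$ is extended by scalars from a map $Q^2\to N$.

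For \ref{eps-in-N}, I would show that $\wp(\varepsilon(\tau))=0$, since $\ker\wp=N$. The two summands are handled separately.
\begin{itemize}
\item For the first summand, $\wp$ commutes with $\tau$, so $\wp(\tau(g_\varepsilon)-g_\varepsilon)=\tau(m_\varepsilon)-m_\varepsilon$. Because $m_{\bar\rho}\in Q_K$ is $\tau$-fixed, this equals $[m_{\bar\rho},\tau(g_{\bar\delta})-g_{\bar\delta}]=[m_{\bar\rho},\bar\delta(\tau)]$.
\item For the second summand, $\bar\delta(\tau)\in Q$ is $\sigma$-fixed, so $\wp([\bar\delta(\tau),g_{\bar\rho}])=[\bar\delta(\tau),\sigma(g_{\bar\rho})-g_{\bar\rho}]=[\bar\delta(\tau),m_{\bar\rho}]$.
\end{itemize}
The two contributions sum to $0$ because the bracket is alternating, hence antisymmetric.

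For \ref{partial-eps}, I would expand $\varepsilon(\tau_1\tau_2)$. Writing $\tau_1\tau_2(g_\varepsilon)-g_\varepsilon=\tau_1\bigl(\tau_2(g_\varepsilon)-g_\varepsilon\bigr)+\tau_1(g_\varepsilon)-g_\varepsilon$ and using additivity of $\bar\delta$ gives
\[
  \varepsilon(\tau_1\tau_2)=\tau_1\bigl(\tau_2(g_\varepsilon)-g_\varepsilon\bigr)+\tau_1(g_\varepsilon)-g_\varepsilon+[\bar\delta(\tau_1),g_{\bar\rho}]+[\bar\delta(\tau_2),g_{\bar\rho}].
\]
By \ref{eps-in-N}, $\varepsilon(\tau_2)\in N$ is $\tau_1$-fixed. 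Applying $\tau_1$ to its definition therefore yields $\tau_1\bigl(\tau_2(g_\varepsilon)-g_\varepsilon\bigr)=\varepsilon(\tau_2)-[\bar\delta(\tau_2),\tau_1(g_{\bar\rho})]$. Substituting this gives $\varepsilon(\tau_1\tau_2)=\varepsilon(\tau_1)+\varepsilon(\tau_2)-[\bar\delta(\tau_2),\tau_1(g_{\bar\rho})-g_{\bar\rho}]$, and since $\tau_1(g_{\bar\rho})-g_{\bar\rho}=\bar\rho(\tau_1)$, this is exactly the claimed formula for $\partial\varepsilon$.

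For \ref{lastjump-eps}, I would use that $\lastjump\varepsilon$ only depends on $\varepsilon|_{\Gamma_K^0}$, with $\Gamma_K^0=\Gamma_{K^\ur}$ and $K^\ur=\bar\F_q\ppar{T}$.
\begin{itemize}
\item Since $\bar\delta$ is unramified, the second term of $\varepsilon$ vanishes on inertia. There $\varepsilon$ is therefore the Artin--Schreier--Witt homomorphism of $\Gamma_{K^\ur}$ attached to $m_\varepsilon$.
\item By \Cref{prop:local-fundom}, $m_{\bar\delta}$ is a constant in $Q_{\F_q}$, so $g_{\bar\delta}$ may be taken in $Q\otimes W(\bar\F_q)$. (Any root of $\wp$ differs from such a choice by an element of $Q$, and the statement fixes some $g_{\bar\delta}$.) This needs a little care.
\item Writing $m_{\bar\rho}=m_0+\sum_{n\in\notp}m_n[T]^{-n}$, we get $m_\varepsilon=[m_0,g_{\bar\delta}]+\sum_n[m_n,g_{\bar\delta}][T]^{-n}$. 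This lies in $N\otimes\cD^0$ formed over the residue field $\bar\F_q$, i.e.\ over $K^\ur$.
\item Next I would apply \Cref{prop:lj-asw} over $K^\ur$, or over a finite unramified extension $\F_{q^r}\ppar{T}$ containing the finitely many constants involved. This is legitimate because unramified base change does not alter the upper numbering filtration on inertia.
\item By linearity, $p^{\mu_{v+1}(n)}m_n=0$ implies $p^{\mu_{v+1}(n)}[m_n,g_{\bar\delta}]=[p^{\mu_{v+1}(n)}m_n,g_{\bar\delta}]=0$. Taking $v=\lastjump\bar\rho$ in \Cref{eqn:formula-lj-asw} then gives $\lastjump\varepsilon\leq\lastjump\bar\rho$.
\end{itemize}
I expect the main obstacle to be this last step. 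One has to justify choosing $g_{\bar\delta}$ with constant coefficients, and justify applying the last-jump formula over a finite unramified extension, where the coefficients $[m_n,g_{\bar\delta}]$ no longer lie in $G\otimes W(\F_q)$.
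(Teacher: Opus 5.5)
Your proposal is correct and follows the paper's proof essentially step by step. Part (i) uses $\ker\wp=N$ and antisymmetry of the bracket, part (ii) uses $\tau_1$-invariance of $\varepsilon(\tau_2)\in N$, and part (iii) passes to a finite unramified extension $K'=\F_{q'}\ppar{T}$, as the paper does.

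The two points you flag are settled as you suggest. Because the setup normalizes $m_{\bar\delta}\in Q\otimes\cD^0$, \Cref{prop:local-fundom} gives $m_{\bar\delta}\in Q_{\F_q}$, so the fixed $g_{\bar\delta}$ lies in $Q\otimes W(\bar\F_p)$ (preimages under $\wp$ differ by elements of $Q$). \Cref{prop:lj-asw} is then applied to $K'$ itself: its uniformizer is still $T$ and its coefficients only need to lie in $N\otimes W(\F_{q'})$, not in $N\otimes W(\F_q)$.
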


\begin{proof}\
  \begin{enumerate}[label=(\roman*)]
  \item
  To prove~$\varepsilon(\tau) \in N = \ker \wp$, we show that $\wp(\tau(g_\varepsilon) - g_\varepsilon) = \wp([g_{\bar\rho}, \bar\delta(\tau)])$:
  \begin{align*}
      \wp(\tau(g_\varepsilon) - g_\varepsilon)
      & =
      \tau(m_\varepsilon) - m_\varepsilon
      \tag*{as~$\wp(g_\varepsilon) = m_\varepsilon$ and $\sigma$ commutes with $\tau$}
      \\
      & =
      [m_{\bar\rho}, \tau(g_{\bar\delta})] - [m_{\bar\rho}, g_{\bar\delta}]
      \tag*{as~$m_\varepsilon = [m_{\bar\rho}, g_{\bar\delta}]$ and $m_{\bar\rho} \in Q_K$}
      \\
      & =
      [\wp(g_{\bar\rho}), \bar\delta(\tau)]
      \tag*{by bilinearity, using~$m_{\bar\rho} = \wp(g_{\bar\rho})$ and $\tau(g_{\bar\delta}) - g_{\bar\delta} = \bar\delta(\tau)$}
      \\
      &=
      [\sigma(g_{\bar\rho}), \bar\delta(\tau)]
      -
      [g_{\bar\rho}, \bar\delta(\tau)]
      \tag*{by bilinearity}
      \\
      &=
      \sigma([g_{\bar\rho}, \bar\delta(\tau)])
      -
      [g_{\bar\rho}, \bar\delta(\tau)]
      \tag*{because $\bar\delta(\tau) \in Q$}
      \\
      &=
      \wp([g_{\bar\rho}, \bar\delta(\tau)]).
  \end{align*}

  \item
  For any~$\tau_1, \tau_2 \in \Gamma_K$, we have
  \begin{align*}
    \partial \varepsilon(\tau_1, \tau_2)
    & =
    \varepsilon(\tau_1) + \varepsilon(\tau_2) - \varepsilon(\tau_1 \tau_2)
    \\
    & =
    \varepsilon(\tau_1) + \tau_1(\varepsilon(\tau_2)) - \varepsilon(\tau_1 \tau_2)
    \tag*{as~$\varepsilon(\tau_2) \in N \subseteq N_K$}
    \\
    & =
    \underbrace{
      (\tau_1(g_\varepsilon) - g_\varepsilon)
      + \tau_1(\tau_2(g_\varepsilon) - g_\varepsilon)
      - (\tau_1\tau_2(g_\varepsilon) - g_\varepsilon)
    }_{= 0}
    \\
    & \phantom{={}}
    + [\bar\delta(\tau_1), g_{\bar\rho}]
    + [\bar\delta(\tau_2), \tau_1(g_{\bar\rho})]
    - [\bar\delta(\tau_1\tau_2), g_{\bar\rho}]
    \tag*{as~$\bar\delta(\tau_2) \in Q \subseteq Q_K$}
    \\
    & =
    [\bar\delta(\tau_2), \tau_1(g_{\bar\rho})]
    - [\bar\delta(\tau_2), g_{\bar\rho}]
    \tag*{by bilinearity, as $\bar\delta(\tau_1\tau_2) = \bar\delta(\tau_1) + \bar\delta(\tau_2)$}
    \\
    & =
    [\bar\delta(\tau_2), \bar\rho(\tau_1)]
    \tag*{
      by bilinearity, as $\bar\rho(\tau_1) = \tau_1(g_{\bar\rho})-g_{\bar\rho}$.
    }
  \end{align*}

  \item
  Assume that~$\bar\delta \in \Hom_\ur(\Gamma_K, Q)$.
  By \Cref{prop:local-fundom}, we have~$m_{\bar\delta} \in Q_{\F_q} = Q \otimes W(\F_q) \subseteq Q \otimes W(\bar\F_p)$, so~$g_{\bar\delta} \in Q \otimes W(\bar\F_p)$.
  ($\wp$ is surjective as an endomorphism of~$Q \otimes W(\bar\F_p)$ and has kernel $Q\subset Q\otimes W(\bar\F_p)$.)
  Our strategy to prove that $\lastjump\varepsilon\leq\lastjump\bar\rho$ is as follows: It suffices to compute $\lastjump\varepsilon$ in some unramified extension of $K$, hence we can essentially assume that $\bar\delta=0$, and then $\varepsilon$ is simply the homomorphism $\tau\mapsto\tau(g_\varepsilon)-g_\varepsilon$ associated to $m_\varepsilon$ via Artin--Schreier theory, whose last jump is computed using \Cref{prop:lj-asw}.
  To make this strategy precise, let~$q'$ be a power of~$q$ such that $g_{\bar\delta} \in Q \otimes W(\F_{q'})$, and let~$K' := \F_{q'}\ppar{T}$.
  Since~$K'|K$ is unramified, for any $v > -1$, we have $\Gamma_{K'}^v = \Gamma_K^v$.
  Write~$m_{\bar\rho} \in Q \otimes \cD^0$ as
  \[
    m_{\bar\rho}
    =
    \sum_{
      n \in \Onotp
    }
      m_{\bar\rho,n} [T]^{-n}
    \qquad
    \textnormal{with }
    \begin{cases}
      m_{\bar\rho,n} \in Q \otimes W(\F_q) & \textnormal{for all } n \in \Onotp \\
      m_{\bar\rho,n} = 0 & \textnormal{for almost all } n
    \end{cases}
  \]
  and let $v := \lastjump \bar\rho$.
  By \Cref{prop:lj-asw}, we have $p^{\mu_{v+1}(n)} m_{\bar\rho,n} = 0$ for all~$n \in \notp$.
  We have
  \[
    \wp(g_\varepsilon)
    =
    m_\varepsilon
    =
    [m_{\bar\rho}, g_{\bar\delta}]
    =
    \sum_{
      n \in \Onotp
    }
      \leftl[
          m_{\bar\rho,n},
          g_{\bar\delta}
      \rightr]
      \cdot
      [T]^{-n},
  \]
  which belongs to~$Q \otimes_{\Z_p} \cD^0 \otimes_{W(\F_q)} W(\F_{q'})$.
  Moreover, for any~$n \in \notp$, we have by bilinearity:
  \[
    p^{\mu_{v+1}(n)}
    \leftl[
          m_{\bar\rho,n},
          g_{\bar\delta}
    \rightr]
    =
    \bigl[
      p^{\mu_{v+1}(n)} m_{\bar\rho,n}, \,
      g_{\bar\delta}
    \bigr]
    =
    0
  \]
  so, by \Cref{prop:lj-asw} applied over~$K'$ (noting that~$T$ is still a uniformizer of~$K'$ as~$K'|K$ is unramified), the homomorphism $\varepsilon' \in \Hom(\Gamma_{K'}, N)$, $\tau \mapsto \tau(g_\varepsilon) - g_\varepsilon$ satisfies $\lastjump \varepsilon' \leq v$.
  Since $\bar\delta \in \Hom_\ur(\Gamma_K, Q)$, for any~$\tau$ in the inertia subgroup $\Gamma_K^0 = \Gamma_{K'}^0$, we have $\bar\delta(\tau)=0$ and hence $\varepsilon(\tau) = \tau(g_\varepsilon) - g_\varepsilon = \varepsilon'(\tau)$ by definition.
  Since the last jump only depends on the restriction of~$\varepsilon$ to inertia, we have $\lastjump \varepsilon = \lastjump \varepsilon' \leq v = \lastjump \bar\rho$.
  \qedhere
  \end{enumerate}
\end{proof}

Finally, we prove \Cref{thm:minembed-inertial}:

\begin{proof}[Proof of \Cref{thm:minembed-inertial}]
  For $i \in \{1,2\}$, pick a minimal lift $\rho_i \in \Hom(\Gamma_K, G)$ of~$\bar\rho_i$.
  All lifts of~$\bar\rho_i$ are twists of~$\rho_i$ by elements $\delta \in \Hom(\Gamma_K,N)$, and by \iref{lem:prop-min-lift}{lj-twists-min} we have $\lastjump (\delta \cdot \rho_i) = \max(\lastjump \delta, \, \lastjump \rho_i)$.
  Therefore, for any $v \geq 0$, we have
  \begin{align}
    \label{eqn:bij-minembed}
    &\cardsuchthat{
      \rho \in \Hom(\Gamma_K, G)
    }{
      \pi\circ\rho = \bar\rho_i \\
      \lastjump \rho = v
    }
    \notag
    \\
    &{}=
    \begin{cases}
      0 &
      \textnormal{if } v < \minlift \bar\rho_i,
      \\
      \cardsuchthat{
        \delta \in \Hom(\Gamma_K, N)
      }{
        \lastjump \delta \leq v
      } &
      \textnormal{if } v = \minlift \bar\rho_i,
      \\[.7em]
      \cardsuchthat{
        \delta \in \Hom(\Gamma_K, N)
      }{
        \lastjump \delta = v
      } &
      \textnormal{if } v > \minlift \bar\rho_i.
    \end{cases}
  \end{align}
  The right-hand side depends on~$\bar\rho_i$ only through~$\minlift \bar\rho_i$, so it suffices to show that $\minlift {\bar\rho_2} = \minlift {\bar\rho_1}$.
  By symmetry, proving the inequality~$\leq$ suffices: we must construct a lift~$\rho_2'$ of $\bar\rho_2$ with $\lastjump \rho_2' \leq \lastjump \rho_1$.
  Let $\bar\delta := \bar\rho_2 - \bar\rho_1$.
  By assumption, $\bar\delta$ lies in $\Hom_\ur(\Gamma_K, Q)$, and thus has an unramified lift $\delta \in \Hom_\ur(\Gamma_K, G)$ by \Cref{prop:trivial-bound}.
  Let $\varepsilon:\Gamma_K\to N$ be the continuous map defined in \Cref{lem:explicit-epsilon} (with $\rho=\rho_1$, $\delta=\delta$).
  By \Cref{lem:epsilon-minembed} and \iref{lem:explicit-epsilon}{partial-eps}, the pointwise product $\rho_2' := \varepsilon\cdot\delta\cdot\rho_1$ is a group homomorphism (and thus a lift of~$\bar\rho_2 = \bar\delta+\bar\rho_1$).
  By \Cref{eqn:ultineq} and \iref{lem:explicit-epsilon}{lastjump-eps} (with \Cref{prop:trivial-bound}), we have $\lastjump \rho_2' \leq \max(\lastjump\varepsilon,\lastjump\delta,\lastjump\rho_1) = \lastjump(\rho_1)$.
\end{proof}

Together with \Cref{thm:abstract-locglob}, \Cref{thm:minembed-inertial} directly implies \Cref{thm:intro-local-global}.

\section{An application: counting $D_4$-extensions of~$\F_q(T)$ in characteristic~$2$}
\label{sn:counting-d4}

We consider the dihedral group~$D_4$, of order~$8$ and nilpotency class~$2$, which is also the Heisenberg group $\mat{1&\F_2&\F_2\\&1&\F_2\\&&1}$.
Its center is $Z(D_4) = \mat{1&0&\F_2\\&1&0\\&&1} \simeq \F_2$, and the quotient~$D_4/Z(D_4)$ is isomorphic to the abelian group~$\F_2^2$.
(The surjection $\pi \colon D_4 \twoheadrightarrow \F_2^2$ is given by reading the two coefficients directly above the diagonal.)

In this section, we describe the distribution of last jumps of elements of $\Hom(\Gamma_K, D_4)$ when~$K$ is either a local or a rational global function field of characteristic~$2$.
(For $p \neq 2$, we have already dealt with Heisenberg $p$-extensions in characteristic~$p$ in \cite[Theorem~1.3]{wildcount}.)

\subsection{Local distribution}
\label{subsn:local-d4}

Fix a finite field~$\F_q$ of characteristic~$2$.
Let~$K := \F_q\ppar{T}$.
For any $x \in K$, let $w(x) := \max(0, -v_T(x))$ where $v_T$ is the $T$-adic valuation.
By Artin--Schreier theory (see \Cref{rk:elemab}), the elements of $\Hom(\Gamma_K,\F_2)$ are parametrized by elements of the set $\cD^0 := \bigoplus_{n \in \Onottwo} \F_q T^{-n} \subset \F_q\ppar{T}$ (which was called $\F_2\otimes\cD^0$ in \Cref{subsn:asw}); moreover, the last
jump of the element of $\Hom(\Gamma_K,\F_2)$ corresponding to $x\in\cD^0$ is $w(x)$, two elements $x, x' \in \cD^0$ correspond to the same element of $\Hom(\Gamma_K,\F_2)$ if and only if $x - x' \in \wp(\F_q)$, and the unramified homomorphisms correspond to the elements of $\F_q\subseteq\cD^0$.

\begin{proposition}
  \label{prop:minembed-D4}
  Let~$a,c \in \cD^0$.
  Let $\bar\rho\in\Hom(\Gamma_K,\F_2^2)$ be the homomorphism corresponding to $(a,c)$ via Artin--Schreier theory, i.e., the map $\tau \mapsto (\tau(\alpha) - \alpha, \tau(\gamma) - \gamma)$ for any~$\alpha \in \wp^{-1}(a)$ and~$\gamma \in \wp^{-1}(c)$.
  Then, the minimal lift height of $\bar\rho$ with respect to $\pi\colon D_4 \twoheadrightarrow \F_2^2$ is
  $
    \minlift \bar\rho = w(a) + w(c)
  $.
\end{proposition}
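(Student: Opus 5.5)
The plan is to compute the last jump of one explicit lift, and then get minimality for free from \iref{lem:prop-min-lift}{not-integral-is-minimal}. Here $N = Z(D_4) \simeq \F_2$ is elementary abelian and $p = 2$. So any lift whose last jump is an even integer is automatically a minimal lift. When $a$ and $c$ are both ramified, $w(a)$ and $w(c)$ are odd, so $w(a) + w(c)$ is even. It therefore suffices to produce one lift $\rho$ of $\bar\rho$ with $\lastjump\rho = w(a)+w(c)$ exactly.

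\textbf{Degenerate cases.} Suppose $w(a) = 0$, so that $a \in \F_q$. Then $\bar\rho$ is an unramified twist of the homomorphism attached to $(0,c)$. By the equality $\minlift\bar\rho_1 = \minlift\bar\rho_2$ established in the proof of \Cref{thm:minembed-inertial}, we may assume $a = 0$. The homomorphism attached to $(0,c)$ takes values in the cyclic subgroup $\F_2 \times 0 \subseteq \F_2^2$. Its preimage in $D_4$ is abelian (of order $4$) and contains a complement to $N$. Hence $\bar\rho$ lifts through a copy of $\F_2$, giving a lift with last jump $w(c)$. Together with \Cref{prop:trivial-bound}, this gives $\minlift\bar\rho = w(c) = w(a)+w(c)$. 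The case $w(c)=0$ is symmetric, and the case where both vanish is \Cref{prop:trivial-bound}.

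\textbf{Main case: $w(a), w(c)$ odd and positive.}
\begin{enumerate}
\item Write the standard Heisenberg lift. Choose $\alpha \in \wp^{-1}(a)$, $\gamma \in \wp^{-1}(c)$, and $\beta$ with $\wp(\beta) = \alpha c$ (or $\gamma a$, depending on the orientation of the cocycle). Define $\rho(\tau)$ as the unipotent matrix with entries $\tau\alpha-\alpha$ and $\tau\gamma-\gamma$ above the diagonal, and top-right entry $\tau\beta-\beta - (\tau\alpha-\alpha)\gamma$ or the analogous expression. One checks directly, in the manner of \Cref{lem:explicit-epsilon}, that this top-right entry lies in $\F_2$ and that $\rho$ is a homomorphism.
\item Compute $\lastjump\rho$ via Herbrand's function on the tower $K \subset L := K(\alpha,\gamma) \subset L(\beta)$. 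If $w(a) \ne w(c)$, the lower breaks of $L|K$ are $w(a)$ and $w(c)$, so the Herbrand function of $L|K$ is explicit. If $w(a) = w(c)$, first modify $c$ by $a$ times a suitable constant, using unramified twists and $\wp(\F_q)$-adjustments, so that $L|K$ has two distinct upper breaks. This step needs care, since not every element of $\mathrm{GL}_2(\F_2)$ lifts to $\mathrm{Aut}(D_4)$.
\item Reduce $\alpha c$ modulo $\wp(L)$ to a reduced element of $L$, meaning one whose valuation is coprime to $2$. Its pole order is the lower break $t$ of $L(\beta)|L$. The expected value is roughly $2w(c) + w(a)$ in the $L$-valuation when $w(a) < w(c)$. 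Then convert $t$ to the upper numbering over $K$ via $\varphi_{L|K}$. The target is exactly $w(a)+w(c)$.
\end{enumerate}

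Once $\lastjump\rho = w(a)+w(c)$ is established, this value is even, so \iref{lem:prop-min-lift}{not-integral-is-minimal} gives $\minlift\bar\rho = w(a)+w(c)$ with no separate lower-bound argument.

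The main obstacle is step 3. Two things must be controlled there. First, the reduction of $\alpha c$ modulo $\wp(L)$ must not cancel the leading pole, which is where the oddness of $w(a)$ and $w(c)$ and the expansion $\alpha \sim a^{1/2}$ enter. Second, the equal-break case $w(a)=w(c)$ must be handled correctly.
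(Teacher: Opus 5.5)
Your degenerate cases and the final appeal to \iref{lem:prop-min-lift}{not-integral-is-minimal} match the paper, but the main case is not proved. The claim that your explicit lift has last jump exactly $w(a)+w(c)$ \emph{is} the content of the proposition there, and you leave it as ``the main obstacle''. The paper does not do this by hand. It quotes Imai's formula~\eqref{eqn:lj-d4} and takes $b=0$, which is your lift with $\wp(\beta)=\gamma a$; then $w(ac')=w(a)+w(c)$ dominates the other two terms.

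If you want to derive it yourself, note that the numbers in your sketch are off. Put $m:=w(a)<n:=w(c)$. Then $m$ and $n$ are the \emph{upper} breaks of $L|K$, while its lower breaks are $m$ and $2n-m$. So the lower break of $L(\beta)|L$ must be $t=2n+3m$, so that $\varphi_{L|K}(t)=n+\tfrac{t-(2n-m)}{4}=m+n$. Your ``roughly $2n+m$'' would instead give $n+\tfrac m2$, which is one of the subdominant terms of Imai's formula. Since $v_L(\gamma a)=-(2n+4m)$ is even, you would have to show that reduction modulo $\wp(L)$ stops at pole order exactly $2n+3m$. This requires working with an explicit uniformizer of $L$ and is the real work, not a routine check.

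Your treatment of $w(a)=w(c)=:n$ is also wrong. Replacing $c$ by $c+\lambda a$ with $\lambda\in\F_q\setminus\F_2$ is not an operation on $\bar\rho$ at all: the resulting $\F_2^2$-extension is a different field, since $\wp(\lambda\alpha)\neq\lambda a$. For $\lambda=1$, it composes $\bar\rho$ with an element of $\mathrm{GL}_2(\F_2)$ that does not lift to $\mathrm{Aut}(D_4)$, and this genuinely changes $\minlift$: compare $(a,a)$ and $(a,0)$, giving $2w(a)$ versus $w(a)$. The modification is also unnecessary. If the $T^{-n}$-coefficients of $a$ and $c$ differ, $L|K$ has the single break $n$ and $\varphi_{L|K}$ is still explicit (the target becomes $t=5n$). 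If they agree but $a-c\notin\F_q$, $L|K$ already has the two distinct breaks $w(a+c)<n$.

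What your plan genuinely misses is the subcase $a-c\in\F_q$. There $\bar\rho(\Gamma_K^0)$ is the diagonal $\Delta$, $L|K$ is not totally ramified of degree $4$, and every lift lands in $\pi^{-1}(\Delta)\simeq\Z/4\Z$. The paper treats this case separately via \Cref{prop:lj-asw}. Your construction does work there once adapted: after an unramified twist making $a=c$, take $\gamma=\alpha$. Then $v_{K(\alpha)}(\alpha a)=-3m$ is odd, so $t=3m$ and $\varphi_{K(\alpha)|K}(3m)=2m=w(a)+w(c)$.
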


\begin{proof}
  By \Cref{rk:elemab}, we have $\lastjump \bar\rho = \max(w(a), w(c))$.
  Write $a = \sum_{n \in \Onottwo} a_n T^{-n}$, and $c = \sum_{n \in \Onottwo} c_n T^{-n}$.
  By \Cref{thm:minembed-inertial}, $\minlift \bar\rho$ is unchanged if we apply unramified twists to~$\bar\rho$, so we assume that $a_0 = c_0 = 0$.
  We now consider the possible inertia subgroups~$\bar\rho(\Gamma_K^0) \subseteq \F_p^2$.

  If $\bar\rho(\Gamma_K^0) \subseteq 0\oplus\F_2$, then $w(a)=0$, so $a_0=0$ implies $a=0$, so we in fact have $\bar\rho(\Gamma_K)\subseteq0\oplus\F_2$.
  Consider the subgroup~$H := \mat{1&0&0\\&1&\F_2\\&&1}$ of~$D_4$.
  In restriction to~$H$, the map~$\pi$ is an isomorphism $H \simto 0 \oplus \F_2$.
  Let~$i \colon 0 \oplus \F_2 \simto H$ be its inverse.
  Then, $i \circ \bar\rho$ is a lift of~$\bar\rho$ with same last jump as~$\bar\rho$, namely $\max(w(a), w(c)) = w(c) = w(a)+w(c)$.
  Together with the inequality $\minlift\bar\rho\geq\lastjump\bar\rho$ of \Cref{prop:trivial-bound}, this implies the claim.
  The case $\bar\rho(\Gamma_K^0)\subseteq\F_2\oplus0$ is treated similarly.

  If $\bar\rho(\Gamma_K^0)$ is the diagonal $\Delta \simeq \F_2$ in~$\F_2^2$, then $w(a-c)=0$, so $a_0=c_0=0$ implies $a = c$, so~$\bar\rho(\Gamma_K)$ is also the diagonal~$\Delta$.
  Any lift~$\rho$ of~$\bar\rho$ must satisfy $\rho(\Gamma_K) \subseteq \pi^{-1}(\Delta) \simeq \Z/4\Z$.
  Therefore, $\minlift\bar\rho$ is also the minimal lift of~$\bar\rho$ relative to the surjection $\Z/4\Z \twoheadrightarrow \F_2\simeq\Delta$, and using \Cref{prop:lj-asw} it is elementary to deduce that $\minlift\bar\rho=2\lastjump\bar\rho = w(a)+w(c)$ (a minimal lift is given by the $\Z/4\Z$-extension associated to $\sum [a_n] [T]^{-n}$, where~$[a_n]$ is the Teichmüller lift of~$a_n=c_n$ in $W(K^\perf)$).

  We now focus on the remaining case $\bar\rho(\Gamma_K^0) = \F_2^2$.
  As there are no proper subgroups of~$D_4$ whose image under~$\pi$ is~$\F_2^2$, this implies $\rho(\Gamma_K^0)=D_4$ for all lifts~$\rho$ of~$\bar\rho$, so any lift is surjective and totally ramified.
  By \cite[Example~3.8.1]{imai-wild-ramification-groups} (see also \cite[Subsections~2.2 and~3.1]{wildcount}), lifts~$\rho \in \Hom(\Gamma_K, D_4)$ of $\bar\rho$ are parametrized by matrices $M := \mat{1&a&b\\&1&c\\&&1} \in H_3(K)$ with $b \in \cD^0$.
  Explicitly, letting $g \in H_3(K^\sep)$ be a matrix such that $\sigma(g)g^{-1} = M$, we have $\rho(\tau) = g^{-1} \tau(g)$.

  By \Cref{rk:elemab}, $w(a)$ and~$w(c)$ are both odd, so $w(a)+w(c)$ is even.
  By \iref{lem:prop-min-lift}{not-integral-is-minimal}, it thus suffices to prove that some lift has last jump~$w(a) + w(c)$, and it will necessarily be minimal.
  %
  For any $x = \sum_{n \in \Onottwo} x_n T^{-n} \in \cD^0$, let $x' := T \frac{\d x}{\d T} = - \sum_{n \in \nottwo} n x_n T^{-n}$.
  By \cite[Example~3.8.1]{imai-wild-ramification-groups}, we have a formula for~$\lastjump\rho$ in terms of the parametrization above:
  \begin{equation}
    \label{eqn:lj-d4}
    \lastjump \rho
    =
    \max\leftl(
      w\bigl(
        b' - a c'
      \bigr), \;
      \frac{w(a)}2+w(c), \;
      \frac{w(c)}2+w(a)
    \rightr).
  \end{equation}
  Take~$b=0$.
  Then, we have $w(b'-ac') = w(ac') = w(a) + w(c') = w(a) + w(c)$, and the two other terms are $\leq w(a)+w(c)$, concluding the proof.
\end{proof}

The formula for~$\minlift \bar\rho$ given in \Cref{prop:minembed-D4} is invariant under adding elements of~$\F_q$ to~$a$ or~$c$: this is a concrete manifestation of \Cref{thm:minembed-inertial}.
Another consequence of \Cref{prop:minembed-D4} (see also \iref{lem:prop-min-lift}{not-integral-is-minimal}) is that the last jump of any~$\rho \in \Hom(\Gamma_K, D_4)$ is an integer.
(This was already observed in \cite[Remark~1.7]{elder}.)

\begin{corollary}
  \label{cor:local-distrib-D4}
  For each integer~$v \geq 0$, we have
  \[
    \frac14
    \cardsuchthat{
      \bar\rho \in \Hom\bigl(\Gamma_K, \F_2^2\bigr)
    }{
      \minlift \bar\rho = v
    }
    =
    \begin{cases}
      1 &
      \textnormal{if } v = 0 \\
      2 q^{\frac{v-1}2} (q-1) &
      \textnormal{if } v \in 2\N - 1 \\
      \frac v2 q^{\frac v 2 - 1}(q-1)^2 &
      \textnormal{if } v \in 2\N.
    \end{cases}
  \]
\end{corollary}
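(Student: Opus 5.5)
The plan is to reduce the count to pairs of Artin--Schreier representatives with vanishing constant terms, using the formula $\minlift\bar\rho = w(a)+w(c)$ of \Cref{prop:minembed-D4}, and then to do an elementary count.

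First, parametrize $\Hom(\Gamma_K,\F_2^2) = \Hom(\Gamma_K,\F_2)^2$ by pairs $(a,c)\in(\cD^0)^2$, each coordinate taken modulo $\wp(\F_q)$ (\Cref{rk:elemab}).
\begin{itemize}
\item The value $w(a)+w(c)$ depends only on the classes of $a$ and $c$ modulo $\F_q$: changing $a_0$ or $c_0$ does not change $w$, since $n=0$ is the only constant exponent.
\item Since $\card{\F_q/\wp(\F_q)}=2$ (the map $\wp$ is $2$-to-$1$ on $\F_q$), each class of $a$ modulo $\F_q$ contains exactly $2$ classes modulo $\wp(\F_q)$, and likewise for $c$.
\item Hence $\frac14$ of the count equals the number of pairs $(a,c)$ with $a_0=c_0=0$, that is, $a,c\in\bigoplus_{n\in\nottwo}\F_q T^{-n}$, satisfying $w(a)+w(c)=v$.
\end{itemize}
Equivalently, we are counting pairs of inertial types, consistent with \Cref{thm:minembed-inertial}.

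Next, for $x\in\bigoplus_{n\in\nottwo}\F_q T^{-n}$, let $f(n)$ be the number of such $x$ with $w(x)=n$.
\begin{itemize}
\item $f(0)=1$, since only $x=0$ has $w(x)=0$.
\item For odd $n$, $f(n)=(q-1)q^{(n-1)/2}$: the leading coefficient $x_n$ must be nonzero, and the coefficients $x_m$ for odd $m<n$ are free, giving $(n-1)/2$ free coordinates.
\item $f(n)=0$ for even $n>0$, since only odd exponents occur.
\end{itemize}
The desired quantity is then the convolution $\sum_{i+j=v} f(i)f(j)$.

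Finally, we split into cases according to $v$.
\begin{itemize}
\item For $v=0$, only $i=j=0$ contributes, giving $1$.
\item For odd $v$, $i$ and $j$ have different parity. One of them must be even, hence $0$, and the other equals $v$. This gives $2f(v)=2q^{\frac{v-1}2}(q-1)$.
\item For even $v\geq2$, having $i=0$ would force $j=v$ even and nonzero, which contributes nothing. So $i,j$ are both odd, with $i\in\{1,3,\dots,v-1\}$; there are $v/2$ such terms. Each term equals $(q-1)^2q^{(i-1)/2+(j-1)/2}=(q-1)^2q^{v/2-1}$, giving $\frac v2 q^{\frac v2-1}(q-1)^2$.
\end{itemize}

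No step is a real obstacle. The only point needing care is the first reduction: dividing by $4$ must exactly account for the $\wp(\F_q)$-ambiguity in the constant terms. This relies on $\card{\F_q/\wp(\F_q)}=2$ together with the invariance of $w(a)+w(c)$ under changing the constant terms.
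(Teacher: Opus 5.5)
Your proposal is correct and follows essentially the same route as the paper. The paper likewise divides by $4$ to pass to pairs of inertial types $(a,c)\in(\cD^0/\F_q)^2$, applies $\minlift\bar\rho = w(a)+w(c)$ from \Cref{prop:minembed-D4}, and then counts by the parity of $w(a)$ and $w(c)$. The only cosmetic difference is that you justify the factor $\frac14$ through $\card{\F_q/\wp(\F_q)}=2$ rather than through \Cref{eq:unram-ext}, which amounts to the same thing.
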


\begin{proof}
  By \Cref{eq:unram-ext}, dividing by $4 = \card{\F_2^2} = \card{\Hom_\ur(\Gamma_K, \F_2^2)}$ amounts to counting the inertial types of~$\bar\rho$,
  (for which~$\minlift$ makes sense by \Cref{thm:minembed-inertial}, or directly by \Cref{prop:minembed-D4})
  i.e., to counting pairs of elements $a,c$ of $\cD^0/\F_q$.
  In other words, we only need to count choices of $a_n$ and $c_n$ for $n\in\nottwo$, and we ignore the number of choices for $a_0$ and $c_0$ (cf.~\Cref{prop:local-fundom}).
  By \Cref{prop:minembed-D4}, we have $\minlift \bar\rho = v$ if and only if $w(a) + w(c) = v$.

  For~$v = 0$, we get the unique unramified inertial type.

  Assume that~$v$ is odd.
  Since~$w(a)$ and~$w(c)$ are either odd or zero, the only two possibilities are that $w(a)=v$ and $w(c)=0$, or $w(a)=0$ and $w(c) = v$.
  The number of~$x \in \cD^0/\F_q$ such that $w(x) = v$ is $q^{\frac{v-1}2} (q-1)$, giving the result in that case.

  Assume now that~$v > 0$ is even.
  The pair $(w(a), w(c))$ must be of the form $(i, v-i)$ for some odd~$i$ with $1 \leq i \leq v-1$.
  For each such $i$ (of which there are~$\frac v2$), the number of corresponding choices for $(a,c) \in (\cD^0/\F_q)^2$ is
  $
    q^{\frac{i-1}2}(q-1)\cdot q^{\frac{v-i-1}2} (q-1)
    =
    q^{\frac v2 - 1} (q-1)^2
  $.
\end{proof}

By combining the above corollary with the following lemma, one obtains exact counts for the number of $D_4$-extensions of~$K = \F_q\ppar{T}$ with a given last jump.

\begin{lemma}
  \label{lem:d4-from-f22}
  For each integer $v\geq0$, we have
  \[
    \frac18
    \cardsuchthat{
      \rho \in \Hom(\Gamma_K, D_4)
    }{
      \lastjump \rho \leq v
    }
    = 
    q^{\ceil{v/2}}
    \cdot
    \sum_{v' = 0}^v
    \frac14
    \cardsuchthat{
      \bar\rho \in \Hom(\Gamma_K, \F_2^2)
    }{
      \minlift \bar\rho = v'
    }.
  \]
\end{lemma}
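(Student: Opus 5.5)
We count lifts fiber by fiber over $\Hom(\Gamma_K,\F_2^2)$, using the structure of lifts from \Cref{lem:embpb-solvable} and \iref{lem:prop-min-lift}{lj-twists-min}, as in \Cref{eqn:bij-minembed}. Fix $\bar\rho\in\Hom(\Gamma_K,\F_2^2)$ and a minimal lift $\rho_0$. Every lift is $\delta\cdot\rho_0$ for a unique $\delta\in\Hom(\Gamma_K,\F_2)$ (here $N=Z(D_4)\simeq\F_2$). Moreover $\lastjump(\delta\cdot\rho_0)=\max(\lastjump\delta,\minlift\bar\rho)$. So for each integer $v\ge0$, the number of lifts of $\bar\rho$ with $\lastjump\le v$ is
\[
  \cardsuchthat{\delta\in\Hom(\Gamma_K,\F_2)}{\lastjump\delta\le v}
\]
if $\minlift\bar\rho\le v$, and $0$ otherwise.

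Next we compute this count of $\F_2$-homomorphisms via \Cref{rk:elemab}. Homomorphisms correspond to $x\in\cD^0$ modulo $\wp(\F_q)$, and $\lastjump\delta=w(x)$. The condition $w(x)\le v$ means $x=x_0+\sum_{n\text{ odd},\,n\le v}x_nT^{-n}$. The number of odd $n$ with $1\le n\le v$ is $\ceil{v/2}$. The $x_0\in\F_q$ taken modulo $\wp(\F_q)$ contribute a factor $\card{\F_q/\wp(\F_q)}=2$. Hence the count is $2q^{\ceil{v/2}}$.

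Summing over $\bar\rho$ gives
\[
  \cardsuchthat{\rho}{\lastjump\rho\le v}=2q^{\ceil{v/2}}\cdot\cardsuchthat{\bar\rho}{\minlift\bar\rho\le v}.
\]
Dividing by $8=2\cdot4$ and splitting the condition $\minlift\bar\rho\le v$ into the values $v'=0,\dots,v$ yields the claimed formula. These values are integers by \Cref{prop:minembed-D4}, since $w(a)+w(c)\in\Z$, so restricting $v'$ to integers loses nothing.

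The main point needing care is the count $2q^{\ceil{v/2}}$. One must check that the parametrization is by $\cD^0/\wp(\F_q)$ and not by $\cD^0$ itself. One must also confirm $\card{\F_q/\wp(\F_q)}=2$: the kernel of $\wp$ on $\F_q$ is $\F_2$, so the image has index $2$. Everything else is formal.
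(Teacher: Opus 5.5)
Your proof is correct and follows essentially the same route as the paper: you split the count into fibers over $\Hom(\Gamma_K,\F_2^2)$, use \iref{lem:prop-min-lift}{lj-twists-min} (i.e., \Cref{eqn:bij-minembed}) to reduce each fiber with $\minlift\bar\rho\le v$ to counting $\delta\in\Hom(\Gamma_K,Z(D_4))$ with $\lastjump\delta\le v$, and evaluate that count as $2q^{\ceil{v/2}}$ via \Cref{rk:elemab}. Your explicit remark that $\minlift\bar\rho$ is an integer (by \Cref{prop:minembed-D4}) justifies rewriting the condition $\minlift\bar\rho\le v$ as a sum over integers $v'=0,\dots,v$; the paper leaves this step implicit.
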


\begin{proof}
  Since $Z(D_4) \simeq \F_2$, it follows from \Cref{rk:elemab} that
  \begin{equation}
    \label{eqn:count-C2-ext}
    \frac12 \cardsuchthat{
      \delta \in \Hom(\Gamma_K, Z(D_4))
    }{
      \lastjump \delta \leq v
    }
    =
    q^{\cardsuchthat{n \in \nottwo}{n \leq v}}
    =
    q^{\ceil{v/2}}.
  \end{equation}
  We then have:
  \begin{align*}
    &\frac18\,
    \cardsuchthat{
      \rho \in \Hom(\Gamma_K, D_4)
    }{
      \lastjump \rho \leq v
    }
    \notag
    \\={}&
    \frac18\,
    \sum_{
      \bar\rho \in \Hom(\Gamma_K, \F_2^2)
    }
      \cardsuchthat{
        \rho \in \Hom(\Gamma_K, D_4)
      }{
        \pi\circ\rho = \bar\rho \\
        \lastjump \rho \leq v
      }
    \notag
    \\={}&
    \frac18\,\sum_{\substack{
      \bar\rho \in \Hom(\Gamma_K, \F_2^2) \\
      \minlift {\bar\rho} \leq v
    }}
      \cardsuchthat{
        \delta \in \Hom(\Gamma_K, Z(D_4))
      }{
        \lastjump \delta \leq v
      }
    \tag*{by \Cref{eqn:bij-minembed}}
    \\={}&
    q^{\ceil{v/2}}
    \cdot
    \frac14
    \cardsuchthat{
      \bar\rho \in \Hom(\Gamma_K, \F_2^2)
    }{
      \minlift \bar\rho \leq v
    }
    &
    \tag*{by \Cref{eqn:count-C2-ext}}
    \\={}&
    q^{\ceil{v/2}}
    \cdot
    \sum_{v' = 0}^v
    \frac14
    \cardsuchthat{
      \bar\rho \in \Hom(\Gamma_K, \F_2^2)
    }{
      \minlift \bar\rho = v'
    }.
    \tag*\qedhere
  \end{align*}
\end{proof}

\subsection{Global counting}

Now, fix a rational global function field~$K := \F_q(T)$ of characteristic~$2$.
For any prime~$\fp \in \cP_K$, let $\kappa_\fp := \F_{q^{\deg \fp}}$ be the residue field of~$\cO_\fp$.
For each prime~$\fp \in \cP_K$ and for all~$v \in \N$, we define the number
\[
  a_{\fp, v}
  :=
  \frac18
  \cardsuchthat{
    \rho \in \Hom(\Gamma_{K_\fp}, D_4)
  }{
    \lastjump \rho = v
  }.
\]

\begin{lemma}
  We have the following estimates:
  \[
    a_{\fp, 0} = 1,
    \qquad\qquad
    a_{\fp, 1} = 2 |\kappa_\fp|^2 + O(|\kappa_\fp|),
    \qquad\qquad
    a_{\fp, v} = O(v|\kappa_\fp|^{v+1}).
  \]
\end{lemma}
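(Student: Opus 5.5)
The plan is to apply \Cref{lem:d4-from-f22} and \Cref{cor:local-distrib-D4} to the local field $K_\fp \simeq \kappa_\fp\ppar{T_\fp}$, where $T_\fp$ is a uniformizer. Write $Q_\fp := |\kappa_\fp|$. Let
\[
  c_{v'} := \tfrac14\cardsuchthat{\bar\rho \in \Hom(\Gamma_{K_\fp},\F_2^2)}{\minlift\bar\rho = v'}
\]
be the quantities computed in \Cref{cor:local-distrib-D4}, with $q$ replaced by $Q_\fp$. By \Cref{lem:d4-from-f22}, the cumulative count $A_v := \sum_{u\le v} a_{\fp,u}$ equals $Q_\fp^{\ceil{v/2}}\sum_{v'=0}^v c_{v'}$. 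Since $a_{\fp,v} = A_v - A_{v-1}$ (with $A_{-1}=0$), each estimate follows from an explicit evaluation.

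For $v=0$, we get $A_0 = 1\cdot c_0 = 1$, so $a_{\fp,0}=1$; equivalently, the unramified homomorphisms number $8$ by \Cref{eq:unram-ext}. For $v=1$, we get
\[
  A_1 = Q_\fp\,(1 + 2(Q_\fp-1)) = 2Q_\fp^2 - Q_\fp,
\]
so $a_{\fp,1} = 2Q_\fp^2 - Q_\fp - 1 = 2Q_\fp^2 + O(Q_\fp)$. As a sanity check, the factor $2$ comes from the two inertial types in which only one of $a,c$ is ramified with $w=1$, combined with the $Q_\fp$ twists by $Z(D_4)$ of last jump $\le 1$.

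For general $v$, it suffices to bound $A_v$, since $0 \le a_{\fp,v} \le A_v$. The corollary gives $c_{v'} \le 2Q_\fp^{(v'+1)/2}$ for odd $v'$ and $c_{v'} \le \tfrac{v'}{2}Q_\fp^{v'/2+1}$ for even $v'$. Hence $c_{v'} = O(v\,Q_\fp^{v'/2+1})$ uniformly. Summing this geometric-type series over $v'\le v$ gives
\[
  \sum_{v'\le v} c_{v'} = O\bigl(v\,Q_\fp^{v/2+1}\bigr),
\]
since $Q_\fp\ge 2$ makes the series dominated by its last terms up to a constant. Multiplying by $Q_\fp^{\ceil{v/2}} \le Q_\fp^{(v+1)/2}$ yields $A_v = O(v\,Q_\fp^{v+3/2})$. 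This is slightly too weak, and tightening it is the main point to handle.

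To fix this, I will treat the parity of $v$ separately. If $v$ is even, then $\ceil{v/2}=v/2$, and the bound becomes $O(v\,Q_\fp^{v+1})$ directly. If $v$ is odd, the top term has $v'=v$ odd, so $c_v \le 2Q_\fp^{(v+1)/2}$, which contributes only $Q_\fp^{v+1}$. The largest even-index term is $c_{v-1} \le \tfrac{v}{2}Q_\fp^{(v+1)/2}$, which contributes $v\,Q_\fp^{v+1}$ after multiplying by $Q_\fp^{(v+1)/2}$. In both cases $A_v = O(v\,Q_\fp^{v+1})$ with an absolute implied constant, and hence $a_{\fp,v}=O(v|\kappa_\fp|^{v+1})$. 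The only care needed is tracking these exponents by parity; everything else is bookkeeping from the two cited results.
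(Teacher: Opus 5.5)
Your proposal is correct and is essentially the paper's proof. You compute $a_{\fp,0}$ and $a_{\fp,1}=2|\kappa_\fp|^2-|\kappa_\fp|-1$ exactly from \Cref{lem:d4-from-f22} and \Cref{cor:local-distrib-D4}, and bound $a_{\fp,v}$ by the cumulative count $\frac18\#\{\rho : \lastjump\rho\le v\}$; your parity case split is just an unpacked version of the paper's uniform bound $c_{v'}=O(v'|\kappa_\fp|^{\lfloor v'/2\rfloor+1})$ combined with $\lceil v/2\rceil+\lfloor v/2\rfloor+1=v+1$.
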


\begin{proof}
  First, we indeed have
  \[
    a_{\fp,0} = \frac18 \card{\Hom_\ur(\Gamma_{K_\fp}, D_4)} = 1
    \tag*{by~\Cref{eq:unram-ext}.}
  \]
  For larger values of~$v$, we use \Cref{lem:d4-from-f22} and \Cref{cor:local-distrib-D4}.
  The size of the residue field at~$\fp$ (the ``$q$'' in the local estimates) is $\card {\kappa_\fp}$.
  We have
  \begin{align*}
    a_{\fp, 1}
    &=
    \frac18\cardsuchthat{
      \rho \in \Hom(\Gamma_{K_\fp}, D_4)
    }{
      \lastjump \rho \leq 1
    }
    -
    a_{\fp,0}
    \\
    &=
    \card {\kappa_\fp}
    \cdot
    \sum_{v'=0}^{1}
      \frac14
      \cardsuchthat{
        \bar\rho \in \Hom(\Gamma_{K_\fp}, \F_2^2)
      }{
        \minlift \bar\rho = v'
      }
    -
    1
    \tag*{by \Cref{lem:d4-from-f22}}
    \\
    &=
    \card {\kappa_\fp}
    \cdot
    \leftl(
      2(\card {\kappa_\fp}-1)
      +1
    \rightr)
    -
    1
    =
    2\card {\kappa_\fp}^2 + O(\card {\kappa_\fp})
    \tag*{by \Cref{cor:local-distrib-D4}.}
  \end{align*}
  Finally, for arbitrary~$v$:
  \begin{align*}
    a_{\fp, v}
    &\leq
      \frac18
      \cardsuchthat{
        \rho \in \Hom(\Gamma_{K_\fp}, D_4)
      }{
        \lastjump \rho \leq v
      }
    \\
    &=
      \card {\kappa_\fp}^{\lceil v/2\rceil}
      \sum_{v' = 0}^v
      \frac14
      \cardsuchthat{
        \bar\rho \in \Hom(\Gamma_{K_\fp}, \F_2^2)
      }{
        \minlift \bar\rho = v'
      }
    \tag*{by \Cref{lem:d4-from-f22}}
    \\
    &=
      \card {\kappa_\fp}^{\lceil v/2\rceil}
      \sum_{v' = 0}^v
        O(v'\card{\kappa_\fp}^{\lfloor v'/2\rfloor+1})
    \tag*{by \Cref{cor:local-distrib-D4}}
    \\
    &=
    O\leftl(
      \card {\kappa_\fp}^{\lceil v/2\rceil}
      v
      \card{\kappa_\fp}^{\lfloor v/2\rfloor+1}
    \rightr)
    \\
    &=
    O(
      v
      \card{\kappa_\fp}^{v+1}
    ).
    \tag*\qedhere
  \end{align*}
\end{proof}

Using \Cref{thm:intro-local-global} and \cite[Lemma~5.4]{wildcount}, we obtain the following estimate for the distribution of $D_4$-extensions of~$\F_q(T)$ ordered by their global last jump (cf.~\Cref{def:global-height}):

\begin{theorem}
  \label{thm:count-global-D4}
  There is a real number~$C > 0$ such that for integers~$X \to \infty$ we have
  \[
    \cardsuchthat{
      \rho \in \Hom(\Gamma_K, D_4)
    }{
      \sum_{\fp \in \cP_K}
        \deg \fp \cdot \lastjump\leftl(\rho\restfp\rightr)
      =
      X
    }
    =
    C q^{3 X} X  + o(q^{3X} X).
  \]
\end{theorem}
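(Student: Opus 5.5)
By \Cref{thm:intro-local-global}, the left-hand side equals $8$ times the coefficient of $t^X$ in the formal product
\[
  F(t) := \prod_{\fp \in \cP_K} \Bigl( \sum_{v \geq 0} a_{\fp,v}\, t^{v \deg \fp} \Bigr).
\]
To see this, sum the identity of \Cref{thm:intro-local-global} over all tuples $(v_\fp)$ with $\sum_\fp \deg\fp \cdot v_\fp = X$. The local factors $\frac18\card{\cdots}$ are exactly the numbers $a_{\fp,v}$, and $\lastjump$ takes only integer values for $D_4$ by the remark following \Cref{prop:minembed-D4}. The plan is then to feed $F$ into the analytic lemma \cite[Lemma~5.4]{wildcount}. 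That lemma (as used in \cite{wildcount}) gives an asymptotic $C q^{aX} X^{b-1}$ for the coefficients of an Euler product whose local factors at degree-$d$ primes behave like $1 + b\, q^{ad} t^{d} + (\text{smaller terms})$, provided the higher terms are dominated suitably. Here the parameters will be $a=3$ and $b=2$.

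\textbf{Step 1: identify the dominant term.} Using the estimates of the preceding lemma with $\card{\kappa_\fp} = q^{\deg\fp}$, put $u := t^{\deg \fp}$. The local factor is
\[
  1 + \bigl(2 q^{2\deg\fp} + O(q^{\deg\fp})\bigr) u + \sum_{v\ge2} O\bigl(v\, q^{(v+1)\deg\fp}\bigr) u^v .
\]
Substituting $t = q^{-3}s$ turns the linear term into roughly $2 q^{-\deg\fp}s^{\deg\fp}$. Since the number of primes of degree $d$ is about $q^d/d$, the product over linear terms behaves like $\zeta_K(\cdot)^2$, which has a double pole at $s=1$, i.e.\ at $t=q^{-3}$. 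This double pole produces the main term $C q^{3X} X$.

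\textbf{Step 2: control the higher terms.} After the substitution, the $v$-th term is $O\bigl(v\, q^{(v+1-3v)\deg\fp}\bigr) = O\bigl(v\, q^{(1-2v)\deg\fp}\bigr)$. For $v \geq 2$ this is at most $q^{-3\deg\fp}$, up to the factor $v$ and a slight extension of the radius. Summed over primes, this is absolutely convergent in a disc $\card{t} < q^{-3+\eta}$ for some small $\eta>0$. Hence $F(t)\,(1-q^3t)^2$ extends holomorphically to a slightly larger disc, and $F(t)\,\zeta_K(q^{-3}\cdot)^{-2}$ is holomorphic and nonvanishing at $t=q^{-3}$. I expect verifying the precise hypotheses of \cite[Lemma~5.4]{wildcount} to be the main bookkeeping step.

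\textbf{Step 3: handle the other poles and the constant.} The remaining points with $\card t = q^{-3}$ also need attention. For $K=\F_q(T)$ the zeta function $Z(t) = \frac{1}{(1-t)(1-qt)}$ has no other poles of that modulus, so after the substitution the only singularity on the circle is at $t = q^{-3}$. It remains to check that $C>0$. This holds because the Euler product converges to a positive value there: all $a_{\fp,v}\ge0$ and $a_{\fp,1}>0$, so the leading coefficient is a positive product.

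The main obstacle is matching exactly the form of \cite[Lemma~5.4]{wildcount}. Specifically, one needs the coefficient of $u$ in the form $2\card{\kappa_\fp}^2 + O(\card{\kappa_\fp})$, together with a uniform bound $O(v\card{\kappa_\fp}^{v+1})$ for the higher coefficients, and $v+1 < 3v$ must hold for $v\ge2$. These are exactly the estimates in the preceding lemma, so the application should go through directly.
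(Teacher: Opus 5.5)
Your proposal is correct and follows the paper's proof. You rewrite the count via \Cref{thm:intro-local-global} as $8\sum_{(v_\fp)}\prod_\fp a_{\fp,v_\fp}$ and apply \cite[Lemma~5.4]{wildcount} with $A=3$, $B=2$, checking its hypothesis from $a_{\fp,1}=2\card{\kappa_\fp}^2+O(\card{\kappa_\fp})$ and $a_{\fp,v}=O(v\card{\kappa_\fp}^{v+1})$, using $v+1<3v$ for $v\geq 2$ (the paper absorbs the factor $v$ via $O(\card{\kappa_\fp}^{(1+\varepsilon)v+1})$, you by slightly enlarging the radius).

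Your extra sketch of the lemma's analytic content is sound: the double pole of $Z_K(q^2t)^2$ at $t=q^{-3}$, the absence of other singularities on $\card{t}=q^{-3}$ because $K$ is rational, and positivity at $q^{-3}$. Two small corrections: in your paraphrase of the lemma the linear term should read $b\,q^{(a-1)d}t^d$ rather than $b\,q^{ad}t^d$, and it is the normalized product $F(t)/Z_K(q^2t)^2$, not $F$ itself, that converges to a positive value at $t=q^{-3}$.
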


\begin{proof}
  Pick any $\varepsilon \in (0,1)$.
  Observe that we have $a_{\fp, v} = O(v|\kappa_\fp|^{v+1}) = O(|\kappa_\fp|^{(1+\varepsilon)v+1})$.
  Moreover, if $v \geq 2$, then
  \[
    \frac{(1+\varepsilon)v+1 + 1}v
    =
    1+\frac2v+\varepsilon
    \leq
    2+\varepsilon
    <
    3
    =\frac{2+1}1.
  \]
  Therefore, in the notation of \cite[Lemma~5.4]{wildcount}, we have $S = \{1\}$, $A = \frac{2+1}{1} = 3$, $B = 2$, $M = 1$, and the hypothesis ``(5.4)'' is satisfied.
  We have
  \begin{align*}
    &\frac18
    \cardsuchthat{
      \rho \in \Hom(\Gamma_K, D_4)
    }{
      \sum_{\fp \in \cP_K}
        \deg \fp \cdot \lastjump(\rho\restfp)
      =
      X
    }
    \\
    ={}&
    \sum_{\substack{
      (v_\fp) \in \prod_\fp \Z_{\geq 0} \\
      \sum_\fp \deg \fp \cdot v_\fp = X
    }}
      \frac18
      \cardsuchthat{
        \rho \in \Hom(\Gamma_K, D_4)
      }{
        \forall \fp, \lastjump(\rho\restfp)
        =
        v_\fp
      }
    \\
    ={}&
    \sum_{\substack{
      (v_\fp) \in \prod_\fp \Z_{\geq 0} \\
      \sum_\fp \deg \fp \cdot v_\fp = X
    }}
      \prod_\fp
        a_{\fp, v_\fp}
    \tag*{by \Cref{thm:intro-local-global}}
    \\
    ={}&
    C q^{3 X} X  + o(q^{3X} X)
    \tag*{by \cite[Lemma~5.4]{wildcount}}
  \end{align*}
  for some $1$-periodic function $C \colon \Q/\Z \to \R_{\geq 0}$ with $C(0) > 0$.
  Restricting to integers~$X$ (the only possibility), $C$ is thus a positive real constant.
\end{proof}

\section{On (the failure of) generalizations of \Cref*{thm:intro-local-global}}
\label{sn:generalizations}

In this section, we discuss the possibility of generalizing \Cref{thm:intro-local-global}.
In \Cref{thm:single-prime}, we show that, for extensions ramified at a single prime of degree $\notin p\N$, there is a local--global bijection valid for all finite $p$-groups, without reference to any specific inertial height.
In \Cref{prop:counterexample-discr}, we give a counterexample to the naive generalization of \Cref{thm:intro-local-global} to discriminants.
We leave for future work the study of statements in the spirit of \Cref{thm:intro-local-global} for groups of higher nilpotency class and/or over non-rational base fields.

\medskip

Before diving in, we state a version of \Cref{lem:embpb-solvable} with restricted ramification, which we use in the proofs of \Cref{thm:single-prime} and of \Cref{prop:counterexample-discr}:

\begin{lemma}
  \label{unram-embpb}
  Let~$K=\F_q(T)$ be a rational function field of characteristic~$p$.
  Consider a short exact sequence $1 \to N \to G \overset\pi\to Q \to 1$, and assume that~$N \subseteq Z(G)$ is a finite abelian $p$-group.
  Let~$S$ be a set of primes of~$K$, and let $\bar\rho \in \Hom(\Gamma_K, Q)$ be unramified outside~$S$.
  Then, there exists a lift $\rho \in \Hom(\Gamma_K, G)$ of~$\bar\rho$ which is unramified outside~$S$, and all such lifts are exactly the homomorphisms of the form $\delta\cdot\rho$ where $\delta \in \Hom(\Gamma_K, N)$ is unramified outside~$S$.
\end{lemma}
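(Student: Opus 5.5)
We will prove \Cref{unram-embpb} by a local-to-global patching argument that combines \Cref{lem:embpb-solvable} with the local--global principle of \Cref{lem:cft}. The plan is to start from any global lift and correct it, prime by prime, by homomorphisms valued in~$N$.

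First, by \Cref{lem:embpb-solvable}, pick an arbitrary lift $\rho_0 \in \Hom(\Gamma_K, G)$ of~$\bar\rho$. It may be ramified at finitely many primes $\fp \notin S$. For each such~$\fp$, the restriction $\bar\rho\restfp$ is unramified, so by \Cref{eq:unram-lift} it admits an unramified lift $\rho_\fp \in \Hom_\ur(\Gamma_{K_\fp}, G)$. By the local version of \Cref{lem:embpb-solvable}, the two local lifts differ by a twist: $\rho_\fp = \delta_\fp \cdot \rho_0\restfp$ for a unique $\delta_\fp \in \Hom(\Gamma_{K_\fp}, N)$.

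Next, we glue these local corrections. By \Cref{lem:cft} applied to the abelian $p$-group~$N$, the element $([\delta_\fp])_{\fp} \in \bigoplus_\fp \cI(K_\fp, N)$ is the image of some $[\delta] \in \cI(K, N)$. Here we set $\delta_\fp := 0$ for all $\fp \in S$ and for all primes at which $\rho_0$ is already unramified. This gives a global $\delta \in \Hom(\Gamma_K, N)$ whose restriction at every prime~$\fp$ agrees with~$\delta_\fp$ up to an unramified homomorphism. Put $\rho := \delta \cdot \rho_0$, which is again a lift of~$\bar\rho$. At each prime $\fp \notin S$, the restriction to inertia of $\rho\restfp$ coincides with that of $\delta_\fp \cdot \rho_0\restfp$. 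Indeed, both $\delta$ and the twists are central and abelian, so only the restriction of $\delta\restfp$ to inertia matters. That restriction is either trivial or equal to that of~$\rho_\fp$, so in both cases $\rho$ is unramified at~$\fp$. The one point needing care is that restriction to $\Gamma_{K_\fp}$ depends on the chosen embedding only up to conjugation; this is harmless because $N$ is central and ramification is a conjugation-invariant condition (\Cref{subsn:notation}).

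Finally, for the description of all lifts, let $\rho'$ be another lift of~$\bar\rho$ that is unramified outside~$S$. By \Cref{lem:embpb-solvable}, $\rho' = \delta' \cdot \rho$ for a unique $\delta' \in \Hom(\Gamma_K, N)$. At a prime $\fp \notin S$, both $\rho$ and~$\rho'$ kill the inertia subgroup. Hence $\delta' = \rho' \rho^{-1}$ is trivial on inertia, so $\delta'$ is unramified outside~$S$. Conversely, if $\delta'$ is unramified outside~$S$, then $\delta' \cdot \rho$ is trivial on inertia at every $\fp \notin S$. I expect the only genuine obstacle to be the gluing step, which is exactly what \Cref{lem:cft} supplies. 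Everything else is formal once one notes that central twists act on inertia restrictions pointwise.
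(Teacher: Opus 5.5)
Your proposal is correct and follows essentially the same argument as the paper. You start from an arbitrary lift given by \Cref{lem:embpb-solvable}, twist it locally at each prime outside~$S$ into an unramified lift, glue these local twists up to unramified homomorphisms via \Cref{lem:cft}, and then characterize the remaining lifts by twisting. You are slightly more explicit than the paper in checking that the glued tuple of inertial types lies in the direct sum, by setting the local corrections to zero at primes in~$S$ and at primes where~$\rho_0$ is already unramified.
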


\begin{proof}
  By \Cref{lem:embpb-solvable}, we can pick a lift $\rho_0\in\Hom(\Gamma_K,G)$ of~$\bar\rho$.
  The restriction $\bar\rho|_{\Gamma_{K_\fp}} \in\Hom(\Gamma_{K_\fp},Q)$ is by assumption unramified for every $\fp\notin S$, so by \Cref{eq:unram-lift} it has an unramified lift to $G$, which must be the twist $\varepsilon_\fp\cdot\rho_0|_{\Gamma_{K_\fp}} \in \Hom_\ur(\Gamma_{K_\fp},G)$ of $\rho_0|_{\Gamma_{K_\fp}}$ by some $\varepsilon_\fp\in\Hom(\Gamma_{K_\fp},N)$.
  By the local-global principle for abelian extensions (\Cref{lem:cft}), there is some $\varepsilon\in\Hom(\Gamma_K,N)$ such that $\varepsilon|_{\Gamma_{K_\fp}}\cdot\varepsilon_\fp^{-1}\in\Hom(\Gamma_{K_\fp},N)$ is unramified for all $\fp\notin S$.
  Then, the lift $\rho := \varepsilon\cdot\rho_0 \in \Hom(\Gamma_K, G)$ of $\bar\rho$ is unramified outside $S$.
  
  By \Cref{lem:embpb-solvable}, all other lifts of $\bar\rho$ are of the form $\delta\cdot\rho$ with $\delta\in\Hom(\Gamma_K, N)$. Clearly, such a lift is unramified outside $S$ if and only if $\delta$ is.
\end{proof}

\subsection{Extensions ramified at a single prime of degree not divisible by~$p$}

Let~$K = \F_q(T)$, and let~$\fp$ be a prime of~$K$ of degree not divisible by~$p$.
In \Cref{thm:single-prime}, we establish a local--global principle for finite $p$-extensions of~$K$ unramified outside~$\fp$ of arbitrary nilpotency class.
This means that the case ``$v_\fq = 0$ whenever $\fq\neq\fp$'' of \Cref{thm:intro-local-global} is much easier to prove and valid in much more generality.
The reason is that in this case we do not have to deal with the indeterminacy in \Cref{lem:cft}: by adding a global unramified homomorphism, we can always prescribe the unramified behaviour at~$\fp$ of any global extension exactly.

\begin{theorem}
  \label{thm:single-prime}
  Let~$G$ be any finite $p$-group.
  Then, the restriction map
  \[
    \suchthat{\rho \in \Hom(\Gamma_K, G)}{\rho \textnormal{ unramified outside } \fp}
    \longrightarrow
    \Hom(\Gamma_{K_\fp}, G)
  \]
  is a bijection.
\end{theorem}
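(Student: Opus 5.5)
We argue by induction on $\card G$. The trivial group is immediate. For nontrivial $G$, choose a central subgroup $N \subseteq Z(G)$ of order~$p$ and put $Q := G/N$, so we have an exact sequence $1 \to N \to G \overset\pi\to Q \to 1$ with $N \simeq \F_p$ central. By induction, the restriction map is a bijection for~$Q$. We first prove the case $G = N$ abelian, and then deduce the general case by a fiberwise argument.

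\emph{Abelian case.} Let $G$ be an abelian $p$-group. By \Cref{lem:cft}, a homomorphism $\rho \in \Hom(\Gamma_K, G)$ unramified outside~$\fp$ is determined up to $\Hom_\ur(\Gamma_K,G)$ by its local inertial type at~$\fp$, and every inertial type in $\cI(K_\fp,G)$ arises. So the map $\{\rho \text{ unramified outside } \fp\}/\Hom_\ur(\Gamma_K,G) \to \cI(K_\fp,G)$ is bijective. It remains to see that restriction induces a bijection $\Hom_\ur(\Gamma_K,G) \to \Hom_\ur(\Gamma_{K_\fp},G)$. By \Cref{inclusion-frobenius}, under the identifications with $\Hom(\hat\Z,G) \simeq G$, this map is $g \mapsto (\deg\fp)\, g$. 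Since $p \nmid \deg\fp$ and $G$ is a $p$-group, this map is bijective. The two bijections are compatible (restriction of an unramified twist is an unramified twist), so the full map is a bijection.

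\emph{Inductive step.} Take $\rho_\fp \in \Hom(\Gamma_{K_\fp},G)$ and set $\bar\rho_\fp := \pi\circ\rho_\fp$. By induction there is a unique $\bar\rho \in \Hom(\Gamma_K,Q)$, unramified outside~$\fp$, with $\bar\rho\restfp = \bar\rho_\fp$. Because restriction commutes with composition by~$\pi$, it suffices to show that, for each such~$\bar\rho$, restriction is a bijection from the lifts of~$\bar\rho$ unramified outside~$\fp$ to the lifts of $\bar\rho\restfp$.

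By \Cref{unram-embpb} (with $S = \{\fp\}$), there is a lift~$\rho$ of~$\bar\rho$ unramified outside~$\fp$. The lifts unramified outside~$\fp$ then form a torsor under $\{\delta \in \Hom(\Gamma_K,N) \text{ unramified outside } \fp\}$. By \Cref{lem:embpb-solvable}, the local lifts of $\bar\rho\restfp$ form a torsor under $\Hom(\Gamma_{K_\fp},N)$. Restriction is equivariant with respect to $\delta \mapsto \delta\restfp$, which is a bijection by the abelian case applied to~$N$. An equivariant map between torsors over a bijective group homomorphism is itself bijective, which gives the fiberwise bijection and concludes the induction.

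The only real subtlety, and the place where $p\nmid\deg\fp$ enters, is the abelian case. One must ensure that the unramified part is matched exactly and not just up to twist. This is precisely the multiplication-by-$\deg\fp$ computation from \Cref{inclusion-frobenius}. One should also check that \Cref{lem:cft} really does give surjectivity onto local inertial types while keeping the other primes unramified; this follows from the direct-sum description $\cI(K,G)\simeq\bigoplus_\fq \cI(K_\fq,G)$.
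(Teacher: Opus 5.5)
Your proof is correct and follows essentially the same route as the paper: induction on $\card G$ via a central subgroup $N\simeq\F_p$, reduction to a single fiber using \Cref{unram-embpb}, and the abelian case obtained from \Cref{lem:cft} modulo unramified homomorphisms plus \Cref{inclusion-frobenius} with $p\nmid\deg\fp$. The differences are only cosmetic: you prove the abelian case for all abelian $p$-groups rather than just $N\simeq\F_p$, and you write out the torsor and five-lemma bookkeeping that the paper leaves implicit.
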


\begin{proof}
  We prove this by induction on $\card G$.
  The case $\card G = 1$ is trivial, so we assume that $G \neq 1$ and that the result holds for $p$-groups of size $< \card G$.
  The non-trivial $p$-group~$G$ has a non-trivial center, so pick a central subgroup $N \subseteq Z(G)$ with $N \simeq \F_p$.
  By the induction hypothesis, the result holds for $G/N$, so we can reason in a given fiber: we fix a $\bar\rho \in \Hom(\Gamma_K, G/N)$ unramified outside~$\fp$, and we have to show
  \begin{equation}
    \label{fiberwise-locglob}
    \suchthat{\rho \in \Hom(\Gamma_K, G)}{\rho\bmod N=\bar\rho\\\rho\textnormal{ unramified outside }\fp}
    \simeq
    \suchthat{\rho \in \Hom(\Gamma_{K_\fp}, G)}{\rho\bmod N=\bar\rho\restfp}.
  \end{equation}
  Use \Cref{unram-embpb} to pick a lift $\rho_0 \in \Hom(\Gamma_K, G)$ of~$\bar\rho$ which is unramified outside~$\fp$.
  The lifts of~$\bar\rho$ that are unramified outside~$\fp$ are exactly the ~$\delta\cdot\rho_0$ with $\delta \in \Hom(\Gamma_K, N)$ unramified outside~$\fp$.
  Similarly, the lifts of~$\bar\rho\restfp$ are of the form~$\delta\cdot\rho_0\restfp$ with $\delta \in \Hom(\Gamma_{K_\fp}, N)$.
  Hence, \Cref{fiberwise-locglob} reduces to proving
  \[
    \suchthat{\delta \in \Hom(\Gamma_K, N)}{\delta\textnormal{ unramified outside }\fp}
    \simeq
    \Hom(\Gamma_{K_\fp}, N).
  \]
  Recall that $N \simeq \F_p$.
  By \Cref{lem:cft}, we know this bijection modulo unramified homomorphisms.
  Hence, it is enough to prove that the restriction map $\Hom_\ur(\Gamma_K, \F_p) \simto \Hom_\ur(\Gamma_{K_\fp}, \F_p)$ is a bijection.
  We have $\Gal(K^\ur|K) \simeq \hat\Z$ and, for the same identification, $\Gal(K_\fp^\ur|K_\fp) \simeq \deg\fp \cdot \hat\Z$ (cf.~\Cref{inclusion-frobenius}).
  Recall that $\deg\fp$ is coprime to~$p$ (i.e., invertible in~$\F_p$), so indeed the restriction map $\Hom(\hat\Z, \F_p) \to \Hom(\deg\fp\cdot\hat\Z,\F_p)$ is bijective.
\end{proof}

\begin{remark}
  Another way to state \Cref{thm:single-prime} is that the injection $\Gamma_{K_\fp} \hookrightarrow \Gamma_K$ induces an isomorphism between the Galois group of the maximal $p$-extension of~$K_\fp$
  and the Galois group of the maximal $p$-extension of~$K$ unramified outside~$\fp$.
  The latter group is pro-$p$, hence pro-nilpotent, so that isomorphism can also be established by checking the surjectivity at the level of their abelianizations (cf.~\cite[Lemma~5.9]{cgt}), which follows via class field theory from the surjectivity of the map $\Z_p \to \Z_p$, $x \mapsto \deg\fp \cdot x$.
  This yields an alternative proof.
\end{remark}

\subsection{A counterexample for the discriminant exponent}

Let $K = \F_q(T)$ be a rational function field of characteristic $p\neq2$.
In \Cref{prop:counterexample-discr}, we give a counterexample to the naive generalization of \Cref{thm:intro-local-global} in which the last jump is replaced with the discriminant exponent~$\delta$, defined in~\Cref{eq:def-discr-exp}.

Observe that the local--global principle (for discriminant exponents, or for any inertial height) does hold for abelian $p$-extensions by \Cref{lem:cft}, and for $p$-extensions ramified at a single prime of degree coprime to~$p$ by \Cref{thm:single-prime}.
Therefore, in order to construct a ``minimal'' counterexample, we pick a prime~$\fp \in \cP_K$ of degree~$p$, and we consider the smallest possible non-abelian $p$-group $G := H_3(\F_p) = \left(\begin{smallmatrix}1&\F_p&\F_p\\&1&\F_p\\&&1\end{smallmatrix}\right)$.
We will witness a failure of the local--global principle for the elements of $\Hom(\Gamma_K, G)$ that are unramified outside~$\fp$.

The smallest possible local discriminant exponent at~$\fp$ leading to a ramified extension is
\[
  2p^2(p-1) = p^3 \cdot \int_{-1}^1 \leftl(1-\frac1p\rightr) \d v,
\]
for which the only possibility is that the inertia subgroup has size~$p$, and that the last jump is~$1$.
We shall see that the number of global $G$-extensions of~$K$ unramified outside~$\fp$ with that behavior at~$\fp$ is in fact always larger than the number of local $G$-extensions of~$K_\fp$ with that behavior, contradicting the analogue of \Cref{thm:intro-local-global} with last jump replaced by discriminant exponent.

The reason for this difference is that the Frobenius elements are very constrained in the local case (they must lie in the normalizer of the inertia subgroup, which has size~$p^2$ for all possible inertia subgroups of size~$p$ except~$Z(G)$), whereas in the global case the Frobenius element at~$\fp$ is automatically trivial by \Cref{inclusion-frobenius} as $\deg\fp = p$ and~$G$ has exponent~$p$, so there is no constraint and we can pick the Frobenius element freely in~$G$ (giving~$p^3$ choices).

\medskip

We let $N := Z(G) = \left(\begin{smallmatrix}1&0&\F_p\\&1&0\\&&1\end{smallmatrix}\right)$ and $Q = G/N \simeq \F_p^2$.
We denote as usual by $\pi \colon G \twoheadrightarrow Q$ the canonical surjection (reading the two coefficients just above the diagonal).

\begin{proposition}
  \label{prop:counterexample-discr}
  Let $K = \F_q(T)$ be a rational function field of characteristic $p\neq2$, let $\fp$ be a prime of degree~$p$ of~$K$, and let $G = H_3(\F_p)$.
  On the local side, we have
  \[
    \cardsuchthat{
      \rho \in \Hom(\Gamma_{K_\fp}, G)
    }{
      \delta(\rho) = 2p^2(p-1)
    }
    =
    p^3(p+2)(q^p-1)
  \]
  whereas, on the global side,
  \[
    \cardsuchthat{
      \rho \in \Hom(\Gamma_K, G)
    }{
      \delta(\rho\restfp) = 2p^2(p-1) \\
      \rho \textnormal{ unramified outside } \fp
    }
    =
    p^3(p^2+p+1)(q^p-1).
  \]
\end{proposition}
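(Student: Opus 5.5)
The plan is to count both sides by first fixing the restriction of $\rho$ to the inertia group $\Gamma_{K_\fp}^0$, and then counting how many homomorphisms share that restriction, locally and globally. Both sides begin with the same analysis of the condition $\delta(\rho)=2p^2(p-1)$. By \Cref{eq:def-discr-exp}, and as already observed before the statement, this condition holds exactly when $I:=\rho(\Gamma_{K_\fp}^0)$ has order~$p$, $\rho(\Gamma_{K_\fp}^v)=I$ for $v\le 1$, and $\rho(\Gamma_{K_\fp}^v)=1$ for $v>1$. Since $I\simeq\F_p$ is abelian, the restriction $\rho|_{\Gamma^0_{K_\fp}}$ factors through the abelianized inertia group. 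I will parametrize it by Artin--Schreier theory (\Cref{rk:elemab}, \Cref{prop:local-fundom}) over the residue field $\kappa_\fp=\F_{q^p}$, working in an abelian subgroup $A\supseteq I$ of $G$. The inertial type with last jump~$1$ and inertia image exactly~$I$ corresponds to a coefficient $m_1\in (I\otimes\F_{q^p})\setminus\{0\}$. This gives $q^p-1$ choices for each $I$, so there are $(p^2+p+1)(q^p-1)$ possible restrictions to inertia in total, since $H_3(\F_p)$ has $p^2+p+1$ subgroups of order~$p$.

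\emph{Local side.} Given the restriction $\phi$ to inertia, $\rho$ is determined by the image $f$ of a Frobenius lift. The element $f$ must normalize $I$, and conjugation by $f$ induces an automorphism of $I\simeq\F_p$ of $p$-power order, hence the trivial one; so $f$ must centralize $I$. Conversely, any such $f$ gives a homomorphism, because the image $I\langle f\rangle$ is abelian and we can apply \Cref{prop:local-fundom} inside an abelian group $A\supseteq I\langle f\rangle$, with the unramified part free. For $I=N$ this gives $p^3$ choices of $f$, and for each of the other $p^2+p$ subgroups it gives $|C_G(I)|=p^2$ choices. The total is
\[(q^p-1)\bigl(p^3+(p^2+p)p^2\bigr)=p^3(p+2)(q^p-1).\]

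\emph{Global side.} I will show that, for each $\phi$ as above and each $g\in G$, there is exactly one $\rho\in\Hom(\Gamma_K,G)$ which is unramified outside $\fp$, restricts to $\phi$ on $\Gamma_{K_\fp}^0$, and sends a fixed lift of the global Frobenius generator of $\Gal(K^\ur|K)$ to~$g$. That gives the count $p^3(p^2+p+1)(q^p-1)$.

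I will prove this by induction on a central series, imitating \Cref{thm:single-prime}: handle $Q$ first, then lift through $N\simeq\F_p$ using \Cref{unram-embpb} and the twisting description. At each abelian step, \Cref{lem:cft} gives a bijection between global inertial types unramified outside $\fp$ and local inertial types at $\fp$. The remaining freedom is $\Hom_\ur(\Gamma_K,\F_p)\simeq\F_p$, which is detected by the image of the global Frobenius. By \Cref{inclusion-frobenius} it restricts to zero on $\Gamma_{K_\fp}$, because $\deg\fp=p$. So this freedom does not affect $\phi$, and it is exactly parametrized by the Frobenius image. The difference from the local side is therefore that no centralizer condition appears: the local Frobenius maps to $\sigma^p$ in $\Gal(K^\ur|K)$, and its image in~$G$ is forced by $\phi$ together with $g$ rather than being free.

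The main obstacle is making the global bijection rigorous in the non-abelian setting, that is, checking that ``restriction to inertia at $\fp$ together with the image of the global Frobenius'' really is a bijection onto all pairs $(\phi,g)$. The inductive twisting argument has to be run on the fibres over $Q$. On each fibre I must verify that the $N$-twists unramified outside $\fp$ act simply transitively on the compatible data, meaning one twist for each inertial type and $p$ for the unramified part. I would also double-check that every $\phi$ arises from some $\bar\rho\in\Hom(\Gamma_K,Q)$: here \Cref{lem:cft} applied to $Q$ gives the $Q$-inertial type, and the lift of $\phi$ itself is obtained by adjusting with an $N$-inertial type.
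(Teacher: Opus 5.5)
Your local count is correct, and it takes a slightly different route from the paper. The paper fibres over $\Hom(\Gamma_{K_\fp},Q)$ and applies \Cref{lem:count-elemab-lj1} with $A=N$ or $A=\pi^{-1}(L)$. You instead split off a local Frobenius lift $F_\fp$ and count its admissible images in $C_G(I)$, which makes explicit the centralizer constraint the paper only describes heuristically. You should add one remark: $f\in C_G(I)$ makes $\phi$ invariant under $F_\fp$, so every $\phi$ is the restriction of an element of $\Hom(\Gamma_{K_\fp},I)$, and your $m_1$-parametrization covers all of them.

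The global side, however, has a genuine gap exactly at the step you call the main obstacle, and the justification you sketch does not work. Injectivity is fine. For existence, let $\rho_0$ be a global lift of $\bar\rho$ and let $\phi=\rho_\phi|_{\Gamma^0_{K_\fp}}$ for a local $\rho_\phi$ with $\pi\circ\phi=\bar\phi$. Then $\eta:=\phi\cdot(\rho_0|_{\Gamma^0_{K_\fp}})^{-1}$ is merely a homomorphism $\Gamma^0_{K_\fp}\to N$. It is the restriction of an element of $\Hom(\Gamma_{K_\fp},N)$ (your ``$N$-inertial type'') only if it is $F_\fp$-invariant. One computes $\eta(F_\fp xF_\fp^{-1})\,\eta(x)^{-1}=[\bar h,\bar\phi(x)]$ with $\bar h:=\pi\bigl(\rho_0(F_\fp)^{-1}\rho_\phi(F_\fp)\bigr)$, so invariance fails in general. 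Equivalently, the value $\rho(F_\fp)$ that you call ``forced by $\phi$ together with $g$'' must lie in $C_G(I)$, and nothing in your induction checks this.

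Your argument uses nothing specific to the relevant $\phi$, so it would equally show that every local restriction to inertia occurs globally with every $g$. That is false. Take $\bar\rho_\fp\in\Hom(\Gamma_{K_\fp},Q)$ surjective with $\bar\rho_\fp(\Gamma^0_{K_\fp})=L$ a line. For any lift $\rho_\fp$, the restriction $\rho_\fp|_{\Gamma^0_{K_\fp}}$ extends only to surjective local homomorphisms. But every global $\rho$ unramified outside $\fp$ with that $Q$-inertial type at $\fp$ satisfies $\rho(\Gamma_{K_\fp})\subseteq\pi^{-1}(L)$, which is abelian.

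The missing idea is the one in the paper's proof. Suppose $\bar\rho\in\Hom(\Gamma_K,Q)$ is unramified outside $\fp$ with $\bar\rho(\Gamma^0_{K_\fp})=L$. Then $\bar\rho\bmod L$ is unramified everywhere. By \Cref{inclusion-frobenius}, $\deg\fp=p$, and the fact that $Q/L$ has exponent~$p$, it vanishes on $\Gamma_{K_\fp}$, so $\bar\rho(\Gamma_{K_\fp})=L$. Let $s\colon L\to I$ be the inverse of $\pi|_I$. Then $s\circ\bar\rho\restfp$ is a local lift of $\bar\rho\restfp$ whose restriction to inertia is $s\circ\bar\phi$, the only homomorphism with image $I$ lifting $\bar\phi$. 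This provides the required $N$-inertial type and shows it is unique, giving exactly $p$ lifts with inertia image $I$ above each such $\bar\rho$. Your bijection with the pairs $(\phi,g)$ then follows. You did note that global unramified twists restrict to $0$ on $\Gamma_{K_\fp}$, but you used this only to see that they do not move $\phi$, not to pin down $\bar\rho\restfp$, which is what existence needs. For $I=N$ there is no issue, since then $\bar\phi=0$.
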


\begin{proof}
  As we have seen, the constraint on the discriminant at~$\fp$ means (both locally and globally) that $|{\rho(\Gamma_{K_\fp}^0)}| = p$, and that the last jump at~$\fp$ must be~$1$.
  
  We first count (simultaneously in the local and the global case) the number of~$\rho$ (unramified outside~$\fp$) for which $\rho(\Gamma_{K_\fp}^0) = N$, with last jump~$1$ (at~$\fp$).
  The corresponding $Q$-extension $\bar\rho := \pi\circ\rho$ must be unramified.
  Fix an unramified $Q$-extension~$\bar\rho$ (by \Cref{eq:unram-ext}, there are $\card Q = p^2$ choices for~$\bar\rho$, both locally and globally), and pick an unramified lift of~$\bar\rho$ (\Cref{prop:trivial-bound}).
  Via twisting, the number of choices for~$\rho$ in the fiber above~$\bar\rho$ is seen to equal the number of $N$-extensions with last jump~$1$ (at~$\fp$, unramified outside~$\fp$), which is $p (q^p-1)$ by \Cref{lem:count-elemab-lj1} below (with $A=B=N$).
  Therefore, both locally and globally, the count for the case $\rho(\Gamma_{K_\fp}^0) = N$ is~$p^3 (q^p-1)$.

  The other possibility is that $I := \rho(\Gamma_{K_\fp}^0)$ is some other subgroup of order~$p$ of~$G$.
  Then,~$L := \pi(I)$ is a line in $Q \simeq \F_p^2$ (it is non-trivial because $I \neq N$, and it has order $\leq \card I= p$).
  We fix such a line~$L \subseteq \F_p^2$ (there are $p+1$ choices for~$L$).
  The number of subgroups $I \subseteq G$ of order~$p$ such that $\pi(I) = L$ is~$p$ (the subgroup $\pi^{-1}(L)$ has order~$p^2$ since $\ker\pi=N$ has size~$p$, so $\pi^{-1}(L) \simeq \F_p^2$ and we are again picking a line in~$\F_p^2$, but that line must not be~$N$).
  We fix such a subgroup $I$, and we count the $G$-extensions~$\rho$ (unramified outside~$\fp$) with last jump~$1$ (at~$\fp$) for which $\rho(\Gamma_{K_\fp}^0) = I$.
  The claim will follow if we show that there are~$p^2(q^p-1)$ such~$\rho$ locally, and~$p^3(q^p-1)$ such~$\rho$ globally.
  
  Any~$\rho$ satisfying $\rho(\Gamma_{K_\fp}^0) = I$ must satisfy $\rho(\Gamma_{K_\fp}) \subseteq \pi^{-1}(L)$ (otherwise, $\pi(\rho(\Gamma_{K_\fp})) = Q$ and then $\rho(\Gamma_{K_\fp}) = G$, contradicting the fact that the inertia subgroup~$I$ is a normal subgroup).
  Therefore, counting the possible local~$\rho \in \Hom(\Gamma_{K_\fp}, G)$ actually means counting the~$\rho \in \Hom(\Gamma_{K_\fp}, \pi^{-1}(L))$ with $\rho(\Gamma_{K_\fp}^0) = I$ and $\lastjump \rho  = 1$.
  (Recall that $\pi^{-1}(L) \simeq \F_p^2$.)
  By \Cref{lem:count-elemab-lj1} below (with $A = \pi^{-1}(L)$, $B = I$), there are $p^2(q^p-1)$ such~$\rho$.

  We now count the global~$\rho \in \Hom(\Gamma_K, G)$, unramified outside~$\fp$, with last jump~$1$ at~$\fp$, satisfying $\rho(\Gamma_{K_\fp}^0) = I$.
  By \Cref{lem:count-elemab-lj1} below (with $A=Q$, $B=L$), there are~$p^2(q^p-1)$ homomorphisms $\bar\rho \in \Hom(\Gamma_K, Q)$ unramified outside~$\fp$ with $\bar\rho(\Gamma_{K_\fp}^0) = L$ and with last jump~$1$ at~$\fp$.
  Fix one such~$\bar\rho$, and use \Cref{unram-embpb} to lift~$\bar\rho$ into a $\rho_0 \in \Hom(\Gamma_K, G)$ unramified outside~$\fp$.
  Our goal is to show that there are exactly~$p$ homomorphisms $\delta \in \Hom(\Gamma_K, N)$ unramified outside~$\fp$ with $\lastjump ((\delta\cdot\rho_0)\restfp) = 1$ and $(\delta \cdot \rho_0)(\Gamma_{K_\fp}^0) = I$, i.e., that there is a unique inertial type of such~$\delta$ (we have $\card N = p$).
  By \Cref{lem:cft}, it suffices to count the inertial types of~$\delta$ locally at~$\fp$.
  The map $\bar\rho \bmod L \colon \Gamma_K \to Q/L$ is unramified (as $\bar\rho(\Gamma_{K_\fq}^0)$ is trivial for $\fq\neq\fp$ and is~$L$ for $\ell=\fp$), so factors through $\Gamma_K/\Gamma_{K^\ur} \simeq \hat\Z$.
  Similarly, the map $\bar\rho\restfp \bmod L$ factors through $\Gamma_{K_\fp}/\Gamma_{K_\fp^\ur} \simeq \deg\fp \cdot \hat\Z = p\hat\Z$ (cf.~\Cref{inclusion-frobenius})---the corresponding induced map is the restriction to~$p \hat\Z$ of the map $\hat\Z \to Q/L$ induced by~$\bar\rho \bmod L$, but~$Q/L$ has exponent~$p$, so $\bar\rho\restfp \bmod L = 0$.
  This proves that $\bar\rho(\Gamma_{K_\fp}) = L$.
  Now, $\pi$ induces an isomorphism $I \simto L$.
  Call its inverse~$s$.
  A lift~$\rho_\fp \in \Hom(\Gamma_{K_\fp}, G)$ of~$\bar\rho\restfp$ satisfies $\rho_\fp(\Gamma_{K_\fp}^0) = I$ if and only if it coincides with~$s \circ \bar\rho\restfp$ in restriction to~$\Gamma_{K_\fp}^0$---it then automatically has last jump equal to~$1$ as this is the last jump of~$\bar\rho$ at~$\fp$.
  This means that the only allowed local inertial type for $[\delta\restfp]$ is $[(s \circ \bar\rho\restfp) \cdot \rho_0^{-1}\restfp]$.
\end{proof}

\begin{lemma}
  \label{lem:count-elemab-lj1}
  Let~$A \simeq \F_p^r$, and let $B \subseteq A$ be a subgroup of~$A$ of order~$p$.
  Then
  \begin{align*}
    \cardsuchthat{\rho \in \Hom(\Gamma_K, A)}{\rho(\Gamma_{K_\fp}^0) = B \\ \rho {\textnormal{ unramified outside } \fp} \\ \lastjump \rho = 1}
    &=
    \cardsuchthat{\rho \in \Hom(\Gamma_{K_\fp}, A)}{\rho(\Gamma_{K_\fp}^0) = B \\ \lastjump (\rho\restfp) = 1}
    \\&=
    p^r (q^p - 1).
  \end{align*}
\end{lemma}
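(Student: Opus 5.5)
The first equality follows from \Cref{thm:single-prime}: $A$ is a finite $p$-group and $\deg\fp = p$ is \emph{not} coprime to $p$, so that theorem does not apply directly. I will instead argue via \Cref{lem:cft} and then handle the unramified parts by hand.

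\textbf{Local count.} Write $A = B \oplus C$ with $C \simeq \F_p^{r-1}$. Any local $\rho$ with $\rho(\Gamma_{K_\fp}^0) = B$ is, modulo $B$, unramified. Its inertial type is an element of $\cI(K_\fp, A) \simeq \cI(K_\fp,\F_p)^r$ whose components outside $B$ vanish. By \Cref{rk:elemab}, a homomorphism $\Gamma_{K_\fp}\to\F_p$ with last jump exactly $1$ and nontrivial inertia has Artin--Schreier representative $m_1 T_\fp^{-1} + m_0$ with $m_1 \in \kappa_\fp^\times$. That gives $q^p-1$ inertial types with values in the line $B$: fix a generator $b$ of $B$, then the inertial type is $m_1 T_\fp^{-1}\otimes b$. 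Note that the image of inertia is then all of $B$, since the image is an $\F_p$-subspace containing nonzero elements. Each inertial type contains exactly $\card A = p^r$ homomorphisms by \Cref{eq:unram-ext}, and all of them have the same inertia image and last jump. So the local count is $p^r(q^p-1)$.

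\textbf{Global count.} The global $\rho$ that are unramified outside $\fp$ and have the prescribed local inertial type at $\fp$ form, by \Cref{lem:cft}, exactly one global inertial type in $\cI(K,A)$. That inertial type is the element of $\bigoplus_\fq \cI(K_\fq,A)$ supported at $\fp$ with the given component. Its coset contains $\card{\Hom_\ur(\Gamma_K,A)} = p^r$ homomorphisms. The conditions $\rho(\Gamma_{K_\fp}^0)=B$ and $\lastjump(\rho\restfp)=1$ depend only on the restriction to inertia at $\fp$, i.e. on the local inertial type. So the global set is in bijection with $q^p-1$ inertial types times $p^r$, giving $p^r(q^p-1)$.

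The main point needing care is the local description: that the inertia image of $\rho$ depends only on its inertial type, and equals $B$ exactly when the $T_\fp^{-1}$-coefficient lies in $\kappa_\fp^\times\otimes b$. This follows from \Cref{prop:local-fundom} applied componentwise, where $T_\fp$ is a uniformizer of $K_\fp \simeq \kappa_\fp\ppar{T_\fp}$ and the residue field $\kappa_\fp$ has $q^p$ elements.
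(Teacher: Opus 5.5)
Your proof is correct and takes essentially the same approach as the paper. You use \Cref{lem:cft} together with the fact that every inertial type contains $\card A$ homomorphisms to reduce to counting local inertial types, which by \Cref{rk:elemab} are the $m_1 \in (B\otimes\F_{q^p})\setminus\{0\}$. You should, however, delete your opening sentence asserting that the first equality follows from \Cref{thm:single-prime}: as you note yourself, that theorem does not apply when $\deg\fp = p$, and your argument never uses it.
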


\begin{proof}
  Both locally and globally, each inertial type has size~$\card A$, and at the level of inertial types we have the local--global bijection of \Cref{lem:cft}, so it suffices to show that there are~$q^p - 1$ local inertial types.
  By \Cref{rk:elemab}, the inertial types $[\rho] \in \cI(K_\fp, A)$ with last jump~$1$ correspond to elements $m_1 \in (A \otimes \F_{q^p}) \setminus \{0\}$.
  The condition $\rho(\Gamma_{K_\fp}^0) = B$ means that~$\rho \bmod B$ is unramified, i.e., $m_1 \bmod (B \otimes \F_{q^p}) = 0$, so the inertial types that we are counting are in fact parametrized by elements $m_1 \in (B \otimes \F_{q^p}) \setminus \{0\} \simeq \F_{q^p}^\times$, of which there are~$q^p-1$.
\end{proof}

\renewcommand{\addcontentsline}[3]{}
\emergencystretch=1em
\bibliographystyle{alphaurl}
\bibliography{refs.bib}

\end{document}